\newtheorem{thm}{Theorem}[section]
\newtheorem{lem}[thm]{Lemma}
\newtheorem{rem}[thm]{Remark}
\newtheorem{define}[thm]{Definition}
\numberwithin{equation}{section}
\title{Fractional Sensitivity Equation Method: \\ Applications to Fractional Model Construction \thanks{This work was supported by the AFOSR Young Investigator Program (YIP) award (FA9550-17-1-0150).}}
\author{
	Ehsan Kharazmi
	\footnote{D\lowercase{epartment of} C\lowercase{omputational} M\lowercase{athematics}, S\lowercase{cience}, \lowercase{and}, E\lowercase{ngineering} \& D\lowercase{epartment of} M\lowercase{echanical} E\lowercase{ngineering},	
		M\lowercase{ichigan} S\lowercase{tate} U\lowercase{niversity}, 428 S S\lowercase{haw} L\lowercase{ane}, E\lowercase{ast} L\lowercase{ansing}, MI 48824, USA}
	, Mohsen Zayernouri
	\footnote{D\lowercase{epartment of} C\lowercase{omputational} M\lowercase{athematics}, S\lowercase{cience}, \lowercase{and}, E\lowercase{ngineering} \&
		D\lowercase{epartment of} M\lowercase{echanical} E\lowercase{ngineering},	
		M\lowercase{ichigan} S\lowercase{tate} U\lowercase{niversity}, 428 S S\lowercase{haw} L\lowercase{ane}, E\lowercase{ast} L\lowercase{ansing}, MI 48824, USA,  C\lowercase{orresponding author; zayern@msu.edu}}
}
\begin{document}
\maketitle
\begin{abstract}
Fractional differential equations provide a tractable mathematical framework to describe anomalous behavior in complex physical systems, yet they introduce new sensitive model parameters, i.e. derivative orders, in addition to model coefficients. We formulate a sensitivity analysis of fractional models by developing a  \textit{fractional sensitivity equation method}. We obtain the adjoint fractional sensitivity equations, in which we present a fractional operator associated with \textit{logarithmic-power law} kernel. We further construct a gradient-based optimization algorithm to compute an accurate parameter estimation in fractional model construction. We develop a fast, stable, and convergent Petrov-Galerkin spectral method to numerically solve the coupled system of original fractional model and its corresponding adjoint fractional sensitivity equations. 

\begin{keywords}
sensitive fractional orders, model error, logarithmic-power law kernel, Petrov-Galerkin spectral method, iterative algorithm, parameter estimation.
\end{keywords}
\end{abstract}
%
%


\pagestyle{myheadings}
\thispagestyle{plain}

%
\section{Introduction}
\label{Sec: Introduction}
%
The experimental observations in a divers number of complex physical systems reveal ubiquitous anomalous behavior in the associated underlying processes \cite{West2003,west2016fractional}, where the anomaly manifests itself in skewness and sharpness of heavy tailed distributions. Fractional differential equations, which generalize their integer order counterparts, construct a rigorous mathematical framework to formulate models that flawlessly describe such anomalies. The excellence of fractional operator in accurate prediction of non-locality and memory effects is the inherent non-local nature of singular power-law kernel, whose order is defined as fractional derivative order, i.e. fractional index. Theses operators are being extensively used in analysis and design of models for a wide range of multi-scale multi-physics phenomena. Examples include viscoelastic materials and wave propagation  \cite{Mainardi2010,suzuki2016fractional,meral2010fractional}, non-Brownian transport phenomena in porous media and disordered materials \cite{meer01, meral2010fractional}, non-Newtonian complex fluids with multi-phase applications and rheology \cite{jaishankar2014,sreenivasan1997phenomenology, jha2003evidence, Castillo2004Plasma,jaishankar2013}, multi-scale patterns 
in biological tissue \cite{naghibolhosseini2015estimation,naghibolhosseini2018fractional,magin2010fractional,anastasio1994fractional,djordjevic2003fractional}, chaos/fractals and automatic control \cite{le1991fractal,duarte2002chaotic}. However, the key challenges of such models are the excessive computational cost in numerically integrating the convolution operation, and more importantly, introducing fractional derivative orders as extra model parameters, whose values are essentially obtained from experimental data. The sensitivity assessment of fractional models with respect to fractional indecis can build a bridge between experiments and mathematical models to gear observable data via proper optimization techniques, and thus, systematically improve the existing models in both analysis and design approaches. We formulate a mathematical framework by developing a \textit{fractional sensitivity equation method}, where we investigate the response sensitivity of fractional differential equations with respect to model parameters including derivative orders, and further construct an iterative algorithm in order to exploit the obtained sensitivity field in parameter estimation.

\vspace{0.1 in}
%
\noindent\textbf{Fractional Sensitivity Analysis}.
%
Sensitivity assessment approaches are commonly categorized as, finite difference, continuum and discrete derivatives, and computational or automatic differentiation, where the sensitivity coefficients are generally defined as partial derivative of corresponding functions (model output) with respect to design/analysis parameters of interest. Finite difference schemes use a first order Taylor series expansion to approximate the sensitivity coefficients, where accuracy depends strongly on step increment \cite{Martins00,Sobieski90}. Continuum and discrete derivative techniques however, differentiate the system response with respect to parameters, where the former, which is also known as sensitivity equation method (SEM, see \cite{liu2016two,zayernouri2011coherent} and references therein), directly computes the derivatives and obtain a set of (coupled) adjoint continuum sensitivity equations; while the latter performs differentiation after discretization of original equation \cite{stanford2010adjoint}. Automatic differentiation method also refers to a differentiation of the computer code \cite{Bischof96,Bischof97,Bischof03}. Fig.2 in \cite{van2005review} provides a descriptive schematic of these different approaches. We extend the continuum derivative technique to develop a \textit{fractional sensitivity equation method} (FSEM) in the context of fractional partial differential equations (FPDEs). To formulate the sensitivity analysis framework, we let $q$ be a set of model parameters including fractional indices and obtain the adjoint \textit{fractional sensitivity equations} (FSEs) by taking the partial derivative of FPDE with respect to $q$. These adjoint equations introduce a new fractional operator, associated with the \textit{logarithmic-power law} kernel, which to best of our knowledge has been presented for the first time here in the context of fractional sensitivity analysis. The key property of derived FSEs is that they preserve the structure of original FPDE. Thus, similar discretization scheme and forward solver can be readily applied with a minimal required changes.

\vspace{0.1 in}
%
\noindent\textbf{Model Construction: Estimation of Fractional Indices}.
%
Several numerical methods have been developed to solve inverse problem of model construction from available experimental observations or synthetic data. They typically convert the problem of model parameter estimation into an optimization problem, and then, formulate a suitable estimator by minimizing an objective function. These methods are stretched over but no limited to perturbation methods \cite{wei2010coupled}, weighted least squares approach \cite{chakraborty2009parameter,cho2015fractional,kelly2017fracfit}, nonlinear regression \cite{lim2014parameter}, and Levenberg-Marquardt method \cite{ghazizadeh2012inverse,chen2016fast,yu2016numerical,yu2017numerical}. We develop a bi-level FSEM-based parameter estimation method in order to construct fractional models, in a sense that the method obtains model coefficients in one level, and then searches for estimate of fractional indices in the next level. We formulate the optimization problem by defining objective functions as two types of model error that measures the difference in computed output/input of fractional model with true output/input in an $L^2$-norm sense. We further formulate a gradient-based minimizer, employing developed FSEM, and propose a two-stage search algorithm, namely, coarse grid searching and nearby solution. The first stage construct a crude manifold of model error over a coarse discretization of parameter space to locate a local neighborhood of minimum, and the second stage uses the gradient decent method in order to converge to the minimum point.

\vspace{0.1 in}
%
\noindent\textbf{Discretization Scheme}.
%
The iterative nature of parameter estimators instruct simulation of fractional model at each iteration step of model parameters. Therefore, one of the major tasks in computational model construction is to develop numerical methods that can efficiently discretize the physical domain and accurately solve the fractional model. The sensitivity framework additionally raise the complication by rendering coupled systems of FPDE and adjoint FSEs, and thus, demanding more versatile schemes. In addition to numerous finite difference methods \cite{Gorenflo2002, Sun2006,  Lin2007, wang2010direct, wang2011fast, Cao2013, zeng2015numerical, zayernouri2016_JCP_Frac_AB_AM}, recent works have elaborated efficient spectral schemes, for discretizing FPDEs in physical domain, see e.g., \cite{Rawashdeh2006, Lin2007, Khader2011, Khader2012, Li2009, Li2010, chen2014generalized, wang2015high, bhrawy2015spectral}. More recently, Zayernouri et al. \cite{Zayernouri2013, zayernouri2015tempered} developed two new spectral theories on fractional and tempered fractional Sturm-Liouville problems, and introduced explicit corresponding eigenfunctions, namely Jacobi poly-fractonomials of first and second kind. These eignefunctions are comprised of smooth and fractional parts, where the latter can be tunned to capture singularities of true solution. They are successfully employed in constructing discrete solution/test function spaces and developing a series of high-order and efficient Petrov-Galerkin spectral methods, see \cite{Zayernouri_FODEs_2014, Zayernouri14-SIAM-Frac-Delay, Zayernouri14-SIAM-Frac-Advection,suzuki2016fractional,Samiee2016unified,samiee2017fast,kharazmi2017petrov,kharazmi2017sem,kharazmi2018fractional,lischke2017petrov,samiee2017unified,samiee2018petrov}. We formulate a numerical scheme in solving coupled system of FPDE and adjoint FSEs by extending the mathematical framework in \cite{Samiee2016unified} and accommodating extra required regularity in the underlying function spaces. We employ Jacobi poly-fractonomials and Legendre polynomials as temporal and spatial basis/test functions, respectively, to develop a Petrov-Galerkin (PG) spectral method. The smart choice of coefficients in spatial basis/test functions yields symmetric property in the resulting mass/stiffness matrices, which is then exploited to formulate a fast solver. Following similar procedure as in \cite{Samiee2016unified}, we also show that the coupled system is mathematically well-posed, and the proposed numerical scheme is stable.

\vspace{0.1 in}

The rest of paper is organized as follows. In section \ref{Sec: Fractional Calculus}, we recall some preliminary definitions in fractional calculus and define proper solution/test spaces beside useful lemmas. In section \ref{Sec: Problem Definition}, we define the problem by providing the fractional model, and then take the weak form of the problem as well. We develop FSEM in section \ref{Sec: FSEM} for the case of FIVP and FPDE. We define the underlying mathematical frame work for the coupled system of FPDE and FSEs and also  construct our Petrov-Galerkin spectral numerical scheme. Moreover, we develop the FSEM based model construction algorithm in section \ref{Sec: FSEM Model Construction} and finally, provide the numerical results in section \ref{Sec: numerical results part I}. We conclude the paper with a summary and conclusion.


%
\section{Definitions}
\label{Sec: Fractional Calculus}
%
Let $ \xi \in [-1,1]$. The left- and right-sided fractional derivative of order $\sigma$, $n-1 < \sigma \leq n$, $n \in \mathbb{N}$, are defined as (see e.g., \cite{Miller93, Podlubny99})
\begin{align}
\label{Eq: left RL derivative}
(\prescript{RL}{-1}{\mathcal{D}}_{\xi}^{\sigma}) u(\xi) = \frac{1}{\Gamma(n-\sigma)}  \frac{d^{n}}{d\xi^n} \int_{-1}^{\xi} \frac{u(s) ds}{(\xi - s)^{\sigma+1-n} },\quad \xi >-1 ,
\\
\label{Eq: right RL derivative}
(\prescript{RL}{\xi}{\mathcal{D}}_{1}^{\sigma}) u(\xi) = \frac{1}{\Gamma(n-\sigma)}  \frac{(-d)^{n}}{d\xi^n} \int_{\xi}^{1} \frac{u(s) ds}{(s - \xi)^{\sigma+1-n} },\quad \xi < 1 ,
\end{align}
respectively. 
An alternative approach in defining the fractional derivatives is the left- and right-sided Caputo derivatives of order $\sigma$, defined, as
\begin{align}
\label{Eq: left Caputo derivative}
&
(\prescript{C}{-1}{\mathcal{D}}_{\xi}^{\sigma} u) (\xi) = \frac{1}{\Gamma(n-\sigma)}  \int_{-1}^{\xi} \frac{u^{(n)}(s) ds}{(\xi - s)^{\sigma+1-n} },\quad \xi>-1,
\\
\label{Eq: right Caputo derivative}
&
(\prescript{C}{\xi}{\mathcal{D}}_{1}^{\sigma} u) (\xi) =  \frac{1}{\Gamma(n-\sigma)}  \int_{\xi}^{1} \frac{u^{(n)}(s) ds}{(s-\xi)^{\sigma+1-n} },\quad \xi<1.
\end{align}
By performing an affine mapping from the standard domain $[-1,1]$ to the interval $t \in [a,b]$, we obtain
\begin{eqnarray}
\label{Eq: RL in xL-xR}
\prescript{RL}{a}{\mathcal{D}}_{t}^{\sigma} u  &=&  (\frac{2}{b-a})^\sigma (\prescript{RL}{-1}{\mathcal{D}}_{\xi}^{\sigma} \, u )(\xi), 
\\ 
\label{Eq: Caputo in xL-xR}
\prescript{C}{a}{\mathcal{D}}_{t}^{\sigma} u  &=&  (\frac{2}{b-a})^\sigma (\prescript{C}{-1}{\mathcal{D}}_{\xi}^{\sigma} \, u) (\xi).
\end{eqnarray} 
Hence, we can perform the operations in the standard domain only once for any given $\sigma$ and efficiently utilize them on any arbitrary interval without resorting to repeating the calculations. Moreover, the corresponding relationship between the Riemann-Liouville and Caputo fractional derivatives in $[a,b]$ for any $\sigma \in (0,1)$ is given by 
\begin{equation}
\label{Eq:  Caputo vs. Riemann}
(\prescript{RL}{a}{\mathcal{D}}_{t}^{\sigma} \, u) (t)  =  \frac{ u(a)}{\Gamma(1-\sigma) (t-a)^{\sigma}}  +   (\prescript{C}{a}{\mathcal{D}}_{t}^{\sigma} \, u) (t).
\end{equation}

\begin{define} 
\textup{We define the following left- and right-sided integro-differential operator with logarithmic-power law kernel, namely \textit{Log-Pow integro-differential operator}, given as,}
\end{define}
\begin{align}
\label{Eq: left sided LP kernel derivative}
&
\prescript{RL-LP}{a}{\mathcal{D}}_{x}^{\sigma} u (x) = 
\frac{1}{\Gamma(n-\sigma)}  \frac{d^n}{dx^n} \int_{a}^{x} \frac{ \log(x-s) \, u(s)}{(x - s)^{\sigma -n +1}} ds,
\\
\label{Eq: right sided LP kernel derivative}
&
\prescript{RL-LP}{x}{\mathcal{D}}_{b}^{\sigma} u (x) = 
\frac{1}{\Gamma(n-\sigma)}  \frac{(-d)^n}{dx^n} \int_{x}^{b} \frac{ \log(s-x) \, u(s)}{(s - x)^{\sigma-n+1}} ds,
\\
\label{Eq: left sided LP kernel derivative Caputo}
&
\prescript{C-LP}{a}{\mathcal{D}}_{x}^{\sigma} u (x) = 
\frac{1}{\Gamma(n-\sigma)}  \, \int_{a}^{x} \frac{ \log(x-s) \, u^{(n)}(s)}{(x - s)^{\sigma -n +1}} ds,
\\
\label{Eq: right sided LP kernel derivative Caputo}
&
\prescript{C-LP}{x}{\mathcal{D}}_{b}^{\sigma} u (x) = 
\frac{1}{\Gamma(n-\sigma)}  \, \int_{x}^{b} \frac{ \log(s-x) \, u^{(n)}(s)}{(s - x)^{\sigma-n+1}} ds,
\end{align}
where $RL-LP$ and $C-LP$ stand for Log-Pow integro-differential operator, which partially resemble the fractional derivative in Riemann-Liouville and Caputo sense, respectively. The following lemma shows a useful relation between the two aforementioned operators.

%
\begin{lem}
	\label{lem: Caputo vs. Riemann PowLog}
	Let $x\in [a,b]$. Then, the following relation holds.
	
	\noindent Part A: $\sigma \in (0,1)$ 
	\begin{align}
	\label{Eq:  Caputo vs. Riemann PowLog 1}
	\prescript{RL-LP}{a}{\mathcal{D}}_{x}^{\sigma} \, u (x)  =  \frac{ u(a)}{\Gamma(1-\sigma)}\frac{\log(x-a)}{(x-a)^{\sigma}}  +   \prescript{C-LP}{a}{\mathcal{D}}_{x}^{\sigma} \, u (x).
	\end{align}

	\noindent Part B: $\sigma \in (1,2)$
	\begin{align}
	\label{Eq:  Caputo vs. Riemann PowLog 2}
	\prescript{RL-LP}{a}{\mathcal{D}}_{x}^{\sigma} \, u (x)  =  \frac{u(a)}{\Gamma(2-\sigma)} \frac{1+(1-\sigma)\log(x-a)}{(x-a)^{\sigma}} + \frac{u'(a)}{\Gamma(2-\sigma)} \frac{\log(x-a)}{(x-a)^{\sigma-1}}  +   \prescript{C-LP}{a}{\mathcal{D}}_{x}^{\sigma} \, u (x).
	\end{align}
\end{lem}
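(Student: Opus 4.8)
The plan is to mirror the classical derivation of the Caputo--Riemann relation \eqref{Eq:  Caputo vs. Riemann}: transfer the outer $n$-fold $x$-derivative onto $u$ by integrating by parts inside the convolution, so that the singular kernel is moved from $u$ onto its derivatives and the boundary contributions at $s=a$ generate the explicit correction terms. The only genuinely new ingredient relative to the pure power-law case is a primitive of the logarithmic--power kernel. I would first record the elementary identity for $\frac{d}{ds}\big[(x-s)^{1-\mu}(\alpha+\beta\log(x-s))\big]$ and solve for $\alpha,\beta$ so that this derivative equals $\log(x-s)/(x-s)^{\mu}$; for $\mu=\sigma\in(0,1)$ this gives the primitive $V(s)=-\frac{(x-s)^{1-\sigma}}{1-\sigma}\big(\log(x-s)-\frac{1}{1-\sigma}\big)$, whose correctness I would verify by direct differentiation.

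For Part A ($n=1$), I substitute $V$ and integrate by parts in the integral defining $\prescript{RL-LP}{a}{\mathcal{D}}_{x}^{\sigma}u$. Since the resulting boundary power exponent $1-\sigma$ is positive, the term at $s=x$ vanishes (the power beats the logarithm), while the term at $s=a$ contributes $-V(a)u(a)$. I then apply $\frac{d}{dx}$ by Leibniz's rule, noting that $\partial V/\partial x=-\log(x-s)/(x-s)^{\sigma}$, so the surviving $s$-integral reproduces exactly $\prescript{C-LP}{a}{\mathcal{D}}_{x}^{\sigma}u$; the $x$-derivative of the boundary term collapses, the two non-logarithmic pieces cancelling, to $u(a)\log(x-a)/(x-a)^{\sigma}$. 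Dividing by $\Gamma(1-\sigma)$ yields \eqref{Eq:  Caputo vs. Riemann PowLog 1}.

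For Part B ($n=2$, $\sigma\in(1,2)$, kernel exponent $\sigma-1\in(0,1)$) the same scheme runs with two integrations by parts, producing boundary data at $s=a$ in both $u(a)$ and $u'(a)$, after which $\frac{d^2}{dx^2}$ is applied. I expect the main obstacle to be the $u(a)$-coefficient: whereas the $u'(a)$ term comes out cleanly as $\frac{u'(a)}{\Gamma(2-\sigma)}\frac{\log(x-a)}{(x-a)^{\sigma-1}}$, the $u(a)$ contribution mixes a pure power with a log-times-power and must be shown to assemble into $\frac{u(a)}{\Gamma(2-\sigma)}\frac{1+(1-\sigma)\log(x-a)}{(x-a)^{\sigma}}$; the non-logarithmic constant $1$ in that numerator is precisely what the second $x$-differentiation of the lower-order boundary primitive generates, and keeping that bookkeeping exact is the crux.

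Conceptually --- and this is the route I would use both to double-check the algebra and to explain why a logarithmic kernel appears at all --- I would exploit the identity $\log(x-s)\,(x-s)^{-(\sigma-n+1)}=-\partial_\sigma (x-s)^{-(\sigma-n+1)}$. Interchanging $\partial_\sigma$ with $\int_a^x$ and with $\frac{d^n}{dx^n}$ (legitimate under the assumed regularity of $u$) gives $\prescript{RL-LP}{a}{\mathcal{D}}_{x}^{\sigma}u=\psi(n-\sigma)\,\prescript{RL}{a}{\mathcal{D}}_{x}^{\sigma}u-\partial_\sigma\,\prescript{RL}{a}{\mathcal{D}}_{x}^{\sigma}u$ and the analogous Caputo relation, the digamma $\psi$ entering through $\partial_\sigma[1/\Gamma(n-\sigma)]$. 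Subtracting these and differentiating in $\sigma$ the known relation \eqref{Eq:  Caputo vs. Riemann} together with its $n=2$ analogue $\prescript{RL}{a}{\mathcal{D}}_{x}^{\sigma}u-\prescript{C}{a}{\mathcal{D}}_{x}^{\sigma}u=\sum_{k=0}^{n-1}\frac{u^{(k)}(a)}{\Gamma(k+1-\sigma)}(x-a)^{k-\sigma}$ reproduces the stated formulas: the $\log$ terms come from $\partial_\sigma(x-a)^{k-\sigma}$, and in Part B the recurrence $\psi(2-\sigma)=\psi(1-\sigma)+\frac{1}{1-\sigma}$ is exactly what converts the digamma mismatch between the order-$(2-\sigma)$ prefactor and the order-$(1-\sigma)$ boundary term into the constant $1$ of the numerator $1+(1-\sigma)\log(x-a)$.
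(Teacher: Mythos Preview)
Your primary argument---integrating by parts inside the convolution using the explicit primitive of $(x-s)^{-\mu}\log(x-s)$, then applying the outer $\frac{d^n}{dx^n}$ via Leibniz's rule---is exactly the route the paper takes in Appendix~\ref{Sec: App. proof of RL Caputo PL derivative}, and your primitive $V(s)$ agrees with theirs after a trivial rearrangement. Your supplementary $\partial_\sigma$-based derivation (differentiating the classical relation \eqref{Eq:  Caputo vs. Riemann} in $\sigma$ and invoking the digamma recurrence) is a correct and elegant cross-check that the paper does not include.
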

\begin{proof}
	See Appendix \ref{Sec: App. proof of RL Caputo PL derivative} for proof.
\end{proof}
%

%
\subsection{\textbf{Fractional Sobolev Spaces}}
\label{Sec: Sol Test Space FPDE}
%
We define some functional spaces and their associated norms \cite{kharazmi2017petrov,Li2009}. By $H^\sigma(\mathbb{R}) = \big{\{} u(t) \vert u \in L^{2}(\mathbb{R});\, (1+\vert \omega \vert^2)^{\frac{\sigma}{2}} F(u)(\omega) \in L^{2}(\mathbb{R}) \big{\}}$, $\sigma \geq 0$, we denote the fractional Sobolev space on $\mathbb{R}$, endowed with norm $\Vert u \Vert_{H^{\sigma}_{\mathbb{R}}}=\Vert (1+\vert \omega \vert^2)^{\frac{\sigma}{2}} F(u)(\omega) \Vert_{L^{2}(\mathbb{R})}$, where $\mathcal{F}(u)$ represents the Fourier transform of $u$. Subsequently, we denote by $H^\sigma(\Lambda) =\big{\{} u\in L^{2}(\Lambda)\, \vert \,\exists \tilde{u} \in H^{\sigma}(\mathbb{R})\, \, s.t. \, \,\tilde{u}\vert_{\Lambda}=u \big{\}}$, $\sigma \geq 0$, the fractional Sobolev space on any finite closed interval, e.g. $\Lambda = (a,b)$, with norm $ \Vert u \Vert_{H^{\sigma}(\Lambda)}= \underset{\tilde{u}\in H^{\sigma}_{\mathbb{R}},\, \tilde{u}\vert_{\Lambda}=u }{\inf} \, \Vert \tilde{u} \Vert_{{}H^{\sigma}(\mathbb{R})}$. We define the following useful norms as:
\begin{align*}
&\Vert \cdot \Vert_{{^l}H^{\sigma}_{}(\Lambda)} = \Big(\Vert \prescript{}{a}{\mathcal{D}}_{x}^{\sigma}\, (\cdot)\Vert_{L^2(\Lambda)}^2+\Vert \cdot \Vert_{L^2(\Lambda)}^2 \Big)^{\frac{1}{2}},
\\
&\Vert \cdot \Vert_{{^r}H^{\sigma}_{}(\Lambda)} = \Big(\Vert \prescript{}{x}{\mathcal{D}}_{b}^{\sigma}\, (\cdot)\Vert_{L^2(\Lambda)}^2+\Vert \cdot \Vert_{L^2(\Lambda)}^2 \Big)^{\frac{1}{2}},
\\
&\Vert \cdot \Vert_{{^c}H^{\sigma}_{}(\Lambda)} = \Big(\Vert \prescript{}{x}{\mathcal{D}}_{b}^{\sigma}\, (\cdot)\Vert_{L^2(\Lambda)}^2+\Vert \prescript{}{a}{\mathcal{D}}_{x}^{\sigma}\, (\cdot)\Vert_{L^2(\Lambda)}^2+\Vert \cdot \Vert_{L^2(\Lambda)}^2 \Big)^{\frac{1}{2}},
\end{align*}
where the equivalence of $\Vert \cdot \Vert_{{^l}H^{\sigma}_{}(\Lambda)}$ and $\Vert \cdot \Vert_{{^r}H^{\sigma}_{}(\Lambda)}$ are shown in \cite{Li2009,ervin2007variational,Li2010}. We show the equivalence of these two norms with $\Vert \cdot \Vert_{{^c}H^{\sigma}_{}(\Lambda)}$ in the following lemma.
%
%
\begin{lem}
	\label{Lem: norm equivalence 1}
	Let $\sigma \geq 0$ and $\sigma \neq n-\frac{1}{2}$. Then, the norms $\Vert \cdot \Vert_{{^l}H^{\sigma}_{}(\Lambda)}$ and $\Vert \cdot \Vert_{{^r}H^{\sigma}_{}(\Lambda)}$ are equivalent to $\Vert \cdot \Vert_{{^c}H^{\sigma}_{}(\Lambda)}$.
\end{lem}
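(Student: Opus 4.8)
The plan is to prove the two-sided inequalities defining norm equivalence by leveraging the already-cited equivalence of $\Vert\cdot\Vert_{{^l}H^{\sigma}(\Lambda)}$ and $\Vert\cdot\Vert_{{^r}H^{\sigma}(\Lambda)}$, so that essentially no new analytic input is required. One direction is immediate from the structure of the three norms: since $\Vert u \Vert_{{^c}H^{\sigma}(\Lambda)}^2 = \Vert \prescript{}{x}{\mathcal{D}}_{b}^{\sigma} u\Vert_{L^2(\Lambda)}^2 + \Vert \prescript{}{a}{\mathcal{D}}_{x}^{\sigma} u\Vert_{L^2(\Lambda)}^2 + \Vert u \Vert_{L^2(\Lambda)}^2$ contains every (nonnegative) term appearing in both $\Vert u\Vert_{{^l}H^{\sigma}(\Lambda)}^2$ and $\Vert u\Vert_{{^r}H^{\sigma}(\Lambda)}^2$, we get at once $\Vert u\Vert_{{^l}H^{\sigma}(\Lambda)} \le \Vert u\Vert_{{^c}H^{\sigma}(\Lambda)}$ and $\Vert u\Vert_{{^r}H^{\sigma}(\Lambda)}\le \Vert u\Vert_{{^c}H^{\sigma}(\Lambda)}$. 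Hence only the reverse bounds require work.

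For the reverse direction I would write the central norm as the left norm plus the single extra right-derivative term and control that term by the right norm: $\Vert u\Vert_{{^c}H^{\sigma}(\Lambda)}^2 = \Vert u\Vert_{{^l}H^{\sigma}(\Lambda)}^2 + \Vert \prescript{}{x}{\mathcal{D}}_{b}^{\sigma} u\Vert_{L^2(\Lambda)}^2 \le \Vert u\Vert_{{^l}H^{\sigma}(\Lambda)}^2 + \Vert u\Vert_{{^r}H^{\sigma}(\Lambda)}^2$. Invoking the cited equivalence, i.e. a constant $C>0$ with $\Vert u\Vert_{{^r}H^{\sigma}(\Lambda)}\le C\,\Vert u\Vert_{{^l}H^{\sigma}(\Lambda)}$, then yields $\Vert u\Vert_{{^c}H^{\sigma}(\Lambda)}^2 \le (1+C^2)\,\Vert u\Vert_{{^l}H^{\sigma}(\Lambda)}^2$. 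The symmetric computation—bounding instead the extra left-derivative term and using the opposite inequality $\Vert u\Vert_{{^l}H^{\sigma}(\Lambda)}\le C'\,\Vert u\Vert_{{^r}H^{\sigma}(\Lambda)}$—gives $\Vert u\Vert_{{^c}H^{\sigma}(\Lambda)}\le \sqrt{1+C'^2}\,\Vert u\Vert_{{^r}H^{\sigma}(\Lambda)}$. Combined with the trivial bounds above, this establishes that all three norms are mutually equivalent.

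The hypothesis $\sigma \neq n-\tfrac12$ enters entirely through the cited left/right equivalence and not through any new step, so the only point demanding care is to invoke that result in precisely the right form. At the Fourier level the reason is that, for the zero-extension of $u$, both $\vert\mathcal{F}(\prescript{}{a}{\mathcal{D}}_{x}^{\sigma} u)(\omega)\vert$ and $\vert\mathcal{F}(\prescript{}{x}{\mathcal{D}}_{b}^{\sigma} u)(\omega)\vert$ are comparable to $\vert\omega\vert^{\sigma}\vert\mathcal{F}(u)(\omega)\vert$, and the constant comparing these one-sided seminorms with the full $H^{\sigma}(\Lambda)$ seminorm degenerates exactly at the half-integer values $\sigma=n-\tfrac12$. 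This is the sole genuine obstacle, and since it is already resolved in the references, the remaining argument reduces to the elementary term-by-term bookkeeping sketched above.
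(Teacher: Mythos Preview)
Your proposal is correct and follows essentially the same approach as the paper: both rely entirely on the cited equivalence of $\Vert\cdot\Vert_{{^l}H^{\sigma}(\Lambda)}$ and $\Vert\cdot\Vert_{{^r}H^{\sigma}(\Lambda)}$ and then do elementary term-by-term bookkeeping. The only cosmetic difference is that the paper routes the inequalities through the Fourier-based norm $\Vert\cdot\Vert_{H^{\sigma}(\Lambda)}$, whereas you compare the three norms directly; your version is arguably cleaner.
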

\begin{proof}
	See Appendix \ref{Sec: App. proof of norm equivalence} for proof.
\end{proof}

We also define $C^{\infty}_{0}(\Lambda)$ as the space of smooth functions with compact support in $(a,b)$. We denote by $\prescript{l}{}H^{\sigma}_0(\Lambda)$, $\prescript{r}{}H^{\sigma}_0(\Lambda)$, and $\prescript{c}{}H^{\sigma}_0(\Lambda)$ as the closure of $C^{\infty}_{0}(\Lambda)$ with respect to the norms $\Vert \cdot \Vert_{{^l}H^{\sigma}_{}(\Lambda)}$, $\Vert \cdot \Vert_{{^r}H^{\sigma}_{}(\Lambda)}$, and $\Vert \cdot \Vert_{{^c}H^{\sigma}_{}(\Lambda)}$. 
\begin{lem}[\cite{ervin2007variational,Li2010}]
	\label{Lem: sobolev equivalence}
	The Sobolev spaces $\prescript{l}{}H^{\sigma}_0(\Lambda)$, $\prescript{r}{}H^{\sigma}_0(\Lambda)$, and $\prescript{c}{}H^{\sigma}_0(\Lambda)$ are equal and their seminorms are equivalent to $\vert \cdot \vert_{{}H^{\sigma}_{}(\Lambda)}^{*} = \big\vert \left( \prescript{}{a}{\mathcal{D}}_{x}^{\sigma}\, (\cdot), \prescript{}{x}{\mathcal{D}}_{b}^{\sigma}\, (\cdot) \right) \big\vert_{\Lambda}^{\frac{1}{2}}$
\end{lem}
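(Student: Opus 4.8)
The plan is to reduce every norm to the whole line by zero-extension and then diagonalize the one-sided derivatives through the Fourier transform, in the spirit of \cite{ervin2007variational,Li2010}. First I would note that each $u \in C_0^\infty(\Lambda)$ extends by zero to a compactly supported $\tilde u$ on $\Real$, and that on this extension the finite-interval Riemann--Liouville derivatives agree with the whole-line ones, $\prescript{}{a}{\mathcal{D}}_{x}^{\sigma}u = \prescript{}{-\infty}{\mathcal{D}}_{x}^{\sigma}\tilde u$ and $\prescript{}{x}{\mathcal{D}}_{b}^{\sigma}u = \prescript{}{x}{\mathcal{D}}_{\infty}^{\sigma}\tilde u$, since the lower (resp. upper) terminal contributes nothing on the region where $\tilde u$ vanishes. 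All three seminorms are thus evaluated on the same extended function, so it suffices to compare them on $\Real$ and then invoke density.

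Second, I would pass to the Fourier side. Using the symbols $\cF(\prescript{}{-\infty}{\mathcal{D}}_{x}^{\sigma}\tilde u)(\omega) = (i\omega)^\sigma \cF(\tilde u)(\omega)$ and $\cF(\prescript{}{x}{\mathcal{D}}_{\infty}^{\sigma}\tilde u)(\omega) = (-i\omega)^\sigma \cF(\tilde u)(\omega)$, Plancherel's identity yields
\begin{equation*}
\Vert \prescript{}{a}{\mathcal{D}}_{x}^{\sigma} u \Vert_{L^2(\Real)}^2 = \Vert \prescript{}{x}{\mathcal{D}}_{b}^{\sigma} u \Vert_{L^2(\Real)}^2 = \int_{\Real} \abs{\omega}^{2\sigma}\,\abs{\cF(\tilde u)(\omega)}^2\, d\omega =: \abs{u}_J^2,
\end{equation*}
so the left and right seminorms coincide and equal a single ``Fourier'' seminorm $\abs{u}_J$. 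The crucial computation is the cross term: writing $(i\omega)^\sigma = \abs{\omega}^\sigma e^{i\sigma\pi\,\mathrm{sgn}(\omega)/2}$ and its reflection, one obtains $(i\omega)^\sigma\,\overline{(-i\omega)^\sigma} = \abs{\omega}^{2\sigma} e^{i\sigma\pi\,\mathrm{sgn}(\omega)}$, whose imaginary part is odd in $\omega$ and integrates to zero against the even weight $\abs{\cF(\tilde u)}^2$, leaving
\begin{equation*}
\big(\prescript{}{a}{\mathcal{D}}_{x}^{\sigma} u,\ \prescript{}{x}{\mathcal{D}}_{b}^{\sigma} u\big)_{L^2(\Real)} = \cos(\sigma\pi)\,\abs{u}_J^2,
\qquad\text{hence}\qquad
\abs{u}^{*}_{H^{\sigma}(\Lambda)} = \abs{\cos(\sigma\pi)}^{1/2}\,\abs{u}_J .
\end{equation*}

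Third, I would assemble the equivalences. Each of the left, right, and combined seminorms equals $\abs{u}_J$ up to a fixed constant (the combined one carries a factor $\sqrt{2}$ and $\abs{\cdot}^{*}$ the factor $\abs{\cos(\sigma\pi)}^{1/2}$), so provided $\cos(\sigma\pi)\neq 0$, that is $\sigma\neq n-\tfrac12$ exactly as required in Lemma~\ref{Lem: norm equivalence 1}, all four seminorms are mutually equivalent; adjoining the common $\Vert\cdot\Vert_{L^2(\Lambda)}$ term makes the three full norms equivalent. Since $\prescript{l}{}H^{\sigma}_0(\Lambda)$, $\prescript{r}{}H^{\sigma}_0(\Lambda)$, and $\prescript{c}{}H^{\sigma}_0(\Lambda)$ are by definition the closures of the \emph{same} set $C_0^\infty(\Lambda)$ under equivalent norms, they contain the same limit elements and therefore coincide, which is the assertion of the lemma.

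The main obstacle is the careful bookkeeping of the complex powers $(i\omega)^\sigma$ and $(-i\omega)^\sigma$ across the branch point $\omega = 0$: this is precisely what manufactures the factor $\cos(\sigma\pi)$ and what forces the exclusion $\sigma\neq n-\tfrac12$, at which value $\abs{\cdot}^{*}$ degenerates and fails to control $\abs{u}_J$. A secondary, more routine point is rigorously justifying the zero-extension identification of the finite-interval and whole-line operators together with the density passage from $C_0^\infty(\Lambda)$ to the closure, but both are standard once the Fourier-side identities above are in place.
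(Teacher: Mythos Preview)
The paper does not supply its own proof of this lemma; it is quoted directly from \cite{ervin2007variational,Li2010} without argument. Your sketch is precisely the Fourier-transform argument used in those references and is correct: zero-extension identifies the one-sided derivatives with their whole-line counterparts, Plancherel turns both one-sided seminorms into $\int_{\Real}|\omega|^{2\sigma}|\cF(\tilde u)|^2\,d\omega$, and the cross term picks up the factor $\cos(\sigma\pi)$, giving the equivalence away from $\sigma=n-\tfrac12$.
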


Based on Lemma \eqref{Lem: sobolev equivalence}, and assuming that $\big{\vert}(\prescript{}{a}{\mathcal{D}}_{x}^{\sigma}\, u, \prescript{}{x}{\mathcal{D}}_{b}^{\sigma}\, v )_{\Lambda}^{} \big{\vert} > 0$ and $\big{\vert}(\prescript{}{x}{\mathcal{D}}_{b}^{\sigma}\, u, \prescript{}{a}{\mathcal{D}}_{x}^{\sigma}\, v )_{\Lambda}^{} \big{\vert} > 0$, we can prove that $\big{\vert}(\prescript{}{a}{\mathcal{D}}_{x}^{\sigma}\, u, \prescript{}{x}{\mathcal{D}}_{b}^{\sigma}\, v )_{\Lambda}^{} \big{\vert} \geq \beta_1 \, \vert u \vert_{{^l}H^{\sigma}_{}(\Lambda)}\, \vert  v \vert_{{^r}H^{\sigma}_{}(\Lambda)}$ and $\big{\vert}(\prescript{}{x}{\mathcal{D}}_{b}^{\sigma}\, u, \prescript{}{a}{\mathcal{D}}_{x}^{\sigma}\, v )_{\Lambda}^{} \big{\vert} \geq \beta_2 \, \vert u \vert_{{^r}H^{\sigma}_{}(\Lambda)}\, \vert  v \vert_{{^l}H^{\sigma}_{}(\Lambda)}$, where $\beta_1$ and $\beta_2$ are positive constants. Following \cite{Samiee2016unified}, we define the corresponding solution and test spaces of our problem. Thus, by letting $\Lambda_1 = (a_1,b_1)$, $\Lambda_j = (a_j,b_j) \times \Lambda_{j-1}$ for $j=2,\cdots,d$, we define $\mathcal{X}_1 = H^{\frac{\beta_1}{2}}_{0}(\Lambda_1)$, which is associated with the norm $ \Vert \cdot \Vert_{{^c}H^{\frac{\beta_1}{2}}_{}(\Lambda_1)}$, and accordingly, $\mathcal{X}_j, \, j=2,\cdots,d$ as
\begin{eqnarray}
\mathcal{X}_2 &=& H^{\frac{\beta_2}{2}}_0 \Big((a_2,b_2); L^2(\Lambda_1) \Big) \cap L^2((a_2,b_2); \mathcal{X}_1),
\\
&\vdots&
\nonumber
\\
\mathcal{X}_d &=& H^{\frac{\beta_d}{2}}_0 \Big((a_d,b_d); L^2(\Lambda_{d-1}) \Big) \cap L^2((a_d,b_d); \mathcal{X}_{d-1}),
\end{eqnarray}
associated with norms $\Vert \cdot \Vert_{\mathcal{X}_j} = \bigg{\{} \Vert \cdot \Vert_{H^{\frac{\beta_j}{2}}_0 \Big((a_j,b_j); L^2(\Lambda_{j-1}) \Big)}^2 + \Vert \cdot \Vert_{ L^2\Big((a_j,b_j); \mathcal{X}_{j-1}\Big)}^2 \bigg{\}}^{\frac{1}{2}}, \,\, j=2,3,\cdots,d$.

\vspace{0.1 in}
\begin{lem}
	%
	\label{space norm 1}
	Let $\sigma \geq 0$ and $\sigma \neq n-\frac{1}{2}$. Then, for $j=1,2,\cdots,d$ 
	\begin{align*}
	%
	\Vert \cdot \Vert^{2}_{\mathcal{X}_j} \equiv  \sum_{i=1}^{j} \Big(\Vert \prescript{}{x_i}{\mathcal{D}}_{b_i}^{\beta_i/2}\, (\cdot)\Vert_{L^2(\Lambda_j)}^2+\Vert \prescript{}{a_i}{\mathcal{D}}_{x_i}^{\beta_i/2}\, (\cdot)\Vert_{L^2(\Lambda_j)}^2 \Big) + \Vert  \cdot \Vert_{L^2(\Lambda_j)}^2.
	\end{align*}
\end{lem}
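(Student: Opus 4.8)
The plan is to argue by induction on $j$, using the recursive definition of $\mathcal{X}_j$ together with the one-dimensional norm equivalence of Lemma~\ref{Lem: norm equivalence 1}. Throughout, the hypothesis $\sigma \neq n-\tfrac12$ is read as the requirement that each order $\beta_i/2$ avoid the exceptional value $n-\tfrac12$, which is precisely what makes Lemma~\ref{Lem: norm equivalence 1} applicable in the $i$-th coordinate direction.

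For the base case $j=1$ the assertion is in fact an equality rather than a mere equivalence: by construction $\Vert \cdot \Vert_{\mathcal{X}_1} = \Vert \cdot \Vert_{{}^cH^{\beta_1/2}(\Lambda_1)}$, and the norm $\Vert \cdot \Vert_{{}^cH^{\sigma}(\Lambda)}$ was defined as $\big(\Vert \prescript{}{x}{\mathcal{D}}_{b}^{\sigma}(\cdot)\Vert^2_{L^2(\Lambda)} + \Vert \prescript{}{a}{\mathcal{D}}_{x}^{\sigma}(\cdot)\Vert^2_{L^2(\Lambda)} + \Vert \cdot \Vert^2_{L^2(\Lambda)}\big)^{1/2}$, which is exactly the stated right-hand side with $i=j=1$.

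For the inductive step, assume the equivalence holds on $\Lambda_{j-1}$ with constants depending only on $\{\beta_i\}_{i<j}$ and the geometry, and split the defining norm as $\Vert u \Vert^2_{\mathcal{X}_j} = \Vert u \Vert^2_{H^{\beta_j/2}_0((a_j,b_j);L^2(\Lambda_{j-1}))} + \Vert u \Vert^2_{L^2((a_j,b_j);\mathcal{X}_{j-1})}$. The first summand differentiates only in the $x_j$ variable, with values in the Hilbert space $L^2(\Lambda_{j-1})$; applying the Hilbert-space-valued analogue of Lemma~\ref{Lem: norm equivalence 1} in that single variable (the Fourier-based proof carries over verbatim with Plancherel replaced by its Bochner version, so the equivalence constants are unchanged), together with the equivalence of the Fourier $H^\sigma$ norm and the one-sided norms recalled above and the Fubini identity $L^2((a_j,b_j);L^2(\Lambda_{j-1})) = L^2(\Lambda_j)$, shows that this term is equivalent to $\Vert \prescript{}{x_j}{\mathcal{D}}_{b_j}^{\beta_j/2} u\Vert^2_{L^2(\Lambda_j)} + \Vert \prescript{}{a_j}{\mathcal{D}}_{x_j}^{\beta_j/2} u\Vert^2_{L^2(\Lambda_j)} + \Vert u \Vert^2_{L^2(\Lambda_j)}$. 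For the second summand, write it as $\int_{a_j}^{b_j} \Vert u(x_j,\cdot)\Vert^2_{\mathcal{X}_{j-1}}\,dx_j$ and insert the induction hypothesis slice-wise; since the equivalence constants there are uniform in $x_j$, the two-sided bounds survive integration, giving equivalence to $\sum_{i=1}^{j-1}\big(\Vert \prescript{}{x_i}{\mathcal{D}}_{b_i}^{\beta_i/2} u\Vert^2_{L^2(\Lambda_j)} + \Vert \prescript{}{a_i}{\mathcal{D}}_{x_i}^{\beta_i/2} u\Vert^2_{L^2(\Lambda_j)}\big) + \Vert u \Vert^2_{L^2(\Lambda_j)}$. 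Adding the two equivalences and absorbing the doubled $\Vert u \Vert^2_{L^2(\Lambda_j)}$ (harmless up to a factor of $2$) produces the full sum over $i=1,\dots,j$, closing the induction.

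I expect the only genuinely delicate point to be the rigorous transfer of the scalar equivalence of Lemma~\ref{Lem: norm equivalence 1} to the vector-valued first summand, and the uniform-constant integration of the induction hypothesis in the second summand. Both rest on the same structural fact, namely that in each of the two terms the fractional operators act in a single coordinate and commute with integration over the remaining coordinates, so the one-dimensional equivalence constants are inherited without deterioration; verifying this commutation and the Bochner version of Plancherel is where the care is needed, while the rest is bookkeeping.
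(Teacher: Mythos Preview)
Your proposal is correct and follows essentially the same route as the paper: induction on $j$, splitting $\Vert\cdot\Vert_{\mathcal{X}_j}^2$ into the $H^{\beta_j/2}_0((a_j,b_j);L^2(\Lambda_{j-1}))$ piece and the $L^2((a_j,b_j);\mathcal{X}_{j-1})$ piece, handling the first via the one-variable equivalence of Lemma~\ref{Lem: norm equivalence 1} together with Fubini, and the second via the induction hypothesis integrated over $x_j$. The paper is somewhat less explicit than you are about the vector-valued extension of Lemma~\ref{Lem: norm equivalence 1} (it simply writes the $H^{\beta_j/2}_0$ norm as the ${}^cH^{\beta_j/2}$ norm and expands by Fubini), so your flagging of that step as the ``delicate point'' is well placed but does not represent a different strategy.
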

\begin{proof}
	See Appendix \ref{Sec: App. proof of norm Xd} for proof.
\end{proof}
\vspace{0.1 in}

Moreover, by letting ${_0}C^{\infty}(I)$ and $C^{\infty}_{0}(I)$ be the space of smooth functions with compact support in $(0,T]$ and $[0,T)$, respectively, we define $\prescript{l}{}H^{s}(I)$ and $\prescript{r}{}H^{s}(I)$ as the closure of ${_0}C^{\infty}(I)$ and $C^{\infty}_{0}(I)$ with respect to the norms $\Vert \cdot \Vert_{\prescript{l}{}H^{s}(I)}$ and $\Vert \cdot \Vert_{\prescript{r}{}H^{s}(I)}$. 
We also define 
\begin{align*}
\prescript{l}{0}H^{\frac{\alpha}{2}} \Big(I; L^2(\Lambda_d) \Big) = 
\Big{\{} u \, \big|\, \Vert u(t,\cdot) \Vert_{L^2(\Lambda_d)} \in H^{\frac{\alpha}{2}}(I), u\vert_{t=0}=u\vert_{x=a_j}=u\vert_{x=b_j}=0,\, j=1,2,\cdots,d  \Big{\}},
\\
\prescript{r}{0}H^{\frac{\alpha}{2}} \Big(I; L^2(\Lambda_d) \Big) = 
\Big{\{} v \,\big|\, \Vert v(t,\cdot) \Vert_{L^2(\Lambda_d)} \in H^{\frac{\alpha}{2}}(I), v\vert_{t=T}=v\vert_{x=a_j}=v\vert_{x=b_j}=0,\, j =1,2,\cdots,d  \Big{\}},
\end{align*}
equipped with norms $\Vert u \Vert_{\prescript{l}{}H^{\frac{\alpha}{2}}(I; L^2(\Lambda_d))}$ and $\Vert u \Vert_{\prescript{r}{}H^{\frac{\alpha}{2}}(I; L^2(\Lambda_d))}$, respectively, which take the following forms
\begin{align}
%
\label{space norm 2}
&
\Vert u \Vert_{\prescript{l}{}H^{\frac{\alpha}{2}}(I; L^2(\Lambda_d))} = \Big{\Vert} \, \Vert u(t,\cdot) \Vert_{L^2(\Lambda_d)}\, \Big{\Vert}_{{^l}H^{\frac{\alpha}{2}}(I)}
=\Big(\Vert \prescript{}{0}{\mathcal{D}}_{t}^{\frac{\alpha}{2}}\, (u)\Vert_{L^2(\Omega)}^2 + \Vert u\Vert_{L^2(\Omega)}^2 \Big)^{\frac{1}{2}},
\\
&\Vert u \Vert_{\prescript{r}{}H^{\frac{\alpha}{2}}(I; L^2(\Lambda_d))} = \Big{\Vert} \, \Vert u(t,\cdot) \Vert_{L^2(\Lambda_d)}\, \Big{\Vert}_{{^r}H^{\frac{\alpha}{2}}(I)}
= \Big(\Vert \prescript{}{t}{\mathcal{D}}_{T}^{\frac{\alpha}{2}}\, (u)\Vert_{L^2(\Omega)}^2+\Vert u\Vert_{L^2(\Omega)}^2\Big)^{\frac{1}{2}}.
\end{align}
%

%
\subsection*{\textbf{Solution and Test Spaces}}
%
We define the solution space $U$ and test space $V$, respectively, as
\begin{align}
\label{Eq: solution test space}
U  = \prescript{l}{0}H^{\frac{\alpha}{2}}\Big(I; L^2(\Lambda_d) \Big) \cap L^2(I; \mathcal{X}_d),
\quad%
V  = \prescript{r}{0}H^{\frac{\alpha}{2}}\Big(I; L^2(\Lambda_d)\Big) \cap L^2(I; \mathcal{X}_d),
\end{align}
endowed with norms
\begin{align}
\label{Eq: solution test space norm}
\Vert u \Vert_{U} &= 
\Big{\{}\Vert u \Vert_{\prescript{l}{}H^{\frac{\alpha}{2}}(I; L^2(\Lambda_d))}^2 + \Vert u \Vert_{L^2(I; \mathcal{X}_d)}^2 \Big{\}}^{\frac{1}{2}}, 
\quad
\Vert v \Vert_{V} &= 
\Big{\{}\Vert v \Vert_{\prescript{r}{}H^{\tau}(I; L^2(\Lambda_d))}^2  + \Vert v \Vert_{ L^2(I; \mathcal{X}_d)}^2 \Big{\}}^{\frac{1}{2}},
\end{align}
Using Lemma \ref{space norm 1}, we can show that
\begin{align}
%
\label{space norm 3}
\Vert u \Vert_{L^2(I; \mathcal{X}_d)}
=
\Big{\Vert} \, \Vert u(t,.) \Vert_{\mathcal{X}_d}\,\Big{\Vert}_{L^2(I)}
=
\Big{\{}  \Vert u \Vert_{L^2(\Omega)}^2 + \sum_{j=1}^{d} \big( \Vert \prescript{}{x_j}{\mathcal{D}}_{b_j}^{\frac{\beta_j}{2}}\, u  \,\Vert_{L^2(\Omega)}^2 
+ \Vert \prescript{}{a_j}{\mathcal{D}}_{x_j}^{\frac{\beta_j}{2}}\, u \,\Vert_{L^2(\Omega)}^2 \big) 
\Big{\}}^{\frac{1}{2}}.
\end{align}
Therefore, by \eqref{space norm 2} we write \eqref{Eq: solution test space norm} as
\begin{align}
\label{Eq: solution space norm}
\Vert u \Vert_{U} 
&= 
\Big{\{}  \Vert u \Vert_{L^2(\Omega)}^2 + \Vert \prescript{}{0}{\mathcal{D}}_{t}^{\frac{\alpha}{2}}\, u\, \Vert_{L^2(\Omega)}^2 
+ \sum_{j=1}^{d} \big( \Vert \prescript{}{x_j}{\mathcal{D}}_{b_j}^{\frac{\beta_j}{2}}\, u \, \Vert_{L^2(\Omega)}^2+\Vert \prescript{}{a_j}{\mathcal{D}}_{x_j}^{\frac{\beta_j}{2}}\, u \, \Vert_{L^2(\Omega)}^2 \big) \Big{\}}^{\frac{1}{2}}, 
\\
\label{Eq: test space norm}
\Vert v \Vert_{V} 
& =
\Big{\{}  \Vert v \Vert_{L^2(\Omega)}^2 + \Vert \prescript{}{t}{\mathcal{D}}_{T}^{\frac{\alpha}{2}}\, v \, \Vert_{L^2(\Omega)}^2 
+ \sum_{j=1}^{d} \big( \Vert \prescript{}{x_j}{\mathcal{D}}_{b_j}^{\frac{\beta_j}{2}}\, v \, \Vert_{L^2(\Omega)}^2
+\Vert \prescript{}{a_j}{\mathcal{D}}_{x_j}^{\frac{\beta_j}{2}}\, v \, \Vert_{L^2(\Omega)}^2 \big) \Big{\}}^{\frac{1}{2}}.
\end{align}

The following lemmas help us obtain the weak formulation of our problem, construct the numerical scheme and further prove the stability of our method. 

%
\begin{lem}
	\label{Lem: left frac proj}
	\cite{Li2009}: For all $\alpha \in  (0,1)$, if $u \in H^1([0,T])$ such that $u(0)=0$, and $v \in H^{\alpha/2}([0,T])$, then $( \prescript{}{0}{ \mathcal{D}}_{t}^{\,\,\alpha} u, v )_{\Omega} =  (\, \prescript{}{0}{ \mathcal{D}}_{t}^{\,\,\alpha/2} u \,,\, \prescript{}{t}{ \mathcal{D}}_{T}^{\,\,\alpha/2} v\, )_{\Omega}$, where $(\cdot , \cdot)_{\Omega}$ represents the standard inner product in $\Omega=[0,T]$. 
\end{lem}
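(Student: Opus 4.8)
The plan is to reduce the identity to two classical ingredients: the semigroup (index) law for the left Riemann--Liouville derivative, which splits $\prescript{}{0}{\mathcal{D}}_{t}^{\alpha}$ into two half-order operators, and a fractional integration-by-parts formula, which transfers one half-order operator from $u$ onto $v$ while converting a left derivative into a right one. Concretely, I would first write
\begin{equation*}
\prescript{}{0}{\mathcal{D}}_{t}^{\alpha} u = \prescript{}{0}{\mathcal{D}}_{t}^{\alpha/2}\big( \prescript{}{0}{\mathcal{D}}_{t}^{\alpha/2} u \big),
\end{equation*}
and then apply
\begin{equation*}
\big( \prescript{}{0}{\mathcal{D}}_{t}^{\alpha/2} w,\, v \big)_{\Omega} = \big( w,\, \prescript{}{t}{\mathcal{D}}_{T}^{\alpha/2} v \big)_{\Omega}
\end{equation*}
with $w = \prescript{}{0}{\mathcal{D}}_{t}^{\alpha/2} u$; composing the two lines yields the claim at once.

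For the first ingredient I would check the index law in the regime $\alpha/2 \in (0,\tfrac12)$. Because $u \in H^1([0,T])$ with $u(0)=0$, the Riemann--Liouville and Caputo derivatives of order $\alpha/2$ coincide by \eqref{Eq:  Caputo vs. Riemann}, so $w := \prescript{}{0}{\mathcal{D}}_{t}^{\alpha/2} u$ is well defined and lies in $L^2([0,T])$ (it inherits the needed regularity from $u' \in L^2$ through the fractional integral of order $1-\alpha/2$). The vanishing of $u$ at the origin is precisely what kills the boundary remainder that would otherwise obstruct the composition $\prescript{}{0}{\mathcal{D}}_{t}^{\alpha/2}\,\prescript{}{0}{\mathcal{D}}_{t}^{\alpha/2} = \prescript{}{0}{\mathcal{D}}_{t}^{\alpha}$, so the split above is exact rather than merely formal.

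For the second ingredient I would first establish the integration-by-parts identity for $w, v \in C_0^\infty$, writing each fractional derivative as a convolution against its power-law kernel and interchanging the order of integration by Fubini; the resulting double integral is manifestly the same whether the derivative acts on $w$ from the left or on $v$ from the right. I would then pass to $w \in L^2([0,T])$ and $v \in H^{\alpha/2}([0,T])$ by density, observing that since $\alpha/2<\tfrac12$ the space $H^{\alpha/2}([0,T])$ carries no trace and $C_0^\infty$ is dense in it, so no endpoint term for $v$ appears; continuity in the limit follows from the boundedness of $\prescript{}{t}{\mathcal{D}}_{T}^{\alpha/2}:H^{\alpha/2}([0,T])\to L^2([0,T])$ paired against $w\in L^2$. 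Combining the two ingredients gives $(\prescript{}{0}{\mathcal{D}}_{t}^{\alpha} u, v)_{\Omega} = (\prescript{}{0}{\mathcal{D}}_{t}^{\alpha/2} u, \prescript{}{t}{\mathcal{D}}_{T}^{\alpha/2} v)_{\Omega}$.

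The main obstacle is the careful accounting of boundary and remainder terms: both the index law and the integration-by-parts step a priori generate endpoint contributions, and the argument must show they vanish, which is exactly where $u(0)=0$ enters and where the subcritical order $\alpha/2<\tfrac12$ (no trace on $v$) is used. A clean alternative that handles all of these uniformly is the Fourier route of \cite{Li2009}: extend $u$ and $v$ by zero to $\mathbb{R}$ (legitimate since $u(0)=0$ and $\alpha/2<\tfrac12$ keep the extensions in the right spaces), so that the finite-interval derivatives agree with the whole-line ones, and apply Parseval. The statement then collapses to the pointwise multiplier identity $\overline{(-i\omega)^{\alpha/2}}=(i\omega)^{\alpha/2}$, so that $(i\omega)^{\alpha/2}\,\overline{(-i\omega)^{\alpha/2}}=(i\omega)^{\alpha}$; verifying that the branches of the fractional powers match is the only delicate point of this route.
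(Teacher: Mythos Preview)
The paper does not give its own proof of this lemma: it is stated with a bare citation to \cite{Li2009} and no argument is supplied. Your proposal is correct and in fact reproduces the two standard routes to this identity---the index-law plus adjoint-transfer argument, and the Fourier/Parseval argument on the zero-extension---the second of which is essentially the proof in the cited reference. So there is nothing to compare beyond noting that you have filled in what the paper outsources; your handling of the two potential pitfalls (the boundary term killed by $u(0)=0$ in the semigroup step, and the absence of a trace for $v$ since $\alpha/2<\tfrac12$) is exactly the right bookkeeping.
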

%
%
\begin{lem}
	\label{Lem: fractional integ-by-parts 1 and 2}
	\cite{kharazmi2017petrov}: Let $1 < \beta < 2$, $a$ and $b$ be arbitrary finite or infinite real numbers. Assume $u \in H^{\beta}(a,b)$ such that $u(a)=0$, also $\prescript{}{x}{\mathcal{D}}_{b}^{\beta/2}v$ is integrable in $\Omega = (a,b)$ such that $v(b) = 0$. Then, $( \prescript{}{a}{\mathcal{D}}_{x}^{\beta} u \,,\,v )_{\Omega} = ( \prescript{}{a}{\mathcal{D}}_{x}^{\beta/2} u \,,\,\prescript{}{x}{\mathcal{D}}_{b}^{\beta/2} v )_{\Omega}$.
\end{lem}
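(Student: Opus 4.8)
The plan is to factor the order-$\beta$ operator into two half-order operators of order $\mu := \beta/2 \in (\tfrac12,1)$ and then move one of them onto $v$ by a first-order fractional integration-by-parts of the same type as Lemma~\ref{Lem: left frac proj}. Before anything else I would check that the right-hand side is well defined. By the Riemann--Liouville--Caputo relation \eqref{Eq:  Caputo vs. Riemann} applied at order $\mu$, $\prescript{}{a}{\mathcal{D}}_{x}^{\mu} u = \frac{u(a)}{\Gamma(1-\mu)}(x-a)^{-\mu} + \prescript{C}{a}{\mathcal{D}}_{x}^{\mu} u$, and the singular term $(x-a)^{-\mu}$ is square-integrable on $(a,b)$ exactly when $\mu<\tfrac12$, which fails here. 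The hypothesis $u(a)=0$ is precisely what deletes this term, leaving $\prescript{}{a}{\mathcal{D}}_{x}^{\mu} u = \prescript{C}{a}{\mathcal{D}}_{x}^{\mu} u = \prescript{}{a}{\mathcal{I}}_{x}^{1-\mu} u'$; since $u \in H^{\beta}(a,b) \subset H^1(a,b)$ gives $u' \in L^2(\Omega)$ and $\prescript{}{a}{\mathcal{I}}_{x}^{1-\mu}$ is bounded on $L^2(\Omega)$, we get $\prescript{}{a}{\mathcal{D}}_{x}^{\mu} u \in L^2(\Omega)$, so the pairing $(\prescript{}{a}{\mathcal{D}}_{x}^{\mu} u, \prescript{}{x}{\mathcal{D}}_{b}^{\mu} v)_{\Omega}$ makes sense.

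Next I would establish the composition identity $\prescript{}{a}{\mathcal{D}}_{x}^{\beta} u = \prescript{}{a}{\mathcal{D}}_{x}^{\mu}\big(\prescript{}{a}{\mathcal{D}}_{x}^{\mu} u\big)$. This is the Riemann--Liouville semigroup law for $\mu+\mu=\beta$; the only possible obstruction is a boundary contribution proportional to $\big(\prescript{}{a}{\mathcal{D}}_{x}^{\mu-1} u\big)(a) = \big(\prescript{}{a}{\mathcal{I}}_{x}^{1-\mu} u\big)(a)$, which vanishes automatically for integrable $u$ because the defining integral over the degenerate interval $[a,a]$ is empty. Hence the two sides agree as $L^2(\Omega)$ functions, and writing $w := \prescript{}{a}{\mathcal{D}}_{x}^{\mu} u \in L^2(\Omega)$ we obtain $(\prescript{}{a}{\mathcal{D}}_{x}^{\beta} u, v)_{\Omega} = (\prescript{}{a}{\mathcal{D}}_{x}^{\mu} w, v)_{\Omega}$.

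Finally I would transfer the remaining order-$\mu$ derivative onto $v$. Since $\mu \in (0,1)$, I would invoke the standard order-$\mu$ fractional integration-by-parts $(\prescript{}{a}{\mathcal{D}}_{x}^{\mu} w, v)_{\Omega} = (w, \prescript{}{x}{\mathcal{D}}_{b}^{\mu} v)_{\Omega}$, which is proved by writing $\prescript{}{a}{\mathcal{D}}_{x}^{\mu} w = \frac{d}{dx}\,\prescript{}{a}{\mathcal{I}}_{x}^{1-\mu} w$, integrating by parts once in the classical sense (the surface term at $a$ vanishing because $\prescript{}{a}{\mathcal{I}}_{x}^{1-\mu} w$ vanishes at $a$, and the surface term at $b$ vanishing because $v(b)=0$), and then applying Fubini to turn the left fractional integral acting on $w$ into a right fractional integral acting on $v$; the integrability of $\prescript{}{x}{\mathcal{D}}_{b}^{\mu} v$ is exactly what legitimizes this last exchange. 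Substituting $w = \prescript{}{a}{\mathcal{D}}_{x}^{\mu} u = \prescript{}{a}{\mathcal{D}}_{x}^{\beta/2} u$ yields $(\prescript{}{a}{\mathcal{D}}_{x}^{\beta} u, v)_{\Omega} = (\prescript{}{a}{\mathcal{D}}_{x}^{\beta/2} u, \prescript{}{x}{\mathcal{D}}_{b}^{\beta/2} v)_{\Omega}$, as claimed.

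The step I expect to be the main obstacle is the careful bookkeeping of the two boundary contributions and of the integrability of every intermediate object, since Riemann--Liouville derivatives do not in general obey the semigroup law; the payoff is that the hypotheses are used exactly once each, with $u(a)=0$ guaranteeing $\prescript{}{a}{\mathcal{D}}_{x}^{\beta/2} u \in L^2(\Omega)$ (removing the non-$L^2$ singularity at $a$) and $v(b)=0$ killing the surface term at $b$. A more symmetric alternative avoids the composition law altogether: extend $u$ and $v$ by zero to $\mathbb{R}$ (legitimate in $H^{\beta}(\mathbb{R})$ once the relevant traces vanish, which $u(a)=v(b)=0$ and $\beta \neq n-\tfrac12$ provide in the applicable range), and apply Parseval together with the Fourier symbols $(i\omega)^{\beta}$ and $(-i\omega)^{\beta}$ of the left and right Riemann--Liouville derivatives, using the factorization $(i\omega)^{\beta} = (i\omega)^{\beta/2}\,\overline{(-i\omega)^{\beta/2}}$ to split the order-$\beta$ symbol into a left half-derivative paired against a right half-derivative.
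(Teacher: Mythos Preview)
Your primary approach is correct and matches the paper's strategy. The paper does not prove this lemma directly (it is cited from \cite{kharazmi2017petrov}), but its proof of the multi-dimensional generalization (Lemma~\ref{lem_generalize}, Appendix~\ref{Sec: App. proof of gen int by part}) follows exactly your three steps: invoke the composition identity $\prescript{}{a}{\mathcal{D}}_{x}^{\beta} u = \prescript{}{a}{\mathcal{D}}_{x}^{\beta/2}(\prescript{}{a}{\mathcal{D}}_{x}^{\beta/2} u)$, set $\bar u = \prescript{}{a}{\mathcal{D}}_{x}^{\beta/2} u$, integrate by parts once in the classical sense to move $\frac{d}{dx}$ onto $v$ (boundary terms killed by the homogeneous conditions), and then apply a Fubini swap to convert the left fractional integral on $\bar u$ into a right fractional integral on $v'$, producing $\prescript{}{x}{\mathcal{D}}_{b}^{\beta/2} v$.

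Your write-up is actually more careful than the paper's on two points: you explain \emph{why} $u(a)=0$ is needed (to delete the $(x-a)^{-\mu}$ term that would otherwise obstruct $\prescript{}{a}{\mathcal{D}}_{x}^{\beta/2} u \in L^2$ when $\mu>\tfrac12$), and you flag the semigroup law as the delicate step rather than taking it for granted. The paper simply asserts the composition identity by citing \cite{kharazmi2017petrov}. Your Fourier alternative is not used in the paper and is a genuinely different route; it is cleaner conceptually but requires more care with the zero-extension when $a$ or $b$ is finite, whereas the direct integration-by-parts argument works uniformly.
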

We generalize Lemma \ref{Lem: fractional integ-by-parts 1 and 2} to the two-sided $(1+d)$-dimensional case (see Appendix \ref{Sec: App. proof of gen int by part} for proof).
\begin{lem}
	\label{lem_generalize}
	Let $1<\beta_j<2$ for $j=1,2,\cdots,d$, and $u,v \in  \mathcal{X}_d$. Then,  
	\begin{align*}
	\big(\prescript{}{a_j}{\mathcal{D}}_{x_j}^{\beta_j} u, v\big)_{\Lambda_d}=\big(\prescript{}{a_j}{\mathcal{D}}_{x_j}^{\frac{\beta_j}{2}} u, \prescript{}{x_j}{\mathcal{D}}_{b_j}^{\frac{\beta_j}{2}} v\big)_{\Lambda_d},
	\qquad
	\big(\prescript{}{x_j}{\mathcal{D}}_{b_j}^{\beta_j} u, v\big)_{\Lambda_d}=\big(\prescript{}{x_j}{\mathcal{D}}_{b_j}^{\frac{\beta_j}{2}} u, \prescript{}{a_j}{\mathcal{D}}_{x_j}^{\frac{\beta_j}{2}} v\big)_{\Lambda_d}.
	\end{align*}
	%
	%
\end{lem}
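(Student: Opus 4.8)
The plan is to reduce the two $(1+d)$-dimensional identities to the one-dimensional fractional integration-by-parts result of Lemma~\ref{Lem: fractional integ-by-parts 1 and 2} by Fubini's theorem, and then to pass from smooth compactly supported functions to arbitrary elements of $\mathcal{X}_d$ by density. First I would establish the left-sided identity for $u,v \in C^{\infty}_{0}(\Lambda_d)$. Since $\prescript{}{a_j}{\mathcal{D}}_{x_j}^{\beta_j}$ differentiates only in the single variable $x_j$, Fubini's theorem lets me perform the $x_j$-integration first while freezing the remaining $d-1$ spatial coordinates. For almost every fixed value of those coordinates, the traces of $u$ and $v$ are smooth functions of $x_j$ vanishing at $x_j=a_j$ and $x_j=b_j$, so the hypotheses of Lemma~\ref{Lem: fractional integ-by-parts 1 and 2} ($u(a_j)=0$, $v(b_j)=0$, and integrability of $\prescript{}{x_j}{\mathcal{D}}_{b_j}^{\beta_j/2}v$) are met. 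Applying the one-dimensional formula pointwise in the frozen coordinates shifts half of the derivative onto $v$, and re-integrating over the remaining coordinates gives
\[
\big(\prescript{}{a_j}{\mathcal{D}}_{x_j}^{\beta_j} u, v\big)_{\Lambda_d}=\big(\prescript{}{a_j}{\mathcal{D}}_{x_j}^{\frac{\beta_j}{2}} u, \prescript{}{x_j}{\mathcal{D}}_{b_j}^{\frac{\beta_j}{2}} v\big)_{\Lambda_d}.
\]

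Next I would extend this from $C^{\infty}_{0}(\Lambda_d)$ to all of $\mathcal{X}_d$. Because smooth functions with compact support in $\Lambda_d$ are dense in $\mathcal{X}_d$ (the spaces $\mathcal{X}_j$ being defined as closures of $C^{\infty}_{0}$), it suffices to verify that the right-hand bilinear form is continuous on $\mathcal{X}_d \times \mathcal{X}_d$. This follows from Cauchy--Schwarz together with Lemma~\ref{space norm 1}, which controls the half-order derivatives by the full norm, so that
\[
\big\vert\big(\prescript{}{a_j}{\mathcal{D}}_{x_j}^{\frac{\beta_j}{2}} u, \prescript{}{x_j}{\mathcal{D}}_{b_j}^{\frac{\beta_j}{2}} v\big)_{\Lambda_d}\big\vert \le \Vert \prescript{}{a_j}{\mathcal{D}}_{x_j}^{\frac{\beta_j}{2}} u\Vert_{L^2(\Lambda_d)}\,\Vert \prescript{}{x_j}{\mathcal{D}}_{b_j}^{\frac{\beta_j}{2}} v\Vert_{L^2(\Lambda_d)}\le \Vert u \Vert_{\mathcal{X}_d}\,\Vert v \Vert_{\mathcal{X}_d}.
\]
Choosing sequences $u_n,v_n \in C^{\infty}_{0}(\Lambda_d)$ converging to $u,v$ in $\mathcal{X}_d$ and passing to the limit then yields the identity for all $u,v\in\mathcal{X}_d$, with the left-hand side understood in the limiting (weak) sense supplied by the bounded right-hand form. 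The right-sided identity follows by the identical argument with the roles of the left and right operators interchanged, invoking the right-sided counterpart of Lemma~\ref{Lem: fractional integ-by-parts 1 and 2} and the boundary conditions $u(b_j)=v(a_j)=0$.

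The main obstacle I anticipate is the careful justification of applying the one-dimensional lemma after freezing the transverse coordinates: I must ensure that the vanishing-trace and integrability hypotheses hold for almost every value of those coordinates, and that the resulting one-dimensional pairings are measurable and integrable in the frozen variables so that both the initial use of Fubini and the final re-integration are legitimate. Proving the smooth compactly supported case first sidesteps exactly these measurability and absolute-convergence subtleties, since all the relevant integrals are then finite and the interchange of integration order is unproblematic; this is why I would treat $C^{\infty}_{0}(\Lambda_d)$ before invoking density to reach the general statement.
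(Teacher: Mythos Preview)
Your proposal is correct, but it differs in route from the paper's own proof. The paper does not invoke Lemma~\ref{Lem: fractional integ-by-parts 1 and 2} via Fubini; instead it first uses the semigroup decomposition $\prescript{}{a_j}{\mathcal{D}}_{x_j}^{\beta_j} u=\prescript{}{a_j}{\mathcal{D}}_{x_j}^{\beta_j/2}\bigl(\prescript{}{a_j}{\mathcal{D}}_{x_j}^{\beta_j/2} u\bigr)$, sets $\bar u=\prescript{}{a_j}{\mathcal{D}}_{x_j}^{\beta_j/2} u$, writes out the Riemann--Liouville integral for $\prescript{}{a_j}{\mathcal{D}}_{x_j}^{\beta_j/2}\bar u$ explicitly, and then performs a bare-hands integration by parts in the $x_j$ variable inside the $\Lambda_d$-integral, using the homogeneous boundary conditions to drop the boundary term and a Leibniz/Fubini manipulation to identify the result with $(\bar u,\prescript{}{x_j}{\mathcal{D}}_{b_j}^{\beta_j/2} v)_{\Lambda_d}$. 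In effect the paper reproves the one-dimensional lemma inside the multi-dimensional integral rather than citing it. Your approach is more modular and arguably cleaner: you reuse the existing one-dimensional result and add a density step (from $C^{\infty}_0(\Lambda_d)$ to $\mathcal{X}_d$) that the paper omits entirely, together with an explicit acknowledgment that the left-hand pairing is only defined in the limiting sense for general $u\in\mathcal{X}_d$. The paper's argument, by contrast, is formally self-contained but does not address regularity or density at all.
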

%

%
\section{Problem Definition}
\label{Sec: Problem Definition}
%
Let $\Omega = (0,T] \times (a_1,b_1) \times (a_2,b_2) \times \cdots \times (a_d,b_d)$ be the computational domain for some positive integer $d$. We define $u(t,\textbf{x};\textbf{q}): \Omega \times Q \rightarrow \mathbb{R}$, where $\textbf{q} = \{ \alpha, \beta_1, \beta_2 , \cdots , \beta_d , k_1, k_2, \cdots , k_d \}$ is the vector of model parameters containing the fractional indices and model coefficients, and $Q = [0,1] \times [1,2]^d \times \mathbb{R}_{+}^{d}$ is the space of parameters. Thus, for any $\textbf{q} \in Q$, the transport field $u(t,\textbf{x};\textbf{q}): \Omega \rightarrow \mathbb{R}$. We consider the FPDE of strong form $\mathcal{L}^q(u) = f$, subject to Dirichlet initial and boundary conditions, where $\mathcal{L}$ is a linear two-sided fractional operator, given as follows
\begin{align}
	\label{Eq: FPDE}
	\prescript{}{0}{\mathcal{D}}_{t}^{\alpha} u(t,\textbf{x};\textbf{q}) 
	& - \sum_{j=1}^{d} \, k_{j} \,\left[ \prescript{}{a_j}{\mathcal{D}}_{x_j}^{\beta_j}
	+ \prescript{}{x_j}{\mathcal{D}}_{b_j}^{\beta_j} \right]
	u(t,\textbf{x};\textbf{q}) 
	= f(t,\textbf{x};\textbf{q}) ,
	\\
	\label{Eq: IC}
	& u \arrowvert_{t=0} = 0 ,  
	\\
	\label{Eq: BC}
	& u \arrowvert_{x=a_j} = u \arrowvert_{x=b_j} = 0 , 
\end{align}
in which $\alpha \in (0,1)$, $\beta_j \in (1,2)$, $k_{j}$ are real positive constant coefficients, and the fractional derivatives are taken in the Riemann-Liouville sense.

%
\subsection{\textbf{Weak Formulation}}
\label{Sec: FPDE Weak System}
%

For any set of model parameter $q$, we obtain the weak system, i.e. the variational form of the problem \eqref{Eq: FPDE} subject to the given initial/boundary conditions, by multiplying the equation with proper test functions and integrate over the whole computational domain $\Omega$. Therefore, using Lemmas \ref{Lem: left frac proj}-\ref{lem_generalize}, the bilinear form can be written as
\begin{align}
%
\label{Eq: bilinear form}
a(u,v)
=(\prescript{}{0}{\mathcal{D}}_{t}^{\frac{\alpha}{2}}\, u, \prescript{}{t}{\mathcal{D}}_{T}^{\frac{\alpha}{2}}\, v )_{\Omega} 
-\sum_{j=1}^{d} 
k_{j} \Big[ ( \prescript{}{a_j}{\mathcal{D}}_{x_j}^{\frac{\beta_j}{2}}\, u,\, \prescript{}{x_j}{\mathcal{D}}_{b_j}^{\frac{\beta_j}{2}}\, v )_{\Omega}
+ ( \prescript{}{x_j}{\mathcal{D}}_{b_j}^{\frac{\beta_j}{2}}\, u , \, \prescript{}{a_j}{\mathcal{D}}_{x_j}^{\frac{\beta_j}{2}} v)_{\Omega}
\Big] ,
\end{align}
and thus, by letting $\tilde{U}$ and $\tilde{V}$ be the proper solution/test spaces, the problem reads as: find $u \in \tilde{U}$ such that
\begin{align}
\label{Eq: general weak form FPDE}
a(u,v) = (f,v)_{\Omega}, \quad \forall v \in \tilde{V} .
\end{align}
%

%
\section{Fractional Sensitivity Equation Method (FSEM)}
\label{Sec: FSEM}
%
We define the sensitivity coefficients as the partial derivative of transport field $u$ with respect to the model parameters $q_i$, i.e. 
\begin{align}
S_{u , q_i} = \frac{\partial \, u}{\partial \, q_i}, \quad i=1,2,\cdots,2d+1,
\end{align}
assuming that the partial derivative is well-defined. 
To obtain the governing equation of evolution of sensitivity fields, i.e. FSEs, we first take the partial derivative of left- and right-sided fractional derivative \eqref{Eq: left RL derivative} and \eqref{Eq: right RL derivative} with respect to their orders. Therefore, by letting $\sigma \in (n-1,n]$, $x\in[a,b]$, $\mathcal{A}_n(\sigma) =\Gamma(n-\sigma) \frac{\partial}{\partial \sigma} \frac{1}{\Gamma(n-\sigma)} $, we have
\begin{align}
\label{Eq: Sensitivity of left sided fractional}
&\frac{\partial}{\partial \sigma} (\prescript{}{a}{\mathcal{D}}_{x}^{\sigma} u) = 
\prescript{}{a}{\mathcal{D}}_{x}^{\sigma} S_{u,\sigma}
+ \mathcal{A}_n(\sigma) \prescript{}{a}{\mathcal{D}}_{x}^{\sigma} u 
-\prescript{LP}{a}{\mathcal{D}}_{x}^{\sigma} u,
\\
\label{Eq: Sensitivity of right sided fractional}
&\frac{\partial}{\partial \sigma} (\prescript{}{x}{\mathcal{D}}_{b}^{\sigma} u) = 
\prescript{}{x}{\mathcal{D}}_{b}^{\sigma} S_{u,\sigma}
+ \mathcal{A}_n(\sigma) \prescript{}{x}{\mathcal{D}}_{b}^{\sigma} u 
- \prescript{LP}{x}{\mathcal{D}}_{b}^{\sigma} u.
\end{align}
The pre-super script LP stands for the \textit{Log-Pow integro-differential operator}, given in \eqref{Eq: left sided LP kernel derivative}-\eqref{Eq: right sided LP kernel derivative Caputo}, which we introduce here, for the first time in the context of FSEs. 

\begin{rem}
In the sequel, we only use the $RL-LP$ operator and thus, for the sake of simplicity, we drop the pre-super script $RL$ and $C$ and only use them when needed to distinguish between the two senses of derivatives.	
\end{rem}

%
%
\begin{figure}[t]
	\centering
	\centering
	\includegraphics[width=1\linewidth]{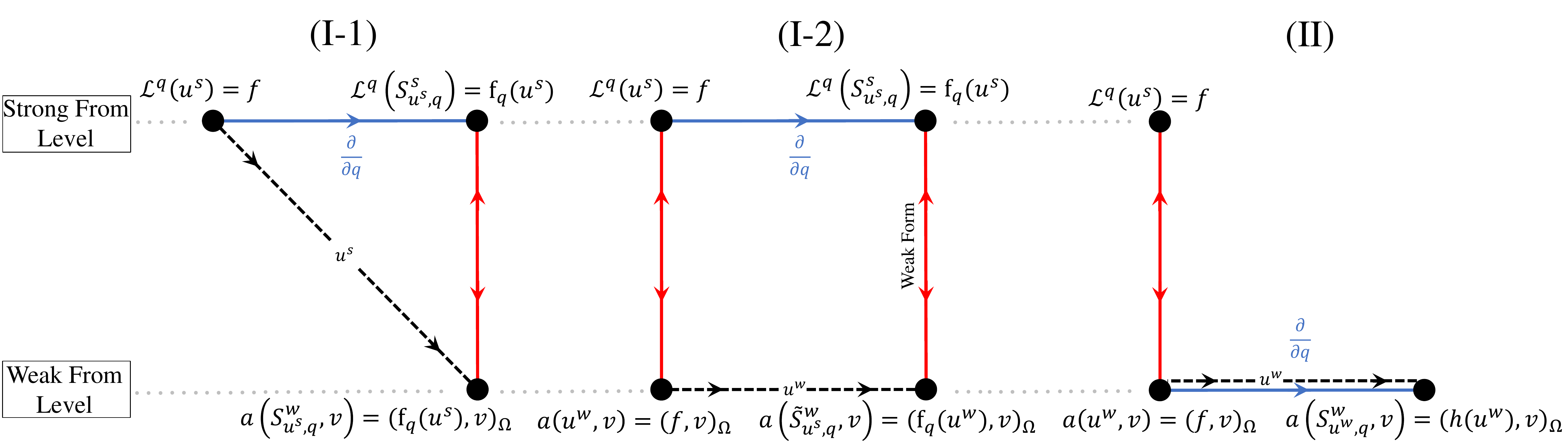}
	\caption{Schematic of strategies in deriving the weak form of FSEs. (I-1): first take $\frac{\partial}{\partial q}$ and then obtain the weak formulation, fed by strong solution $u^s$. (I-2): first take $\frac{\partial}{\partial q}$ and then obtain the weak formulation, fed by weak solution $u^w$. (II): first obtain the weak formulation and then take $\frac{\partial}{\partial q}$, fed by weak solution $u^w$.}
	\label{Fig: Schematic of FSE Derivationa}
\end{figure}
%
%

We derive the adjoint FSEs by pursuing two different strategies I and II, shown schematically in Fig. \ref{Fig: Schematic of FSE Derivationa}. We adopt the notation of $u^s$ and $u^w$ to distinguish the solution to strong and weak form of the problem for ease of describing the two following strategies. In the first strategy, we first take the partial derivative of FPDE with respect to the model parameters $q$, and then, obtain the weak form of problem. If $u^s$ is known, then we follow I-1 (left figure), otherwise we formulate and solve the weak form of FPDE to obtain weak solution $u^w$ and follow I-2 (middle figure). 
\begin{align}
\label{Eq: FSE I-1}
\text{I-1:} \quad
\mathcal{L}^q(u^s) = f \,\,
{\color{blue}\xrightarrow[]{\,\, \frac{\partial}{\partial q} \,\,}} \,\,
\mathcal{L}^q(S^s_{u^s,q}) = \text{f}_q(u^s) \,\, 
{\color{red}\xrightarrow[]{\text{weak form}}} \,\, 
a(S^w_{u^s,q} , v) = (\text{f}_q(u^s) , v)_{\Omega}
\\
\label{Eq: FSE I-2}
\text{I-2:} \quad
\mathcal{L}^q(u^s) = f \,\,
{\color{blue}\xrightarrow[]{\,\, \frac{\partial}{\partial q} \,\,}} \,\,
\mathcal{L}^q(S^s_{u^s,q}) = \text{f}_q(u^s) \,\, 
{\color{red}\xrightarrow[]{\text{weak form}}} \,\, 
a(\tilde{S}^w_{u^s,q} , v) = (\text{f}_q(u^w) , v)_{\Omega}
\end{align}
Via proper construction of the corresponding subspaces, we discretize and solve $a(S^w_{u^s,q} , v) = (\text{f}_q(u^s) , v)_{\Omega}$ and $ a(\tilde{S}^w_{u^s,q} , v) = (\text{f}_q(u^w) , v)_{\Omega} $ in I-1 and I-2, respectively. We can show that $\Vert \tilde{S}^w_{u^s,q}  - S^w_{u^s,q}  \Vert_{L^2} \rightarrow 0$ as $u^w \rightarrow u^s$ by stability/error analysis of employed numerical scheme, where the solution space has the extra regularity required by the Log-Pow integro-differential operator in $\text{f}_q$.
\begin{rem}
	The solution to strong form of FPDE, i.e. $u^s$ can be analytically/numerically computed (by Laplace transform and finite difference method for example), or may be available as prior experimental data, and thus, can be fed directly to construct $\text{\normalfont f}_q$ in FSEs (see left sub-figure in Fig. \ref{Fig: Schematic of FSE Derivationa}). This is used in parameter estimation for model construction, section \ref{Sec: FSEM Model Construction}. 
\end{rem}
In the second strategy, we first obtain the weak form of FPDE, and then take the partial derivative with respect to the model parameters $q$. In this case, we procure $(h(u^w),v)$ as the right hand side of weak formulation, which is fed by the weak solution $u^w$. In this case, the function $h$ requires less regularity for the solution space due to the Log-Pow integro-differential operator, since the order of kernel is less compare to the first strategy.
\begin{align}
\label{Eq: FSE II}
\text{II:} \quad
\mathcal{L}^q(u^s) = f \,\,
{\color{red}\xrightarrow[]{\text{weak form}}} \,\, 
a(u^w , v) = (f , v)_{\Omega} \,\,
{\color{blue}\xrightarrow[]{\,\, \frac{\partial}{\partial q} \,\,}} \,\,
a(S^w_{u^w,q} , v) = (h(u^w) , v)_{\Omega}
\end{align}

In the next subsection, we adopt the two strategies to derive adjoint FSE to a fractional initial value problem, where we show the corresponding right-hand-side and the imposed extra regularity in each case. We then, extend the derivation to the case FPDE, in which we adopt strategy I-2.

%
\subsection{\textbf{FSEM (FIVP)}}
\label{Sec: FSEM Adjoint FSE FIVP}
%

Let $\Omega = (0,T]$ be the computational time domain and define $u(t;\alpha): \Omega \times (0,1) \rightarrow \mathbb{R}$. We consider the case of fractional initial value problem (FIVP) by letting the coefficients $k_j$'s to be zero in \eqref{Eq: FPDE}, and thus obtain the following FIVP, subject to Dirichlet initial condition, as $\prescript{}{0}{\mathcal{D}}_{t}^{\alpha} u(t;\alpha) = f(t;\alpha)$, $u(0) = 0$. By taking the partial derivative with respect to $\alpha$, we obtain the adjoint FSE in the strong form as $\prescript{}{0}{\mathcal{D}}_{t}^{\alpha} S_{u,\alpha} = \text{f}_{\alpha}$, $S_{u,q}\vert_{(t=0)} = 0$, where $\text{f}_{\alpha} = S_{f,\alpha} - \mathcal{A}_1(\alpha)  \,\, \prescript{}{0}{\mathcal{D}}_{t}^{\alpha} u + \prescript{LP}{0}{\mathcal{D}}_{t}^{\alpha} u$. Following strategy I, we obtain 
\begin{align}
\label{Eq: weak FSE FIVP I}
&a({S}_{u,\alpha} , v)_{\Omega} = (\text{f}_{\alpha} , v)_{\Omega} ,
\\
&(\text{f}_{\alpha},v)_{\Omega}  =  
(S_{f,\alpha},v)_{\Omega} 
- \mathcal{A}_1(\alpha)  \,\, (\prescript{}{0}{\mathcal{D}}_{t}^{\frac{\alpha}{2}} u , \prescript{}{t}{\mathcal{D}}_{T}^{\frac{\alpha}{2}} v)_{\Omega} 
+ (\prescript{LP}{0}{\mathcal{D}}_{t}^{\alpha} u,v)_{\Omega}.
\end{align}
In this case, constructing the right-hand-side imposes extra strong regularity of $\Vert \prescript{LP}{0}{\mathcal{D}}_{t}^{\alpha} u \Vert_{L^2} < \infty$ to the solution of FIVP. However, by following startegy II, we obtain 
\begin{align}
\label{Eq: weak FSE FIVP II}
&a({S}_{u,\alpha} , v)_{\Omega} = h(v),
\\
&h(v) = 
(S_{f,\alpha},v)_{\Omega} 
+ (f,S_{v,\alpha})_{\Omega} 
- \mathcal{A}_1(\frac{\alpha}{2})  \,\, (\prescript{}{0}{\mathcal{D}}_{t}^{\frac{\alpha}{2}} u , \prescript{}{t}{\mathcal{D}}_{T}^{\frac{\alpha}{2}} v)_{\Omega} 
-(\prescript{}{0}{\mathcal{D}}_{t}^{\frac{\alpha}{2}} u , \prescript{}{t}{\mathcal{D}}_{T}^{\frac{\alpha}{2}} S_{v,\alpha})_{\Omega} 
\\
\nonumber
& \quad\quad
+\frac{1}{2} (\prescript{LP}{0}{\mathcal{D}}_{t}^{\frac{\alpha}{2}} u , \prescript{}{t}{\mathcal{D}}_{T}^{\frac{\alpha}{2}} v)_{\Omega} 
+\frac{1}{2} (\prescript{}{0}{\mathcal{D}}_{t}^{\frac{\alpha}{2}} u , \prescript{LP}{t}{\mathcal{D}}_{T}^{\frac{\alpha}{2}} v)_{\Omega},
\end{align}
where, the function $h$ imposes extra weak regularity of $\Vert \prescript{LP}{0}{\mathcal{D}}_{t}^{\frac{\alpha}{2}} u \Vert_{L^2} < \infty$ and $\Vert \prescript{LP}{t}{\mathcal{D}}_{T}^{\frac{\alpha}{2}} v \Vert_{L^2} < \infty$ to the solution. We computationally study and make sure that the solution to \eqref{Eq: weak FSE FIVP I} converges to \eqref{Eq: weak FSE FIVP II}.

%
\subsection{\textbf{FSEM (FPDE)}}
\label{Sec: FSEM Adjoint FSE FPDE}
%
We consider the problem \eqref{Eq: FPDE}-\eqref{Eq: BC}. We adopt strategy I-2 and derive the adjoint FSEs and their corresponding weak form, where to construct the right-hand-side, we also obtain the weak form of FPDE. Thus, we solve a coupled system of FPDE and FSEs. By taking the partial derivatives of \eqref{Eq: FPDE} with respect to model parameters $q_i, \, i=1,2,\cdots,2d+1$, we obtain the corresponding adjoint FSEs as
\begin{align}
\label{Eq: FSE}
\boxed{
\mathcal{L}^q \,\,  S_{u,\alpha} = \text{f}_{\alpha} \,\, , \quad
\mathcal{L}^q \,\, S_{u,\beta_j} = \text{f}_{\beta_j} \,\, , \quad
\mathcal{L}^q \,\, S_{u,k_j} = \text{f}_{k_j} \,\, , \quad j = 1,2,\cdots,d  \,\, , }
\end{align}
in which
\begin{align}
\label{Eq: Fractional Operator}
\mathcal{L}^q (\cdot) & = \prescript{}{0}{\mathcal{D}}_{t}^{\alpha} (\cdot)
- \sum_{j=1}^{d} \, k_{j} \,\left[ \prescript{}{-a_j}{\mathcal{D}}_{x_j}^{\beta_j}
+ \prescript{}{x_j}{\mathcal{D}}_{b_j}^{\beta_j} \right] (\cdot)
\\
\label{Eq: FSE-temporal force function}
\text{f}_{\alpha} & =
S_{f,\alpha}
- \mathcal{A}_1(\alpha) \prescript{}{0}{\mathcal{D}}_{t}^{\alpha} u 
+ \prescript{LP}{0}{\mathcal{D}}_{t}^{\alpha} u
\\
\text{f}_{\beta_j} & =
S_{f,\beta_j}
+ k_{j} \, \mathcal{A}_2(\beta_j) \,\left[  \prescript{}{a_k}{\mathcal{D}}_{x_j}^{\beta_j} + \prescript{}{x_j}{\mathcal{D}}_{b_k}^{\beta_j} \right] u
- k_{j} \, \left[  \prescript{LP}{a_k}{\mathcal{D}}_{x_j}^{\beta_j} + \prescript{LP}{x_j}{\mathcal{D}}_{b_k}^{\beta_j} \right] u,
%
\\
\label{Eq: FSE-coeff force function}
\text{f}_{k_j} & =
	S_{f,k_j}
+ \,\left[ \prescript{}{-a_j}{\mathcal{D}}_{x_j}^{\beta_j}
+ \prescript{}{x_j}{\mathcal{D}}_{b_j}^{\beta_j} \right] u.
%
\end{align}
Moreover, by taking the partial derivative of initial and boundary conditions \eqref{Eq: IC} and \eqref{Eq: BC}, respectively, with respect to model parameters, we obtain the following conditions for $i=1,2,\cdots,2d+1$, as
\begin{align}
\label{Eq: SE-ICBC}
S_{u,q_i} \big\vert_{t=0} = \frac{\partial S_{u,q_i} }{\partial t} \big\vert_{t=0} = 0 , \quad 
S_{u,q_i} \big\vert_{x=a_j} = S_{u,q_i} \big\vert_{x=b_j} = 0 , \,\, j=1,2,\cdots,d.
\end{align}
%

%
\subsection{\textbf{Mathematical Framework: Coupled System of The FPDE and Derived FSEs}}
\label{Sec: FSEM Mathematical Framework}
%
We extend the solution/test spaces, defined in \eqref{Eq: solution test space} by imposing the ``extra regularities" due to the right-hand-side of adjoint FSEs \eqref{Eq: FSE}, and define the proper underlying spaces for solving the coupled system of adjoint FSEs and FPDE.

%
\subsection*{\textbf{Solution/Test Spaces}}
\label{Sec: Sol Test Space FSE}
%
Let 
\begin{align*}
\mathcal{H}^{\frac{\beta_j}{2}}_{0}(\Lambda_j) = 
\Big\lbrace 
u \in H^{\frac{\beta_j}{2}}_{0}(\Lambda_j) 
\, \Big| \, 
\sqrt{\Vert \prescript{LP}{a_j}{\mathcal{D}}_{x_j}^{\beta_j} u \Vert^2_{L^2(\Lambda_j)} + \Vert \prescript{LP}{x_j}{\mathcal{D}}_{b_j}^{\beta_j} u \Vert^2_{L^2(\Lambda_j)}} < \infty 
\Big\rbrace, \,\, 
j=1,2,\cdots,d,
\end{align*}
associated with the norm $ \Vert \cdot \Vert_{{^c}H^{\frac{\beta_j}{2}}_{}(\Lambda_j)}$. We define $\mathcal{X}_1 = \mathcal{H}^{\frac{\beta_1}{2}}_{0}(\Lambda_1)$, and accordingly, $\mathcal{X}_j, \, j=2,\cdots,d$ as
\begin{align}
\mathcal{X}_2 &= \mathcal{H}^{\frac{\beta_2}{2}}_0 ((a_2,b_2); L^2(\Lambda_1) ) \cap L^2((a_2,b_2); \mathcal{X}_1),
\\
&\vdots
\nonumber
\\
\mathcal{X}_d &= \mathcal{H}^{\frac{\beta_d}{2}}_0 ((a_d,b_d); L^2(\Lambda_{d-1}) ) \cap L^2((a_d,b_d); \mathcal{X}_{d-1}),
\end{align}
associated with the similar norm $\Vert \cdot \Vert_{\mathcal{X}_d} $. Thus, we define the corresponding ``solution space" $\tilde{U}$ and ``test space" $\tilde{V}$, respectively, as
\begin{align}
\label{Eq: solution test space FSE}
\tilde{U}  = \prescript{l}{0}{\mathcal{H}}^{\frac{\alpha}{2}}\Big(I; L^2(\Lambda_d) \Big) \cap L^2(I; \mathcal{X}_d),
\quad
\tilde{V}  = \prescript{r}{0}{\mathcal{H}}^{\frac{\alpha}{2}}\Big(I; L^2(\Lambda_d)\Big) \cap L^2(I; \mathcal{X}_d),
\end{align}
endowed with similar norms \eqref{Eq: solution space norm} and \eqref{Eq: test space norm}, where 
\begin{align*}
\prescript{l}{0}{\mathcal{H}}^{\frac{\alpha}{2}} \Big(I; L^2(\Lambda_d) \Big) = 
\Big{\{} u \, \big|\, \Vert u(t,\cdot) \Vert_{L^2(\Lambda_d)} \in H^{\frac{\alpha}{2}}(I),
\Vert \prescript{LP}{0}{\mathcal{D}}_{t}^{\alpha} u \Vert_{L^2(I)} < \infty ,
u\vert_{t=0}=u\vert_{x=a_j}=u\vert_{x=b_j}=0,\, j=1,2,\cdots,d  \Big{\}},
\\
\prescript{r}{0}{\mathcal{H}}^{\frac{\alpha}{2}} \Big(I; L^2(\Lambda_d) \Big) = 
\Big{\{} v \,\big|\, \Vert v(t,\cdot) \Vert_{L^2(\Lambda_d)} \in H^{\frac{\alpha}{2}}(I), 
\Vert \prescript{LP}{t}{\mathcal{D}}_{T}^{\alpha} u \Vert_{L^2(I)} < \infty ,
v\vert_{t=T}=v\vert_{x=a_j}=v\vert_{x=b_j}=0,\, j =1,2,\cdots,d  \Big{\}},
\end{align*}
equipped with norms $\Vert u \Vert_{\prescript{l}{}H^{\frac{\alpha}{2}}(I; L^2(\Lambda_d))}$ and $\Vert u \Vert_{\prescript{r}{}H^{\frac{\alpha}{2}}(I; L^2(\Lambda_d))}$, respectively.

%
\subsection*{\textbf{Weak Formulation}}
\label{Sec: Weak System}
%
Since derived FSEs \eqref{Eq: FSE} preserve the structure of FPDE \eqref{Eq: FPDE}, the bilinear form of corresponding weak formulation takes the same form as \eqref{Eq: bilinear form}. Therefore, By letting $\tilde{U}$ and $\tilde{V}$ be the solution/test spaces, defined in \eqref{Eq: solution test space FSE}, the problem reads as: find $u \in \tilde{U}$ such that
\begin{align}
\label{Eq: general weak form FPDE coupled}
a(u,v) = (f,v)_{\Omega}, \quad \forall v \in \tilde{V} ,
\end{align}
and find $S_{u,q_i} \in U, \, 1=1,2,\cdots,2d+1$ such that
\begin{align}
\label{Eq: general weak form FSE}
a(S_{u,q_i},w) = (\text{f}_{q_i},w)_{\Omega} \quad \forall w \in V ,
\end{align}
where $U$ and $V$ are defined in \eqref{Eq: solution test space}.

%
%
\subsection{\textbf{Petrov-Galerkin Spectral Method}}
\label{Sec: Implementation}
%
%
We define the following finite dimensional solution and test spaces. We employ Legendre polynomials $\phi_{m_j}(\xi), \, j=1,2,\cdots,d$, and Jacobi poly-fractonomial of first kind $\psi^{\tau}_n(\eta)$ \cite{zayernouri2015tempered,Zayernouri2013}, as the spatial and temporal bases, respectively, given in their corresponding standard domain as
\begin{align}
\label{Eq: Spatial Basis}
\phi^{}_{m_j} ( \xi )  & =  \sigma_{m_j} \big{(} P_{m_j+1} (\xi) - P_{m_j-1} (\xi)\big{)},  \quad  \xi \in [-1,1]  \qquad m_j=1,2,\cdots ,
\\
\label{Eq: Temporal Basis}
\psi^{\tau}_n(\eta) & = {\sigma}_{n} \prescript{(1)}{}{ \mathcal{P}}_{n}^{\,\,\tau}(\eta) = {\sigma}_{n} (1+\eta)^{\tau} P_{n-1}^{-\tau, \tau} (\eta), \quad \eta\in [-1,1]  \quad n=1,2,\cdots ,
\end{align}
in which $\sigma_{m_j} = 2 + (-1)^{m_j}$. Therefore, by performing affine mappings $\eta = 2\frac{t}{T}-1$ and $\xi = 2\frac{x-a_j}{b_j-a_j} -1$ from the computational domain to the standard domain, we construct the solution space $U_N$ as
\begin{align}
\label{Eq: Solution Space :PG}
U_N = 
span \, \Big\{ \,\,   
\Big( \psi^{\,\tau}_n \circ \eta \Big) ( t ) \,\,
\prod_{j=1}^{d} \Big( \phi^{}_{m_j} \circ \xi \Big)  (x_j) \,\,
: n = 1,2, \cdots, \mathcal{N}, \,\, m_j= 1,2, \cdots, \mathcal{M}_j
\,\, \Big\}.
\end{align}
We note that the choice of temporal and spatial basis functions naturally satisfy the initial and boundary conditions, respectively. The parameter $\tau$ in the temporal basis functions plays a role of fine tunning parameter, which can be chosen properly to capture the singularity of exact solution.

Moreover, we employ Legendre polynomials $\Phi_{r_j}(\xi), \, j=1,2,\cdots,d$, and Jacobi poly-fractonomial of second kind $\Psi^{\tau}_k(\eta)$, as the spatial and temporal test functions, respectively, given in their corresponding standard domain as
\begin{align}
\label{Eq: Spatial Test}
\Phi_{r_j} ( \xi )  & =  \widetilde{\sigma}_{r_j} \big{(} P_{r_j+1}^{} (\xi) - P_{r_j-1}^{} (\xi)\big{)},  \quad  \xi \in [-1,1]  \qquad r_j =1,2,\cdots ,
\\
\label{Eq: Temporal Test}
\Psi^{\tau}_k(\eta) & = \widetilde{\sigma}_{k} \prescript{(2)}{}{ \mathcal{P}}_{k}^{\,\,\tau}(\eta) = \widetilde{\sigma}_{k} (1-\eta)^{\tau}\, P_{k-1}^{\tau,-\tau} (\eta), \quad \eta\in [-1,1]  \quad k=1,2,\cdots,
\end{align}
where $ \widetilde{\sigma}_{r_j} = 2\,(-1)^{r_j} + 1$. Therefore, by similar affine mapping we construct the test space $V_N$ as
\begin{align}
\label{Eq: Test Space: PG}
V_N = span \, \Big\{  \,\,
\Big(\Psi^{\tau}_k \circ \eta\Big)(t) \,\,
\prod_{j=1}^{d} \Big( \Phi^{}_{r_j} \circ \xi_j\Big)(x_j) \,\,
: k = 1,2, \cdots, \mathcal{N}, \,\, r_j= 1,2, \cdots, \mathcal{M}_j
\,\, \Big\}.
\end{align}
%
We can show that our choice of basis/test functions satisfy the extra regularity imposed by the Log-Pow integro-differential operator.
Thus, since $U_N \subset \tilde{U} \subset U$ and $V_N \subset \tilde{V} \subset V$, the problems \eqref{Eq: general weak form FPDE coupled} and \eqref{Eq: general weak form FSE} read as: find $u_N \in U_N$ such that
\begin{align}
\label{Eq: PG method FPDE}
a_h(u_N,v_N) = l(v_N), \quad \forall v_N \in V_N,
\end{align}
where $l(v_N) = (f,v_N)$; and find $Su_N \in U_N$ such that
\begin{align}
\label{Eq: PG method}
a_h(Su_N,w_N) = l(w_N), \quad \forall w_N \in V_N,
\end{align}
where $l(w_N) = (\text{f}_{q_i},w_N)$. Also, the discrete bilinear form $a_h(u_N,v_N)$ can be written as 
\begin{align}
\label{Eq: discrete weak form}
a_h(u_N,v_N)
=(\prescript{}{0}{\mathcal{D}}_{t}^{\frac{\alpha}{2}}\, u_N, \prescript{}{t}{\mathcal{D}}_{T}^{\frac{\alpha}{2}}\, v_N )_{\Omega} 
-\sum_{j=1}^{d} 
k_{j} \Big[ ( \prescript{}{a_j}{\mathcal{D}}_{x_j}^{\frac{\beta_j}{2}}\, u_N ,\, \prescript{}{x_j}{\mathcal{D}}_{b_j}^{\frac{\beta_j}{2}}\, v_N )_{\Omega}
+ ( \prescript{}{x_j}{\mathcal{D}}_{b_j}^{\frac{\beta_j}{2}}\, u_N , \, \prescript{}{a_j}{\mathcal{D}}_{x_j}^{\frac{\beta_j}{2}} v_N)_{\Omega}
\Big].
\end{align}
We expand the approximate solution $u_N \in U_N$, satisfying the discrete bilinear form \eqref{Eq: discrete weak form}, in the following form
\begin{align}
\label{Eq: PG expansion}
u_{N}(t,\textbf{x}) = 
\sum_{n=1}^\mathcal{N}
\sum_{m_1=1}^{\mathcal{M}_1}
\cdots 
\sum_{m_d= 1}^{\mathcal{M}_d}  \,\,
\hat u_{ n,m_1,\cdots,m_d} \,\,
\Big[\psi^{\tau}_n(t)
\prod_{j=1}^{d} \phi^{}_{m_j}(x_j)
\Big] ,
\end{align}
and obtain the corresponding Lyapunov system by substituting \eqref{Eq: PG expansion} into \eqref{Eq: discrete weak form} by choosing $v_N(t,\textbf{x}) = \Psi^{\tau}_k(t) \prod_{j=1}^{d} \Phi^{}_{r_j}(x_j)$, $k = 1,2, \dots, \mathcal{N}$, $r_j= 1,2, \dots, \mathcal{M}_j$. Therefore, 
\begin{align}
\label{Eq: general Lyapunov}
\Big[
S_{T} \otimes M_1 \otimes M_2 \cdots \otimes M_d 
&+
\sum_{j=1}^{d} 
M_{T} \otimes M_1\otimes \cdots   \otimes M_{j-1} \otimes S_{j}^{{Tot}} \otimes M_{j+1}  \cdots \otimes M_d
\nonumber
\\
&+ 
\gamma \, M_{T}\otimes M_1 \otimes M_2 \cdots \otimes M_d 
\Big] \, 
\mathcal{U}= F,
\end{align} 
in which $\otimes$ represents the Kronecker product, $F$ denotes the multi-dimensional load matrix whose entries are given as
\begin{eqnarray}
\label{Eq: general load matrix}
F_{k,r_1,\cdots, r_d} = \int_{\Omega}^{} f(t,\textbf{x}) \,
\Big(
\Psi^{\,\tau}_k \circ \eta \Big)(t)
\prod_{j=1}^{d} \Big(\Phi^{}_{r_j} \circ \xi_j\Big)(x_j)\, 
d\Omega,
\end{eqnarray}
and $\mathcal{U}$ is the matrix of unknown coefficients. The matrices $S_{T}$ and $M_{T}$ denote the temporal stiffness and mass matrices, respectively; and the matrices $S_{j}$ and $M_j$ denote the spatial stiffness and mass matrices, respectively. We obtain the entries of spatial mass matrix $M_j$ analytically and employ proper quadrature rules to accurately compute the entries of other matrices $S_{T}$, $M_{T}$ and $S_{j}$.

We note that the choices of basis/test functions, employed in developing the PG scheme leads to symmetric mass and stiffness matrices, providing useful properties to further develop a fast solver. The following Theorem \ref{Thm: fast solver} provides a unified fast solver, developed in terms of the generalized eigensolutions in order to obtain a closed-form solution to the Lyapunov system \eqref{Eq: general Lyapunov}.

\begin{thm}[Unified Fast FPDE Solver \cite{Samiee2016unified,samiee2017fast}]
	\label{Thm: fast solver}
	Let $\{ {\vec{e}_{}}^{\,\,\mu_j}    ,    \lambda^{}_{m_j}\,  \}_{m_j=1}^{\mathcal{M}_j}$ be the set of general eigen-solutions of the spatial stiffness matrix $S^{Tot}_j$ with respect to the mass matrix $M_{j}$. Moreover, let $\{ {\vec{e}_{n}}^{\,\,\tau}    ,    \lambda^{\tau}_{n}\,  \}_{n=1}^{\mathcal{N}}$ be the set of general eigen-solutions of the temporal mass matrix $M_{T}$ with respect to the stiffness matrix $S_{T}$. Then, the matrix of unknown coefficients $\mathcal{U}$ is explicitly obtained as
	\begin{equation}
	\label{Eq: thm u expression in terms of k}
	\mathcal{U} = 
	\sum_{n=1}^{\mathcal{N}}
	\,\,
	\sum_{m_1= 1}^{\mathcal{M}_1}
	\cdots 
	\sum_{m_d= 1}^{\mathcal{M}_d}
	\kappa_{ n,m_1,\cdots,\,m_d  } \,
	\,\vec{e}_n^{\,\,\tau}\,
	\otimes
	\,{\vec{e}_{m_1}}^{}\,\,
	\otimes
	\cdots
	\otimes
	\,{\vec{e}_{m_d}}^{},
	\end{equation}
	where $\kappa_{ n,m_1,\cdots,\,m_d }$ is given by 
	\begin{eqnarray}
	\label{Eq: thm k fraction_1}
	\kappa_{ n,m_1,\cdots,\,m_d  } =  \frac{(\,\vec{e}_n^{\,\,\tau}
		\,{\vec{e}_{m_1}}^{}
		\cdots
		\,{\vec{e}_{m_d}}^{}) F}
	{
		\Big[
		(\vec{e}_n^{\,\,\tau^T} \, S_{T} \, \vec{e}_n^{\,\,\tau}) \,
		\prod_{j=1}^{d} (\vec{e}_{m_j}^{T}  \,  M_{j} \,  {\vec{e}_{m_j}}^{}) \,
		\Big]
		\Lambda_{n,m_1,\cdots,m_d}
	},
	\end{eqnarray}
	in which the numerator represents the standard multi-dimensional inner product, and $\Lambda_{n,m_1,\cdots,m_d}$ is obtained in terms of the eigenvalues of all mass matrices as
	\begin{eqnarray}
	\nonumber
	&\Lambda_{n,m_1,\cdots, m_d} = \Big[
	(1+\gamma\,\, 
	\lambda^{\tau}_n)
	+
	\lambda^{\tau}_n
	\sum_{j=1}^{d}
	(
	\lambda^{}_{m_j}
	)
	\Big].  &
	\end{eqnarray}
	
\end{thm}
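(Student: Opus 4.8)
The plan is to solve the tensor-structured Lyapunov system \eqref{Eq: general Lyapunov} by the generalized matrix-diagonalization technique, exploiting the symmetry of all the mass and stiffness matrices produced by the Petrov--Galerkin discretization. The central observation is that the operator on the left of \eqref{Eq: general Lyapunov} is assembled from a single temporal pencil $(M_{T},S_{T})$ and $d$ spatial pencils $(S_{j}^{Tot},M_{j})$ glued together by Kronecker products. If each pencil can be simultaneously ``diagonalized'', then the tensor basis built from the corresponding eigenvectors will diagonalize the full operator and decouple the system into independent scalar equations, one for each multi-index $(n,m_1,\dots,m_d)$.

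First I would record the two generalized eigenvalue problems, $S_{j}^{Tot}\vec{e}_{m_j} = \lambda_{m_j} M_{j}\vec{e}_{m_j}$ for $j=1,\dots,d$ and $M_{T}\vec{e}_n^{\,\tau} = \lambda_n^{\tau} S_{T}\vec{e}_n^{\,\tau}$, together with the orthogonality relations they induce. Because the matrices are symmetric, the standard argument --- writing $(\lambda_{m_j}-\lambda_{m_j'})\,\vec{e}_{m_j'}^{\,T} M_{j}\vec{e}_{m_j}=0$ and similarly for the temporal pencil --- yields $\vec{e}_{m_j'}^{\,T} M_{j}\vec{e}_{m_j}=0$ for $m_j\neq m_j'$ and $\vec{e}_{n'}^{\,\tau T} S_{T}\vec{e}_n^{\,\tau}=0$ for $n\neq n'$. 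These are precisely the $M_{j}$- and $S_{T}$-orthogonalities whose diagonal entries $\vec{e}_{m_j}^{\,T}M_{j}\vec{e}_{m_j}$ and $\vec{e}_n^{\,\tau T}S_{T}\vec{e}_n^{\,\tau}$ appear in the denominator of \eqref{Eq: thm k fraction_1}.

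Next I would insert the ansatz \eqref{Eq: thm u expression in terms of k} into the left-hand side of \eqref{Eq: general Lyapunov} and apply the Kronecker mixed-product rule $(A_0\otimes\cdots\otimes A_d)(\vec{v}_0\otimes\cdots\otimes\vec{v}_d)=(A_0\vec{v}_0)\otimes\cdots\otimes(A_d\vec{v}_d)$ term by term. The first Kronecker term returns $(S_{T}\vec{e}_n^{\,\tau})\otimes(M_{1}\vec{e}_{m_1})\otimes\cdots\otimes(M_{d}\vec{e}_{m_d})$ with factor $1$; each $j$-th term of the sum replaces $M_{T}\vec{e}_n^{\,\tau}$ by $\lambda_n^{\tau}S_{T}\vec{e}_n^{\,\tau}$ and $S_{j}^{Tot}\vec{e}_{m_j}$ by $\lambda_{m_j}M_{j}\vec{e}_{m_j}$, contributing the factor $\lambda_n^{\tau}\lambda_{m_j}$ on the same tensor; and the $\gamma$-term contributes $\gamma\lambda_n^{\tau}$. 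Collecting, the full operator maps each eigen-tensor to $\Lambda_{n,m_1,\dots,m_d}\,(S_{T}\vec{e}_n^{\,\tau})\otimes(M_{1}\vec{e}_{m_1})\otimes\cdots\otimes(M_{d}\vec{e}_{m_d})$ with $\Lambda_{n,m_1,\dots,m_d}=1+\gamma\lambda_n^{\tau}+\lambda_n^{\tau}\sum_{j=1}^{d}\lambda_{m_j}$, exactly the scalar in the statement. Testing both sides against the dual eigen-tensor $\vec{e}_{n'}^{\,\tau}\otimes\vec{e}_{m_1'}\otimes\cdots\otimes\vec{e}_{m_d'}$ and invoking the orthogonality relations collapses the multiple sum to a single surviving term, which upon division yields $\kappa_{n,m_1,\dots,m_d}$ exactly as in \eqref{Eq: thm k fraction_1}, with the tensor inner product $(\vec{e}_n^{\,\tau}\vec{e}_{m_1}\cdots\vec{e}_{m_d})F$ appearing in the numerator.

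The main obstacle I anticipate is not the algebra but justifying that the diagonalization is legitimate: I must verify that each pencil admits a complete set of generalized eigenvectors (which follows from symmetry together with positive-definiteness of $M_{j}$ and of $S_{T}$, giving a simultaneous congruence to diagonal form even when eigenvalues are repeated, where one selects in-eigenspace orthogonal representatives), and that the denominator never vanishes. The latter requires $\vec{e}_n^{\,\tau T}S_{T}\vec{e}_n^{\,\tau}>0$ and $\vec{e}_{m_j}^{\,T}M_{j}\vec{e}_{m_j}>0$, guaranteed by the positive-definiteness of the Gram-type matrices from the discretization, together with $\Lambda_{n,m_1,\dots,m_d}\neq 0$. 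The last holds because $\gamma\ge 0$, the temporal eigenvalues $\lambda_n^{\tau}$ are positive, and the spatial stiffness eigenvalues $\lambda_{m_j}\ge 0$, so that $\Lambda_{n,m_1,\dots,m_d}\ge 1$. Once these structural facts are established, the closed form \eqref{Eq: thm u expression in terms of k}--\eqref{Eq: thm k fraction_1} follows at once.
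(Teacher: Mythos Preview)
The paper does not actually prove this theorem: it is quoted verbatim from \cite{Samiee2016unified,samiee2017fast} and stated without proof, so there is no in-paper argument to compare against. Your approach---simultaneous diagonalization of the temporal pencil $(M_T,S_T)$ and the spatial pencils $(S_j^{Tot},M_j)$, then the Kronecker mixed-product rule to see that the tensor eigenbasis diagonalizes the full operator---is precisely the standard derivation that the cited references carry out, and your computation of $\Lambda_{n,m_1,\dots,m_d}$ and of $\kappa$ via testing against the dual eigen-tensor is correct. The only caveat is your closing claim that $\Lambda_{n,m_1,\dots,m_d}\ge 1$: this relies on sign information about $\lambda_n^\tau$ and $\lambda_{m_j}$ that the present paper does not establish (it merely asserts symmetry of the matrices, not definiteness), so in this paper's context that nonvanishing should be taken as an implicit hypothesis inherited from the cited works rather than something you can deduce here.
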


%
%
%
%
%
\subsection{\textbf{Stability Analysis}}
\label{Sec: Stability and Convergence of PG}
%
%
We show the well-posedness of defined problem and prove the stability of proposed numerical scheme. 
\begin{lem}
	\label{norm_223}
	Let $\alpha \in (0,1)$, $\Omega=I \times \Lambda_d$, and $u\in \prescript{l}{0}H^{\alpha/2}(I; L^2(\Lambda_d))$. Then, 
	\begin{equation*}
	\big\vert \left( \prescript{}{0}{\mathcal{D}}_{t}^{\alpha/2} u, \prescript{}{t}{\mathcal{D}}_{T}^{\alpha/2} v \right)_{\Omega} \big\vert \equiv \Vert u \Vert_{\prescript{l}{}H^{\alpha/2}(I; L^2(\Lambda_d))} \, \Vert v \Vert_{\prescript{r}{}H^{\alpha/2}(I; L^2(\Lambda_d))},
	\quad 
	\forall v \in \prescript{r}{0}H^{\alpha/2}(I; L^2(\Lambda_d)).
	\end{equation*}
\end{lem}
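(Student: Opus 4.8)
The plan is to read the claimed equivalence $\equiv$ as the conjunction of a boundedness (upper) estimate valid for every admissible pair $(u,v)$ and an inf--sup (lower) estimate, exactly the two ingredients needed for the Babu\v{s}ka--Ne\v{c}as well-posedness invoked in this section. Since the fractional operators here act only in the temporal variable, the first move is to reduce the statement to its one-dimensional temporal content: by Fubini's theorem,
$$
\left( \prescript{}{0}{\mathcal{D}}_{t}^{\alpha/2} u, \prescript{}{t}{\mathcal{D}}_{T}^{\alpha/2} v \right)_{\Omega}
= \int_{\Lambda_d} \left( \prescript{}{0}{\mathcal{D}}_{t}^{\alpha/2} u(\cdot,x), \prescript{}{t}{\mathcal{D}}_{T}^{\alpha/2} v(\cdot,x) \right)_{I}\, dx,
$$
so the spatial block $\Lambda_d$ enters only as a Bochner parameter, and the $L^2(\Lambda_d)$-valued norms in \eqref{space norm 2} are precisely the matching Bochner norms. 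Thus every step below is the one-dimensional temporal statement applied pointwise in $x$ and integrated, provided the constants are uniform in $x$.

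For the \emph{upper bound} I would simply apply the Cauchy--Schwarz inequality in $L^2(\Omega)$,
$$
\left| \left( \prescript{}{0}{\mathcal{D}}_{t}^{\alpha/2} u, \prescript{}{t}{\mathcal{D}}_{T}^{\alpha/2} v \right)_{\Omega} \right|
\le \Vert \prescript{}{0}{\mathcal{D}}_{t}^{\alpha/2} u \Vert_{L^2(\Omega)}\, \Vert \prescript{}{t}{\mathcal{D}}_{T}^{\alpha/2} v \Vert_{L^2(\Omega)},
$$
and then observe that, directly from the definitions in \eqref{space norm 2}, the first factor is bounded by $\Vert u \Vert_{\prescript{l}{}H^{\alpha/2}(I;L^2(\Lambda_d))}$ and the second by $\Vert v \Vert_{\prescript{r}{}H^{\alpha/2}(I;L^2(\Lambda_d))}$, since those norms carry exactly these derivative terms. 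This gives the upper estimate with a constant at most one and needs no regularity beyond membership in the two spaces.

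The \emph{lower bound} is the heart of the matter and I would establish it through the positivity of the mixed left--right pairing on the whole line. For $u$ with $u|_{t=0}=0$ and any $v$ with $v|_{t=T}=0$, extend both by zero to $\mathbb{R}$; on their supports the finite-interval Riemann--Liouville derivatives agree with the whole-line ones, whose Fourier symbols are $(i\omega)^{\alpha/2}$ and $(-i\omega)^{\alpha/2}$. A Parseval computation then yields the classical coercivity identity $(\prescript{}{0}{\mathcal{D}}_{t}^{\alpha/2} w, \prescript{}{t}{\mathcal{D}}_{T}^{\alpha/2} w)_{I} = \cos(\tfrac{\pi\alpha}{2})\,\Vert \prescript{}{0}{\mathcal{D}}_{t}^{\alpha/2} w \Vert_{L^2(I)}^2$, with $\cos(\tfrac{\pi\alpha}{2})>0$ because $\alpha\in(0,1)$. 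Integrating over $\Lambda_d$, combining with the equivalence of the left/right seminorms from Lemma \ref{Lem: sobolev equivalence} (in its temporal form) and with the fractional Poincar\'e--Friedrichs inequality, which upgrades the seminorm $\Vert \prescript{}{0}{\mathcal{D}}_{t}^{\alpha/2} u \Vert_{L^2(\Omega)}$ to the full norm $\Vert u \Vert_{\prescript{l}{}H^{\alpha/2}}$ on the zero-initial-trace space, I would obtain for a suitably chosen $v$ aligned with $u$ (the temporal analogue of the spatial inf--sup inequality stated just after Lemma \ref{Lem: sobolev equivalence}) the bound $\sup_{0\ne v} \frac{|(\prescript{}{0}{\mathcal{D}}_{t}^{\alpha/2} u, \prescript{}{t}{\mathcal{D}}_{T}^{\alpha/2} v)_{\Omega}|}{\Vert v \Vert_{\prescript{r}{}H^{\alpha/2}}} \ge \beta\, \Vert u \Vert_{\prescript{l}{}H^{\alpha/2}}$.

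The main obstacle I anticipate is entirely in this lower bound, and specifically in the mismatch between the trial space (zero initial trace, $u|_{t=0}=0$) and the test space (zero terminal trace, $v|_{t=T}=0$): the diagonal choice $v=u$ is inadmissible, so the coercivity identity cannot be used directly and must be transferred through the seminorm equivalences together with a carefully constructed test function. Making the zero-extension argument rigorous at the low regularity $\alpha/2\in(0,\tfrac12)$, so that the finite-interval derivatives genuinely coincide with the whole-line Fourier multipliers and no boundary contributions survive, and checking that the Poincar\'e and equivalence constants are uniform in the Bochner parameter $x\in\Lambda_d$, is where the real work lies; by contrast the upper bound and the reduction to a single temporal dimension are routine.
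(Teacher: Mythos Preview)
Your upper bound is identical to the paper's: both apply Cauchy--Schwarz (the paper calls it H\"older) on $L^2(\Omega)$ and then dominate each factor by the full Bochner norm of \eqref{space norm 2}. Your Fubini reduction of the spatial block to a Bochner parameter is likewise exactly what the paper does, only you state it explicitly while the paper leaves it implicit in the iterated integrals.

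For the lower bound the two arguments differ mainly in packaging. The paper does \emph{not} carry out a Fourier/Parseval computation; it simply invokes the seminorm equivalence $\vert \cdot \vert_{H^{s}(I)} \equiv \vert \cdot \vert^{*}_{H^{s}(I)} = \vert \cdot \vert^{1/2}_{{^l}H^{s}(I)}\,\vert \cdot \vert^{1/2}_{{^r}H^{s}(I)}$ of Lemma~\ref{Lem: sobolev equivalence} as a black box, then writes a short chain of inequalities on $I$ and integrates over $\Lambda_d$. Your explicit identity $(\prescript{}{0}{\mathcal D}_t^{\alpha/2}w,\prescript{}{t}{\mathcal D}_T^{\alpha/2}w)_I=\cos(\tfrac{\pi\alpha}{2})\,\Vert \prescript{}{0}{\mathcal D}_t^{\alpha/2}w\Vert_{L^2(I)}^2$ is precisely the mechanism that underlies that cited equivalence (it is the Ervin--Roop/Li--Xu computation), so you are unpacking what the paper quotes. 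What you gain is a concrete constant and a cleaner justification; what the paper gains is brevity.

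Two further points of comparison. First, the trial/test mismatch you flag (zero initial trace versus zero terminal trace, so $v=u$ is inadmissible) is not addressed in the paper's proof either; the paper's displayed chain proceeds with generic $u,v$ and a reversed-looking product inequality, so your inf--sup reading and the need for a ``suitably aligned'' test function is a genuine clarification rather than a deviation. Second, you invoke a fractional Poincar\'e--Friedrichs step to pass from the seminorm to the full norm; the paper's proof does not isolate this step and instead absorbs it into the asserted equivalence, so again you are making explicit something the paper leaves tacit.
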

\begin{proof}
	See Appendix \ref{Sec: App. proof of norm time} for proof.
\end{proof}

By equivalence of function spaces $\prescript{l}{}H^{\sigma}_0(\Lambda)$, $\prescript{r}{}H^{\sigma}_0(\Lambda)$, and $\prescript{c}{}H^{\sigma}_0(\Lambda)$ and also their associated norms $\Vert \cdot \Vert_{{^l}H^{\sigma}_{}(\Lambda)}$, $\Vert \cdot \Vert_{{^r}H^{\sigma}_{}(\Lambda)}$, and $\Vert \cdot \Vert_{{^c}H^{\sigma}_{}(\Lambda)}$; and also by following similar steps as in Lemma \ref{norm_223}, we can also prove that
\begin{align}
\label{equiv_space}
\vert \big(\prescript{}{a_d}{\mathcal{D}}_{x_d}^{\beta_d/2} u, \prescript{}{x_d}{\mathcal{D}}_{b_d}^{\beta_d/2} v\big)_{\Lambda_d} \vert \equiv  \vert u \vert_{\prescript{c}{}H^{\beta_d/2} \Big((a_d,b_d); L^2(\Lambda_{d-1}) \Big)} \, \vert v \vert_{\prescript{c}{}H^{\beta_d/2} \Big((a_d,b_d); L^2(\Lambda_{d-1}) \Big)},
\\
\label{equiv_space2}
\vert \big(\prescript{}{x_d}{\mathcal{D}}_{b_d}^{\beta_d/2} u, \prescript{}{a_d}{\mathcal{D}}_{x_d}^{\beta_d/2} v\big)_{\Lambda_d} \vert \equiv  \vert u \vert_{\prescript{c}{}H^{\beta_d/2} \Big((a_d,b_d); L^2(\Lambda_{d-1}) \Big)} \, \vert v \vert_{\prescript{c}{}H^{\beta_d/2} \Big((a_d,b_d); L^2(\Lambda_{d-1}) \Big)}.
\end{align} 
\begin{lem}[Continuity]
	\label{continuity_lem}
	The bilinear form \eqref{Eq: bilinear form} is continuous, i.e.,
	\begin{align}
	\label{continuity_eq}
	\forall u \in U, \,\,
	\exists \, \beta > 0,  
	\quad \text{s.t.} \quad 
	\vert a(u,v)\vert \leq 
	\beta \,\,   \Vert u \Vert_{U}   \,\,     \Vert v \Vert_{V},
	\quad \forall v \in V.
	\end{align}
\end{lem}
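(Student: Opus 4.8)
The plan is to prove continuity by the standard route for a bilinear form that is a \emph{finite} sum of $L^2(\Omega)$ inner products: apply the triangle inequality to \eqref{Eq: bilinear form}, bound each resulting inner product by the Cauchy--Schwarz inequality in $L^2(\Omega)$, and then observe that every factor produced this way is exactly one of the nonnegative summands appearing under the square root in the definitions \eqref{Eq: solution space norm}--\eqref{Eq: test space norm} of $\Vert u \Vert_U$ and $\Vert v \Vert_V$. Thus no Sobolev-space equivalence, interpolation, or trace estimate is needed here; the decisive structural work has already been carried out in assembling the norm identity \eqref{space norm 3} via Lemma \ref{space norm 1}.

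Concretely, I would first write
\begin{align*}
\vert a(u,v) \vert \leq {} & \big\vert (\prescript{}{0}{\mathcal{D}}_{t}^{\frac{\alpha}{2}} u, \prescript{}{t}{\mathcal{D}}_{T}^{\frac{\alpha}{2}} v)_{\Omega} \big\vert \\
& + \sum_{j=1}^{d} k_j \Big[ \big\vert (\prescript{}{a_j}{\mathcal{D}}_{x_j}^{\frac{\beta_j}{2}} u, \prescript{}{x_j}{\mathcal{D}}_{b_j}^{\frac{\beta_j}{2}} v)_{\Omega} \big\vert + \big\vert (\prescript{}{x_j}{\mathcal{D}}_{b_j}^{\frac{\beta_j}{2}} u, \prescript{}{a_j}{\mathcal{D}}_{x_j}^{\frac{\beta_j}{2}} v)_{\Omega} \big\vert \Big],
\end{align*}
and then apply Cauchy--Schwarz to each of the $2d+1$ terms, turning every summand into a product of two $L^2(\Omega)$ norms of fractional derivatives. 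Reading off \eqref{Eq: solution space norm} gives $\Vert \prescript{}{0}{\mathcal{D}}_{t}^{\frac{\alpha}{2}} u \Vert_{L^2(\Omega)} \leq \Vert u \Vert_U$ and $\Vert \prescript{}{a_j}{\mathcal{D}}_{x_j}^{\frac{\beta_j}{2}} u \Vert_{L^2(\Omega)}, \Vert \prescript{}{x_j}{\mathcal{D}}_{b_j}^{\frac{\beta_j}{2}} u \Vert_{L^2(\Omega)} \leq \Vert u \Vert_U$, while \eqref{Eq: test space norm} gives the matching bounds $\Vert \prescript{}{t}{\mathcal{D}}_{T}^{\frac{\alpha}{2}} v \Vert_{L^2(\Omega)}, \Vert \prescript{}{a_j}{\mathcal{D}}_{x_j}^{\frac{\beta_j}{2}} v \Vert_{L^2(\Omega)}, \Vert \prescript{}{x_j}{\mathcal{D}}_{b_j}^{\frac{\beta_j}{2}} v \Vert_{L^2(\Omega)} \leq \Vert v \Vert_V$. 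Substituting these collapses every product to $\Vert u \Vert_U \Vert v \Vert_V$, yielding
\begin{align*}
\vert a(u,v) \vert \leq \Big( 1 + 2 \sum_{j=1}^{d} k_j \Big) \Vert u \Vert_U \Vert v \Vert_V,
\end{align*}
which is the claim with the explicit constant $\beta = 1 + 2 \sum_{j=1}^{d} k_j$, finite and positive since $d$ is finite and the $k_j$ are fixed positive coefficients.

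I do not anticipate a genuine obstacle: the estimate is uniform in $u$ and $v$ and the constant is explicit. The only point that demands attention is the bookkeeping that pairs each fractional-derivative norm over the full space--time domain $\Omega$ with the correct summand of the product norm, in particular keeping straight that the left-sided derivatives of $u$ and the right-sided derivatives of $v$ (and vice versa in the second spatial term) are precisely the pieces collected in $\Vert u \Vert_U$ and $\Vert v \Vert_V$ respectively. Since those norms are built exactly from the $L^2(\Omega)$ norms of the derivatives occurring in \eqref{Eq: bilinear form}, this matching is immediate and the argument is essentially mechanical.
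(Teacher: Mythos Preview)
Your proof is correct and follows essentially the same route as the paper, which simply cites Lemma~\ref{norm_223} and the equivalences \eqref{equiv_space}--\eqref{equiv_space2}; the upper-bound direction of those results is precisely the term-by-term Cauchy--Schwarz estimate you carry out directly. Your version has the minor advantage of being self-contained and of producing the explicit constant $\beta = 1 + 2\sum_{j=1}^d k_j$.
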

\begin{proof}
	The proof directly concludes from \eqref{equiv_space}, \eqref{equiv_space2} and Lemma \ref{norm_223}.
\end{proof}	

\begin{thm}[Stability]
	\label{inf_sup_d_lem}
	The following inf-sup condition holds for the bilinear form \eqref{Eq: bilinear form}, i.e.,
	\begin{align}
	\label{Eq: inf sup-time_d_well}
	\underset{ 0 \neq u \in U}{\inf} \,\, \underset{ 0 \neq  v \in V}{\sup}
	\frac{\vert a(u , v)\vert}{ \,\, \Vert v\Vert_{V} \,\, \Vert u \Vert_{U} } \geq \beta > 0,
	%
	%
	\end{align}
	where $\Omega = I \times \Lambda_d$ and $\underset{u \in U}{\sup} \,\, \vert a(u , v)\vert>0$.
\end{thm}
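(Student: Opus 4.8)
The plan is to reduce the inf--sup statement to a coercivity estimate, exploiting the fact that for $\alpha \in (0,1)$ the solution and test spaces coincide as sets. Since $\alpha/2 < \tfrac12$, functions in $\prescript{l}{0}{\mathcal{H}}^{\alpha/2}(I;L^2(\Lambda_d))$ and $\prescript{r}{0}{\mathcal{H}}^{\alpha/2}(I;L^2(\Lambda_d))$ carry no temporal trace, so the conditions $u|_{t=0}=0$ and $v|_{t=T}=0$ are vacuous; combined with the fact that the spatial factor $\mathcal{X}_d$ is identical in $U$ and $V$ (Lemma~\ref{Lem: sobolev equivalence}), this gives $U=V$ as sets with equivalent norms. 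Hence, for any fixed $0\neq u \in U$ I may use the admissible test function $v=u\in V$, so that $\sup_{0\neq v\in V} |a(u,v)|/\Vert v\Vert_V \geq |a(u,u)|/\Vert u\Vert_V$. Following the template of \cite{Samiee2016unified}, it then suffices to establish (i) the coercivity $a(u,u)\geq C_1\Vert u\Vert_U^2$ and (ii) the norm bound $\Vert u\Vert_V \leq C_2\Vert u\Vert_U$.

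For the coercivity (i) I would treat the temporal and spatial contributions separately, the decisive point being the \emph{sign} of the self-interaction of left- and right-sided derivatives. Extending $u$ by zero and passing to Fourier variables, the symbols $(i\omega)^s$ and $(-i\omega)^s$ yield the identity $(\prescript{}{a}{\mathcal{D}}_x^{s}u,\,\prescript{}{x}{\mathcal{D}}_b^{s}u)=\cos(\pi s)\,\vert u\vert_{H^s}^2$, which underlies Lemma~\ref{norm_223} and the equivalences \eqref{equiv_space}--\eqref{equiv_space2}. For the temporal term $s=\alpha/2\in(0,\tfrac12)$ gives $\cos(\pi\alpha/2)>0$, so $(\prescript{}{0}{\mathcal{D}}_t^{\alpha/2}u,\,\prescript{}{t}{\mathcal{D}}_T^{\alpha/2}u)_\Omega$ is bounded below by a positive multiple of the temporal seminorm. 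For each spatial term $s=\beta_j/2\in(\tfrac12,1)$ gives $\cos(\pi\beta_j/2)<0$; since $k_j>0$ and each of the two cross inner products is a negative multiple of the spatial seminorm, the \emph{subtracted} bracket in \eqref{Eq: bilinear form} contributes a positive multiple of $\vert u\vert^2_{H^{\beta_j/2}_{x_j}}$. Summing and invoking a fractional Poincar\'e inequality to absorb the $L^2(\Omega)$ part of $\Vert u\Vert_U$ into the seminorms then yields $a(u,u)\geq C_1\Vert u\Vert_U^2$ with $C_1 = c\,\min\{\cos(\pi\alpha/2),\,\min_j(-k_j\cos(\pi\beta_j/2))\}>0$.

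The bound (ii) is immediate from the equivalence of the left- and right-sided temporal norms (the spatial factors being identical), so combining the two ingredients gives
\[
\sup_{0\neq v\in V}\frac{\vert a(u,v)\vert}{\Vert v\Vert_V\,\Vert u\Vert_U}
\;\geq\; \frac{\vert a(u,u)\vert}{\Vert u\Vert_V\,\Vert u\Vert_U}
\;\geq\; \frac{C_1\,\Vert u\Vert_U^2}{C_2\,\Vert u\Vert_U\,\Vert u\Vert_U}
\;=\; \frac{C_1}{C_2}\;=:\;\beta>0,
\]
and taking the infimum over $0\neq u\in U$ yields \eqref{Eq: inf sup-time_d_well}. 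The non-degeneracy condition $\sup_{u\in U}\vert a(u,v)\vert>0$ for each $0\neq v\in V$ follows from the symmetric choice $u=v$ together with the same coercivity estimate. The main obstacle is step (i): the equivalences \eqref{equiv_space}--\eqref{equiv_space2} as stated control only $\vert a(\cdot,\cdot)\vert$ and not its sign, so the coercivity genuinely hinges on the cosine identity and on the interplay of $\cos(\pi\alpha/2)>0$ with $\cos(\pi\beta_j/2)<0$ and the sign convention of \eqref{Eq: bilinear form}; the tensor-product bookkeeping across the $d$ spatial directions and the Poincar\'e absorption of the $L^2$ term are then routine.
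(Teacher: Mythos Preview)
Your route is genuinely different from the paper's. In Appendix~\ref{Sec: App. proof of stability} the authors argue directly for a lower bound $\vert a(u,v)\vert \geq C\,\Vert u\Vert_U\Vert v\Vert_V$: they first assert that the absolute value of the bilinear form dominates a constant $\tilde\beta$ times the sum of the absolute values of its individual terms, and then invoke Lemma~\ref{norm_223} and the equivalences \eqref{equiv_space}--\eqref{equiv_space2} term by term. No particular test function is selected, and no sign discussion enters.

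You instead reduce to coercivity by choosing $v=u$, which hinges on the identification $U=V$ (legitimate here because $\alpha/2<\tfrac12$ kills the temporal trace, while the spatial factor $\mathcal{X}_d$ is common to both spaces) and on an explicit sign computation via the Fourier identity $(\prescript{}{a}{\mathcal{D}}_x^{s}u,\,\prescript{}{x}{\mathcal{D}}_b^{s}u)=\cos(\pi s)\,\vert u\vert^2_{H^s}$. This makes transparent why the temporal contribution ($\cos(\pi\alpha/2)>0$) and the spatial contributions ($-k_j\cos(\pi\beta_j/2)>0$) assemble with the correct signs---precisely the point the paper's first inequality ($\vert\text{sum}\vert\geq\tilde\beta\sum\vert\text{terms}\vert$) leaves unjustified, since such an inequality is generally false without a sign argument of exactly the kind you supply. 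The trade-off is that your argument is tied to $\alpha\in(0,1)$ through the identification $U=V$; the paper's formulation as a genuine inf--sup is in principle better adapted to a Petrov--Galerkin setting where trial and test spaces differ, though its proof as written glosses over the step your approach handles carefully.
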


\begin{proof}
	See Appendix \ref{Sec: App. proof of stability} for proof.
\end{proof}

\begin{thm}[well-posedness]
	\label{Thm: well-posedness_1D}
	For all $0<\alpha<1$, $\alpha \neq 1$, and  $1<\beta_j<2$, and $j=1,\cdots,d$, there exists a unique solution to \eqref{Eq: general weak form FPDE}, continuously dependent on  $f$, which belongs to the dual space of $U$.
\end{thm}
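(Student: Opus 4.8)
The plan is to recognize the weak formulation \eqref{Eq: general weak form FPDE} as a linear variational problem posed on the pair of Hilbert spaces $(U,V)$ defined in \eqref{Eq: solution test space}, and to conclude well-posedness by invoking the generalized Lax--Milgram theorem (the Banach--Ne\v{c}as--Babu\v{s}ka theorem) for non-coercive bilinear forms in the Petrov--Galerkin setting. This abstract result guarantees existence, uniqueness, and continuous dependence on the data precisely when three conditions hold: boundedness of $a(\cdot,\cdot)$, an inf--sup (stability) estimate on $a$, and a transpose non-degeneracy condition ensuring that no nonzero test function is annihilated by all trial functions. The strategy is therefore to collect the three ingredients, each of which has already been established earlier in the excerpt, and feed them into the abstract theorem.

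First I would verify that the right-hand side $v \mapsto (f,v)_{\Omega}$ defines a bounded linear functional, so that $f$ lies in the appropriate dual space; this follows from the Cauchy--Schwarz inequality together with the definition of the test norm \eqref{Eq: test space norm}, since $\vert (f,v)_{\Omega} \vert \leq \Vert f \Vert_{L^2(\Omega)} \Vert v \Vert_{L^2(\Omega)} \leq \Vert f \Vert_{L^2(\Omega)} \Vert v \Vert_{V}$. Next I would invoke the two structural hypotheses already proved: continuity of the bilinear form is exactly Lemma \ref{continuity_lem}, which furnishes a constant $\beta>0$ with $\vert a(u,v) \vert \leq \beta \Vert u \Vert_{U} \Vert v \Vert_{V}$ for all $u \in U$, $v \in V$; and the inf--sup stability estimate is exactly Theorem \ref{inf_sup_d_lem}, which supplies $\inf_{0 \neq u \in U} \sup_{0 \neq v \in V} \vert a(u,v) \vert / (\Vert u \Vert_{U} \Vert v \Vert_{V}) \geq \beta > 0$. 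The third ingredient, namely that $\sup_{u \in U} \vert a(u,v) \vert > 0$ for every nonzero $v \in V$, is stated together with Theorem \ref{inf_sup_d_lem}; it guarantees injectivity of the transpose operator and hence uniqueness of the solution.

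With these three facts in hand, the Banach--Ne\v{c}as--Babu\v{s}ka theorem yields a unique $u \in U$ satisfying $a(u,v)=(f,v)_{\Omega}$ for all $v \in V$, together with the a priori bound $\Vert u \Vert_{U} \leq \frac{1}{\beta} \Vert f \Vert$, measured in the relevant dual norm. This bound is precisely the asserted continuous dependence of the solution on the data $f$. I would close by noting that the entire argument is an application of the abstract well-posedness theorem to the concrete spaces $U$ and $V$, so no new estimates are needed at this stage. The only genuine difficulty of the well-posedness theory, namely establishing the inf--sup condition in the presence of the two-sided Riemann--Liouville operators and the tensorized, nested solution/test spaces $\mathcal{X}_d$, has already been absorbed into Theorem \ref{inf_sup_d_lem} via the norm equivalences of Lemmas \ref{Lem: norm equivalence 1}--\ref{space norm 1} and the integration-by-parts identities of Lemma \ref{lem_generalize}; consequently the main obstacle here is merely the careful identification of hypotheses with the verified lemmas rather than any fresh analytical work.
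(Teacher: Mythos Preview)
Your proposal is correct and matches the paper's own proof essentially verbatim: the paper simply states that Lemma \ref{continuity_lem} (continuity) and Theorem \ref{inf_sup_d_lem} (stability, together with the transpose non-degeneracy $\sup_{u\in U}\vert a(u,v)\vert>0$) yield well-posedness by the generalized Babu\v{s}ka--Lax--Milgram theorem. Your write-up is in fact more explicit than the paper's, spelling out the bounded-functional check and the a priori bound, but the underlying argument is identical.
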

\begin{proof}
	Lemmas \ref{continuity_lem} (continuity) and \ref{inf_sup_d_lem} (stability) yield the well-posedness of weak form \eqref{Eq: general weak form FPDE} in (1+d)-dimension due to the generalized Babu\v{s}ka-Lax-Milgram theorem.
\end{proof}

Since the defined basis and test spaces are Hilbert spaces, and $U_N \subset U$ and $V_N \subset V$, we can prove that the developed Petrov-Gelerkin spectral method is stable and the following condition holds
\begin{align}
\label{Eq: inf sup-time}
\underset{0 \neq  u_N \in U_N}{\inf}\, \, \underset{0 \neq  v \in V_N}{\sup}
\frac{\vert a(u_N , v_N)\vert}{\Vert v_N\Vert_{V} \,\, \Vert u_N\Vert_{U}} \geq \beta > 0, 
\end{align}
with $\beta > 0$ and independent of $N$, where $\underset{u_N \in U_N}{\sup} \vert a(u_N , v_N)\vert>0 , \,\, \forall v_N \in V_N$.

We recall again here that the adjoint FSEs have similar bilinear form; and since $\tilde{U} \subset U$ and $\tilde{V} \subset V$, the obtained results are also applicable to them.


%
\section{Fractional Model Construction}
\label{Sec: FSEM Model Construction}
%
We employ the developed FSEM in order to construct an iterative algorithm to estimate model parameters from known solution (or available sets of data). We formulate the iterative algorithm by minimizing an objective model error function. We recall again here that in our fractional model, the set of model parameters is $q = \{ \alpha, \beta_1, \beta_2 , \cdots , \beta_d , k_1, k_2, \cdots , k_d \}$, and here, we mainly focus on estimation of fractional indices. Thus, assuming the model coefficients $\{k_1, k_2, \cdots , k_d\}$ to be given/known, we reduce the model parameter set to $q = \{ \alpha, \beta_1, \beta_2 , \cdots , \beta_d \} \in Q \subset \mathbb{R}^{1+d}$.

%
\subsection{\textbf{Model Error}}
\label{Sec: Model Error}
%
The fractional model can be simply visualized as Fig. \ref{Fig: Fractional Model}, where $\mathcal{L}^{q} u = f$. We denote by the superscript $(^*)$ as the exact values of quantities. Therefore, $\mathcal{L}^{q^*} u^* = f^*$, where $u^*$, $f^*$ are the exact solution and force functions, respectively, and $q^*$ is the set of exact model parameters. Obviously, by choosing different values of model parameters (fractional indices), the fractional model observes the input differently, and thus, results in a different output. This leads to two types of \textit{model error}, namely, type-I and type-II, described as follows. We note that the introduced model errors are zero at the exact values $q^*$, by definition.

%
%
\begin{figure}[t]
	\centering
	\includegraphics[width=0.3\linewidth]{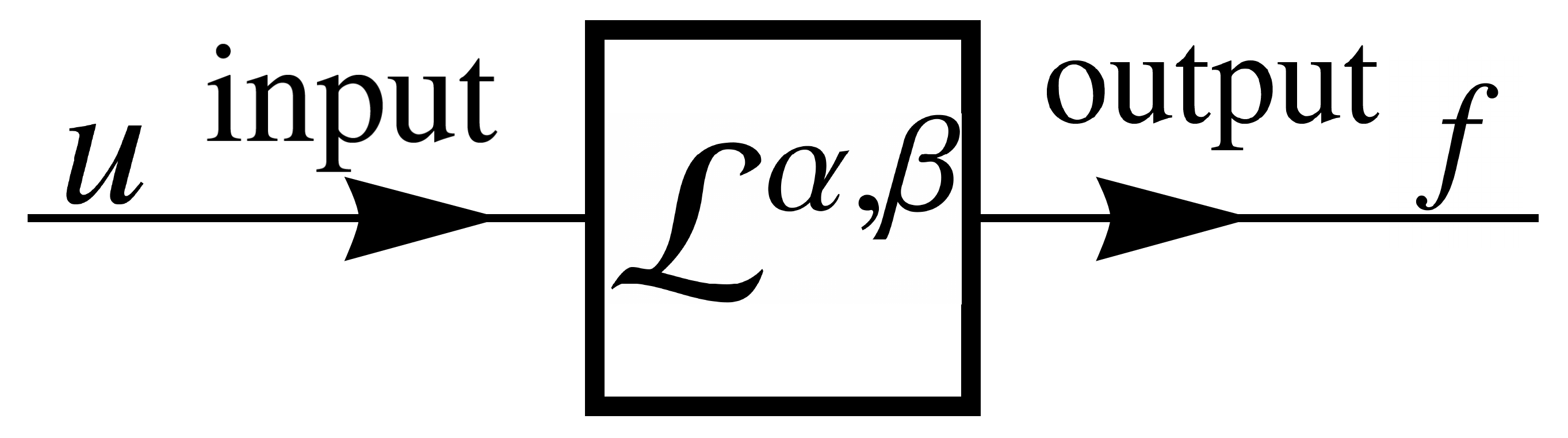}
	\vspace{-0.1 in}
	\caption{Schematic of fractional model}
	\label{Fig: Fractional Model}
\end{figure}
%
%
%
%
\begin{figure}[t]
	\centering
	\begin{subfigure}{0.3\textwidth}
		\centering
		\includegraphics[width=1\linewidth]{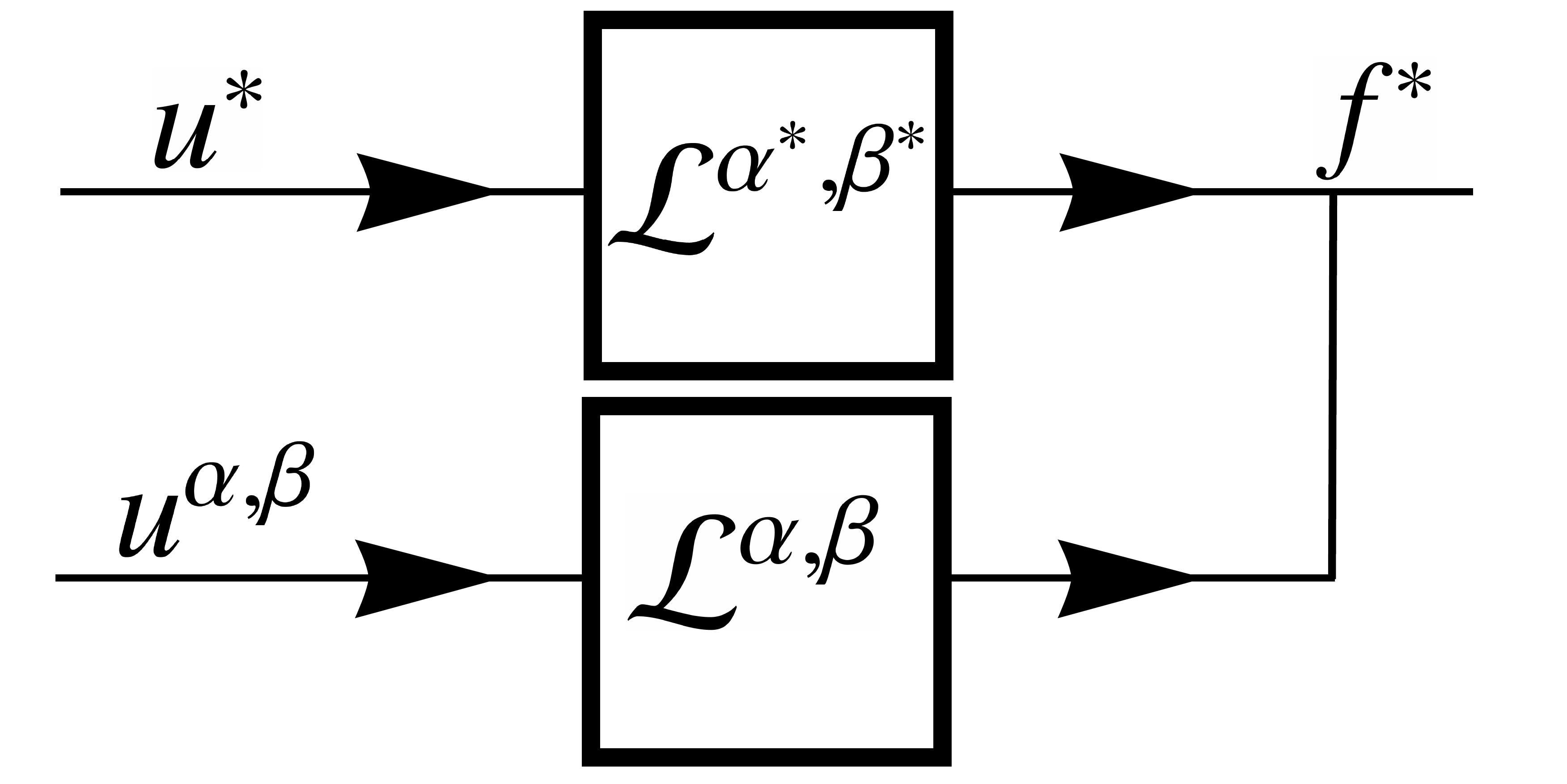}
		\caption{}
	\end{subfigure}
	\begin{subfigure}{0.3\textwidth}
		\centering
		\includegraphics[width=1\linewidth]{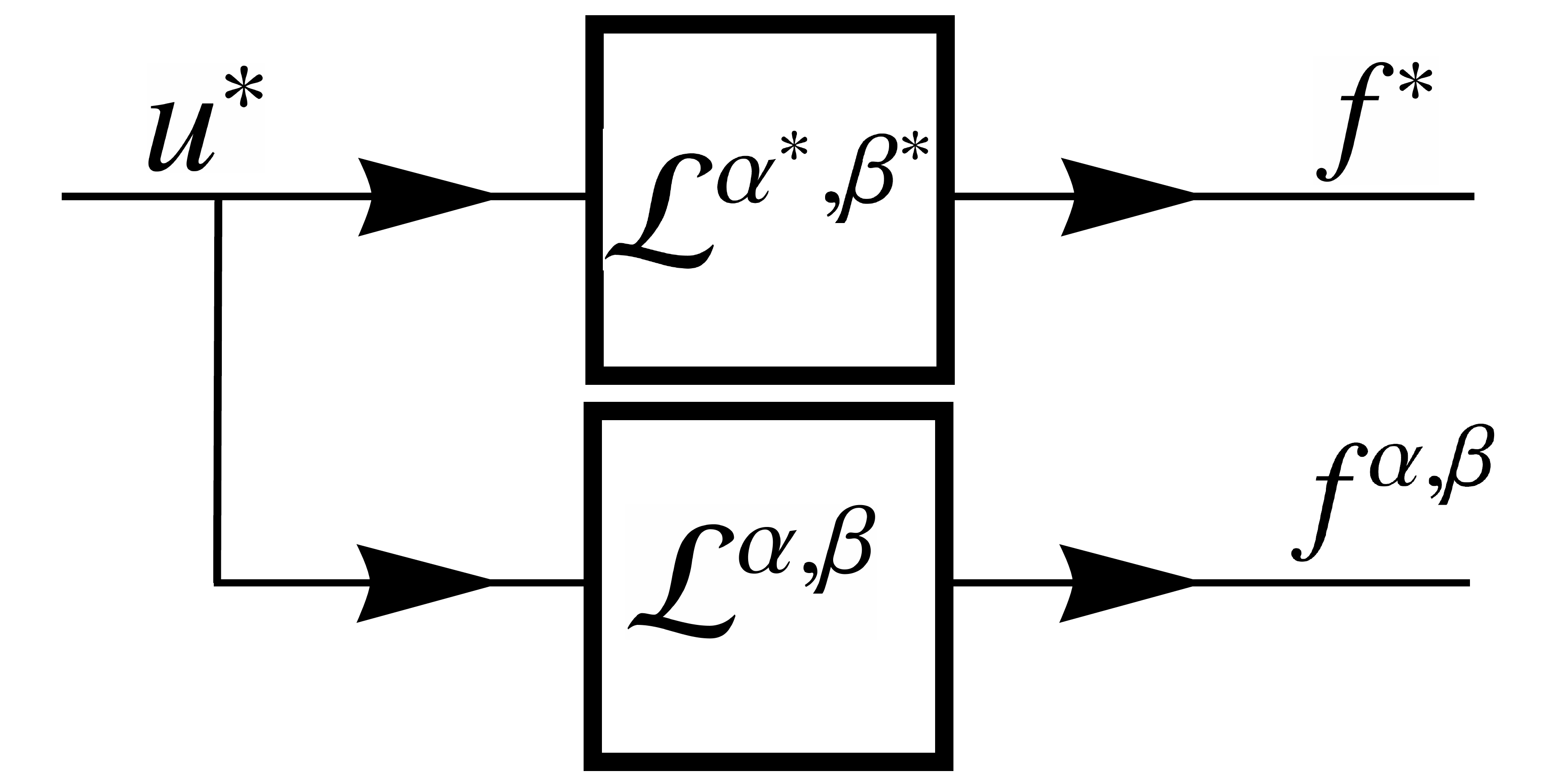}
		\caption{}
	\end{subfigure}	
	\vspace{-0.25 in}
	\caption{Schematic of variation of fractional model based on (a) model error type-I and (b) model error type-II.}
	\label{Fig: Model Error}
\end{figure}
%
%

%
\subsubsection{Model Error: Type-I}
\label{Sec: Model Error Type I}
%
In model error type-I, we consider the output of model to be fixed, i.e, $f = f^*$, however, changing parameters makes the fractional model to observe the variated input $u^{q}$ as opposed to $u^{*}$. Therefore, we define the model error as the difference between variated and exact inputs, i.e. $E(q) = || u^{q} - u^* ||_{L^2}$. The schematic of variation of model from the exact model is shown in Fig. \ref{Fig: Model Error} (a). For each variated model, we accurately compute the numerical approximation, $u_N^{q}$, by solving \eqref{Eq: FPDE}, where by increasing the number of terms in the approximate solution, we make sure that the function $E(q) = || u_N^{q} - u^* ||_{L^2}$ solely describes the model error with minimum discretization error. The proposed iterative algorithm, as will be discussed later, involves the gradient of model error with respect to the model parameters. Thus, we take the partial derivative of $E$ with respect to $q$, as 
\begin{align}
\label{Eq: model error type I derivative}
S_{E,q}=\frac{\partial E}{\partial q} 
= \frac{ \int_{\Omega} S_{u^{q},q} \, (u_N^{q} - u^*) \,\, d\Omega}{E}
\end{align}
where ${S}_{u^{q},q}$ denotes the sensitivity fields, which is numerically obtained by solving FSEs \eqref{Eq: FSE}. We note that in this case, since $f$ is fixed and therefore, not sensitive to any parameter, we exclude the first term in the definition of force functions $\text{f}_{\alpha}$ and $\text{f}_{\beta}$.

%
\subsubsection{Model Error: type-II}
\label{Sec: Model Error Type II}
%
In model error type-II, we consider the input of model to be fixed, i.e, $u = u^*$, however, changing parameters makes the fractional model to result in the variated output $f^{q}$ as opposed to $f^{*}$. Therefore, we define the model error as the difference between variated and exact outputs, i.e. $E(q) = || f^{q} - f^* ||_{L^2}$. The schematic of variation of model from the exact model is shown in Fig. \ref{Fig: Model Error} (b). In this case, unlike model error type-I, the model error and its gradient can be expressed analytically. Therefore, they do not contain any  discretization error.

%
\subsection{\textbf{Model Error Minimization: Iterative Algorithm}}
\label{Sec: Iterative solver}
%

We minimize the model error by formulating a two-stages algorithm. Since we do not have prior information about the variated solution/force function, it is difficult to analytically predict the behavior of introduced model error. However, in every example, we numerically study the behavior of a low resolution model error manifold on a coarse grid, and then, perform the local minimization. The minimization problem is written as: 
\begin{align}
\underset{q \,  \in \, Q}{\min} \,  \Big( \, E(q) \, \Big)  ,
\end{align}
in which $E(q):Q \rightarrow \mathbb{R}$, and we assume that the problem is solvable, i.e. there exist a minimum point $q^* \in Q$. Proper choice of initial guess in local minimization is of great importance, where a wrong initial guess, not falling within small enough adjacency of minimum, may never converge. Therefore, the iterative convergence in a hypercube space of parameters is highly connected to an optimal initial guess for each parameter. In the sequel, we delineate the two stages of our algorithm, namely, stage I: \textit{coarse grid} searching, and stage II: \textit{nearby solution}.

In stage I, we progressively divide the hypercube parameter space into subspaces to narrow down the objective search region into a smaller region. This division process is not necessarily unique and can be done in different ways, among which we discuss the easy-to-implement one here, where in each progression step, we choose the subspace with minimum error at its corner. We carry out the coarse grid searching till we reach a small enough region, in which the nearby solution (stage II) is valid. As an example, we consider a $(1+1)$-D fractional model with $q^* = \{ \alpha^*, \beta^* \} =\{ 0.3, 0.8\}$ as the exact fractional indices in the parameter surface, shown in Fig. \ref{Fig: Coarse Grid Searching}. We divide the parameter space into four equal subspaces and by computing the error at corner points of each subsurface (black dots), we shrink the search region (to the labeled subsurface $3$). We progress further once again in a similar fashion, divide the subsurface, and compute the error at corner points (red dots). We finally, narrow down the search region into labeled subsurface $31$. We see that in this case, with computing the error only at 14 points, we can efficiently narrow down the parameter space into a small enough search region, in which we can perform stage II of the algorithm.

%
%
\begin{figure}[htb]
	\centering
	\begin{tikzpicture}
	\node (img)  {\includegraphics[scale=1]{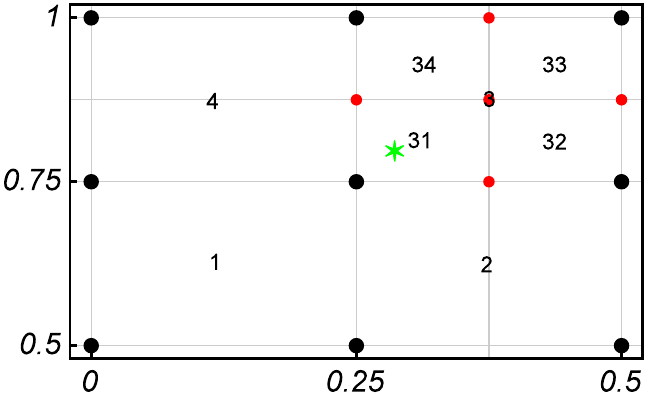}};
	\node[below=of img, node distance=0cm, xshift=0.3cm, yshift=1cm] {$\alpha$};
	\node[left=of img, node distance=0cm, rotate=90, xshift=0.4cm, yshift=-0.8cm] {$\beta$};
	\end{tikzpicture}
	\vspace{-0.15 in}
	\caption{Iterative algorithm: coarse grid searching for $(1+1)$-D parameter space, where $\alpha^* = 0.3$ and $\beta^* = 0.8$.}	
	\label{Fig: Coarse Grid Searching}
\end{figure}
%
%

In stage II of the algorithm, we employ a gradient decent method, in which by starting from an initial guess $q^0 =\{ \alpha^0 , \beta_1^0 , \beta_2^0, \cdots , \beta_d^0 \}$ in the obtained search region from stage I, we produce a minimizing sequence $q^i, \, i=1,2,\cdots$, where 
\begin{align}
q^{i+1} = q^{i} + \Delta q^{i} ,
\end{align}
and the increment $\Delta q^{i} = s^i p^i$ contains both the step size $s^i$ and normalized step direction $p^i$. The superscript $i$ indicates the iteration index. We obtain the normalized direction $p^i$ by computing the gradient of model error with respect to the parameters. The step size is usually computed by performing a line search such that $E(q^{i} + s \, p^{i})$ is minimized over $\forall s \in \mathbb{R}$. However, in our case the method does not produce well-scaled search directions, and we need to approximate the current step size, using the previous one. Thus,
\begin{align}
\label{Eq: direction and step sizes}
p^i = -\frac{\nabla E(q^{i})}{\Vert \nabla E(q^{i}) \Vert},
\qquad
s^{i} = s^{i-1} \frac{{\nabla E(q^{i-1})}^{T} \, p^{i-1}}{{\nabla E(q^{i})}^{T} \, p^{i}},
\end{align}
where the first iteration size is obtained, using the Taylor expansion of model error about $q^0$.

%
\subsection{Fractional Model Construction: FSEM-based Iterative Algorithm}
%
Let $\Omega = [0,T] \times [-1,1]$ be the computational domain. We consider the $(1+1)$-D case of FPDE \eqref{Eq: FPDE}, subject to the initial and boundary conditions \eqref{Eq: IC} and \eqref{Eq: BC}, respectively, where the adjoint FSEs are given in \eqref{Eq: FSE}. Assuming that the exact transport field $u^{*}(t,x)$ and force function $f^*(t,x)$ are given, then,
\begin{align}
	\label{Eq: FPDE ext}
	\prescript{}{0}{\mathcal{D}}_{t}^{\alpha^{*}} u^{*} 
	- k \left( \, \prescript{}{-1}{\mathcal{D}}_{x}^{\beta^{*}} + \, \prescript{}{x}{\mathcal{D}}_{1}^{\beta^{*}} \right) u^{*}
	= f^* 
\end{align}
in which $\lbrace \alpha^{*} , \beta^{*} \rbrace$ are the exact fractional indices and the coefficient $k$ is known.

By considering the two types of model error, we use the developed iterative formulation and follow Algorithm \ref{FPDE model error I} and Algorithm \ref{FPDE model error II} to obtain the optimal model parameters. In each iteration, the increments are obtained, using \eqref{Eq: direction and step sizes}.

\begin{algorithm}
	\caption{Fractional Model Construction: FSEM based Iterative Algorithm (Model Error type-I)}
	\label{FPDE model error I}
	\begin{algorithmic}[1]
		\STATE{Initial guess $q^0 = \lbrace \alpha^0,\beta^0 \rbrace$ }
		\STATE \textbf{Do} $i=0,1,\cdots$
		\STATE \quad Solve for $u^{q^i}_N$: FPDE
		\STATE \quad Compute the model error $E = || u_N^{q^i} - u^* ||_{L^2}$
		\STATE \quad \quad \textbf{If} $E < \text{tolerance}$, \textbf{Then} Break, \textbf{Otherwise} Continue
		\STATE \quad Solve for sensitivity fields: FSEs
		\STATE \quad Compute the model error gradient using sensitivity field
		\STATE \quad Compute the iteration increment $\Delta q^i$
		\STATE \quad March in parameter space $q^{i+1} = q^i + \Delta q^i$
		\STATE \textbf{End}
	\end{algorithmic}
\end{algorithm}
\begin{algorithm}
	\caption{Fractional Model Construction: FSEM based Iterative Algorithm (Model Error type-II)}
	\label{FPDE model error II}
	\begin{algorithmic}[1]
	\STATE \quad Initial guess $q^0  = \lbrace \alpha^0,\beta^0 \rbrace$ 
	\STATE \textbf{Do} $i=0,1,\cdots$
	\STATE \quad Compute the model error $E = || \mathcal{L}^{q^i} u^* - \mathcal{L}^{q^*} u^* ||_{L^2}$ 
	\STATE \quad \quad \textbf{If} $E < \text{tolerance}$, \textbf{Then} Break, \textbf{Otherwise} Continue
	\STATE \quad Compute the model error gradient using sensitivity field (analytically available)
	\STATE \quad Compute the iteration increment $\Delta q^{i} $
	\STATE \quad March in parameter space $q^{i+1} = q^i + \Delta q^i$
	\STATE \textbf{End}
\end{algorithmic}
\end{algorithm}

\begin{rem}
	\label{Rem: Multi variable iteration increament}
	In the first iteration, we compute the step size, using the Taylor expansion of the model error about the initial guess ${\lbrace \alpha^0 , \beta^0 \rbrace}$, which we separate into two directions as
	%
	\begin{align}
	\label{Eq: Taylor Expansion of E - FPDE}
	E \Big|_{\lbrace \alpha^{*} , \beta^{*} \rbrace} \approx E \Big|_{\lbrace \alpha^{0} , \beta^0 \rbrace} + S_{E,\alpha} \Big|_{\lbrace \alpha^{0} , \beta^0 \rbrace} \,\, (\alpha^{*} - \alpha^{0})  ,
	\qquad 
	E \Big|_{\lbrace \alpha^{*} , \beta^{*} \rbrace} \approx E \Big|_{\lbrace \alpha^{0} , \beta^0 \rbrace} + S_{E,\beta} \Big|_{\lbrace \alpha^{0} , \beta^0 \rbrace} \,\, (\beta^{*} - \beta^0) .
	\end{align}
	Knowing that $E \Big|_{\lbrace \alpha^{*} , \beta^{*} \rbrace} = 0$, we obtain the parameters at next iterations as $\alpha^{1} = \alpha^0 + \Delta \alpha^0$ and $\beta^{1} = \beta^0 + \Delta \beta^0$, in which
	\begin{align}
	\label{Eq: FPDE iteration}
	\Delta\alpha^{0} \approx \, -\frac{E \Big|_{\lbrace \alpha^{0} , \beta^0 \rbrace}}{S_{E,\alpha} \Big|_{\lbrace \alpha^{0} , \beta^0 \rbrace}} \,\, ,
	\qquad \qquad
	\Delta\beta^{0} \approx \,  -\frac{E \Big|_{\lbrace \alpha^{0} , \beta^0 \rbrace}}{S_{E,\beta} \Big|_{\lbrace \alpha^{0} , \beta^0 \rbrace}}.
	\end{align}
\end{rem}
\vspace{0.2in}

%
\section{\textbf{Numerical Results}}
\label{Sec: numerical results part I}
%
In the first part of numerical results, we investigate the performance of developed PG scheme in solving FPDE and the adjoint FSEs. We consider the coupled $(1+1)$-d FPDE and FSEs with one-sided fractional derivative and $k=1$, as 
\begin{align}
\label{Eq: Numerical FPDE}
&
\prescript{}{0}{\mathcal{D}}_{t}^{\alpha} u 
- \prescript{}{-1}{\mathcal{D}}_{x}^{\beta} \, u 
= f ,
\\
\label{Eq: Numerical FSE temporal}
&
\prescript{}{0}{\mathcal{D}}_{t}^{\alpha} S_{u,\alpha} 
- \prescript{}{-1}{\mathcal{D}}_{x}^{\beta}	S_{u,\alpha} 
= S_{f,\alpha}
- \mathcal{A}_1(\alpha) \prescript{}{0}{\mathcal{D}}_{t}^{\alpha} u 
+ \prescript{LP}{0}{\mathcal{D}}_{t}^{\alpha} u,
\\ 
\label{Eq: Numerical FSE spatial}
& 
\prescript{}{0}{\mathcal{D}}_{t}^{\alpha} S_{u,\beta} 
- \prescript{}{-1}{\mathcal{D}}_{x}^{\beta} S_{u,\beta} 
= S_{f,\beta}
+ \mathcal{A}_2(\beta) \, \prescript{}{-1}{\mathcal{D}}_{x}^{\beta}  u
- \prescript{LP}{-1}{\mathcal{D}}_{x}^{\beta} u.
\end{align}
We consider two cases of exact solution as
\begin{itemize}
	\item Case I: $u^{ext}(t,x) =  t^{3+\alpha/2} \, \left((1+x)^{3+\beta/2}-\frac{1}{2}(1+x)^{4+\beta/2}\right)$,
	
	\item Case II: $u^{ext}(t,x) = t^{3+\alpha/2} (t-0.4)(t-0.9)  \,\, \left((1+x)^{3+\beta/2}-\frac{1}{2}(1+x)^{4+\beta/2}\right)$.
	
\end{itemize}
where $\alpha/2 = 0.25$, and $\beta/2 = 0.75$. The exact solution and sensitivity fields, obtained by taking $\frac{\partial}{\partial \alpha}$ and $\frac{\partial}{\partial \beta}$ of the exact solutions, are shown in Fig. \ref{Fig: uext suext Case I} and \ref{Fig: uext suext Case II} for the two cases I and II, respectively. We employ the developed PG method to solve FPDE \eqref{Eq: Numerical FPDE} and obtain $u_N$, which we use to construct the right hand side of adjoint FSEs. Then, we again employ the developed PG method to solve FSEs \eqref{Eq: Numerical FSE temporal} and \eqref{Eq: Numerical FSE spatial} and obtain the numerical sensitivity fields, $S_{N_{u,\alpha}} , \, S_{N_{u,\beta}}$. We study the $L^2$-norm convergence of our proposed method by increasing the number of basis functions, as shown in Fig. \ref{Fig: uext suext error case I}.

%
%
\begin{figure}[t]
	\centering
	\begin{subfigure}{0.32\textwidth}
		\centering
		\includegraphics[width=1\linewidth]{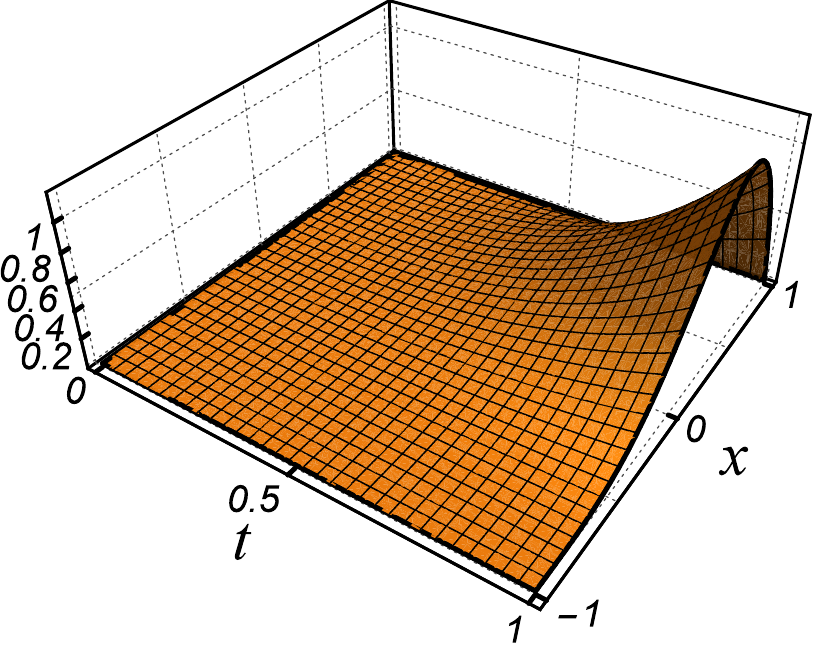}
	\end{subfigure}
	\begin{subfigure}{0.32\textwidth}
		\centering
		\includegraphics[width=1\linewidth]{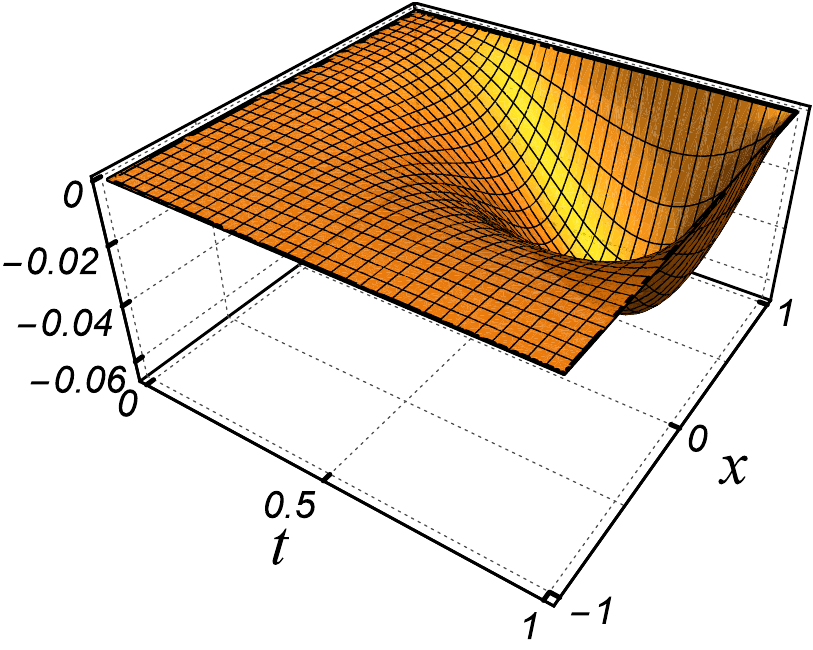}
	\end{subfigure}
	\begin{subfigure}{0.32\textwidth}
		\centering
		\includegraphics[width=1\linewidth]{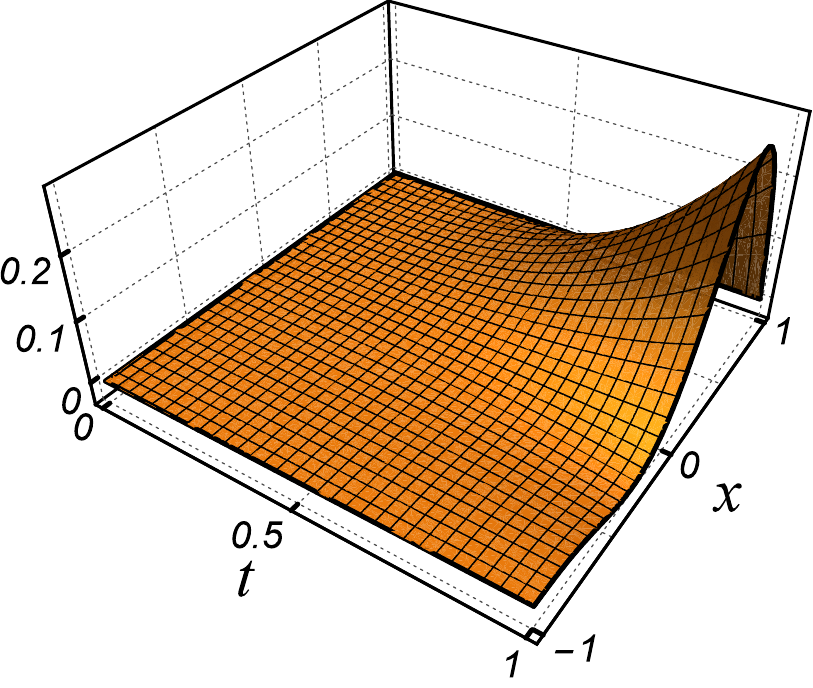}
	\end{subfigure}
	\vspace{-0.15 in}
	\caption{Plot of exact functions for case I with $\alpha/2 = 0.25$ and $\beta/2 = 0.75$: exact solution $u^{ext} $ (left), exact sensitivity field $S_{u^{ext},\alpha} = \frac{\partial u^{ext}}{\partial \alpha}$ (middle), exact sensitivity field $S_{u^{ext},\beta} = \frac{\partial u^{ext}}{\partial \beta}$ (right).}
	\label{Fig: uext suext Case I}
\end{figure}
%
%
\begin{figure}[t]
	\centering
	\begin{subfigure}{0.32\textwidth}
		\centering
		\includegraphics[width=1\linewidth]{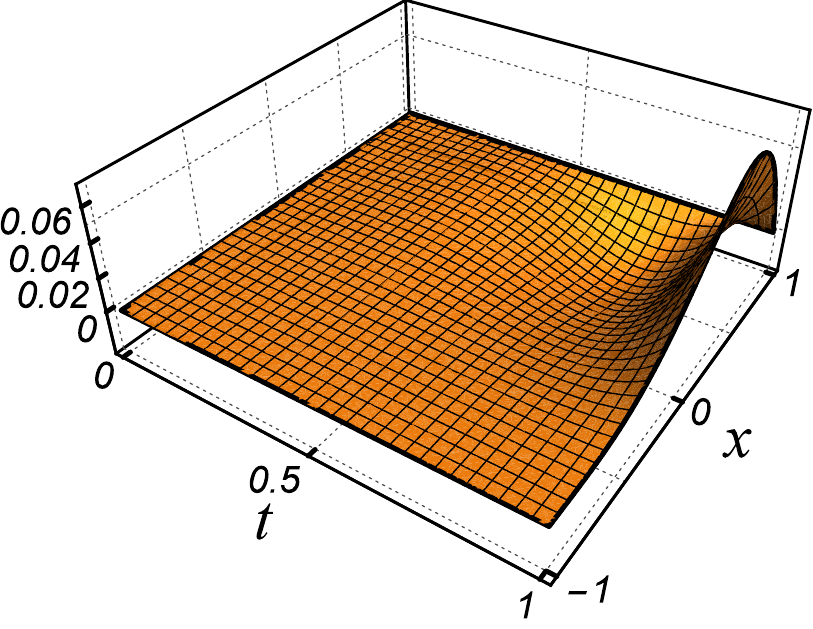}
	\end{subfigure}
	\begin{subfigure}{0.32\textwidth}
		\centering
		\includegraphics[width=1\linewidth]{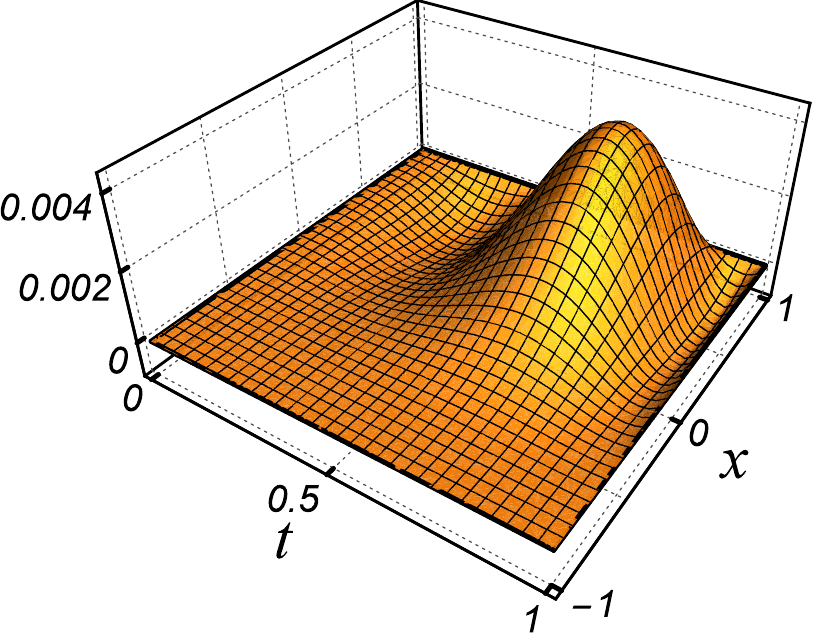}
	\end{subfigure}
	\begin{subfigure}{0.32\textwidth}
		\centering
		\includegraphics[width=1\linewidth]{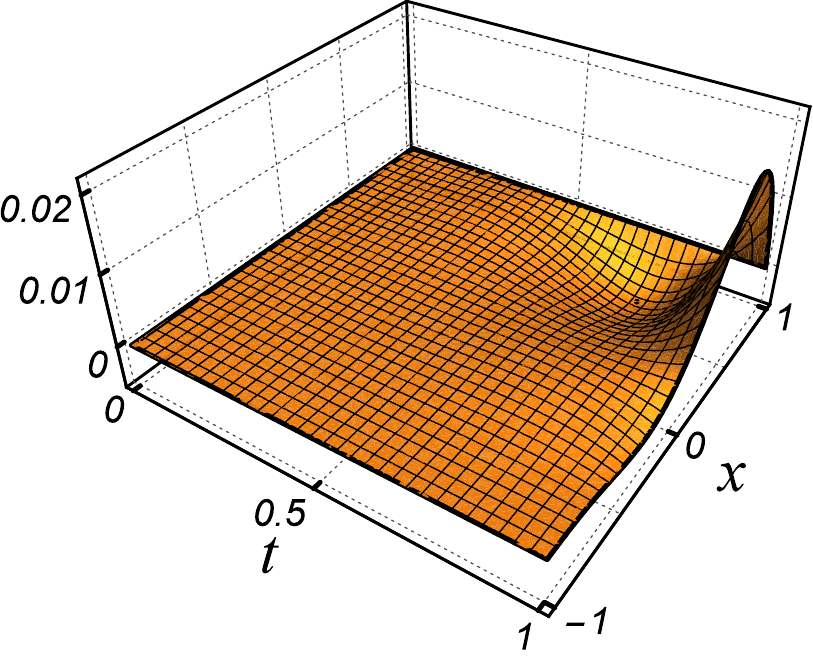}
	\end{subfigure}
	\vspace{-0.15 in}
	\caption{Plot of exact functions for case II with $\alpha/2 = 0.25$ and $\beta/2 = 0.75$: exact solution $u^{ext}$ (left), exact sensitivity field $S_{u^{ext},\alpha}$ (middle), exact sensitivity field $S_{u^{ext},\beta}$ (right).}
	\label{Fig: uext suext Case II}
\end{figure}
%

%
\begin{figure}[t]
	\centering
	\begin{subfigure}{0.49\textwidth}
		\centering
		\includegraphics[width=1\linewidth]{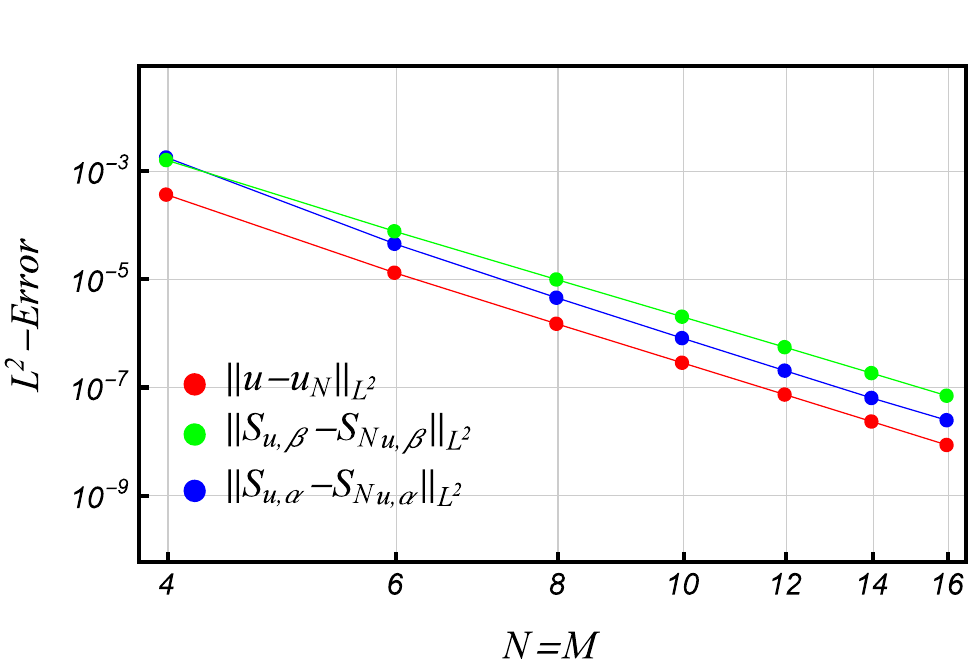}
	\end{subfigure}
	\begin{subfigure}{0.49\textwidth}
		\centering
		\includegraphics[width=1\linewidth]{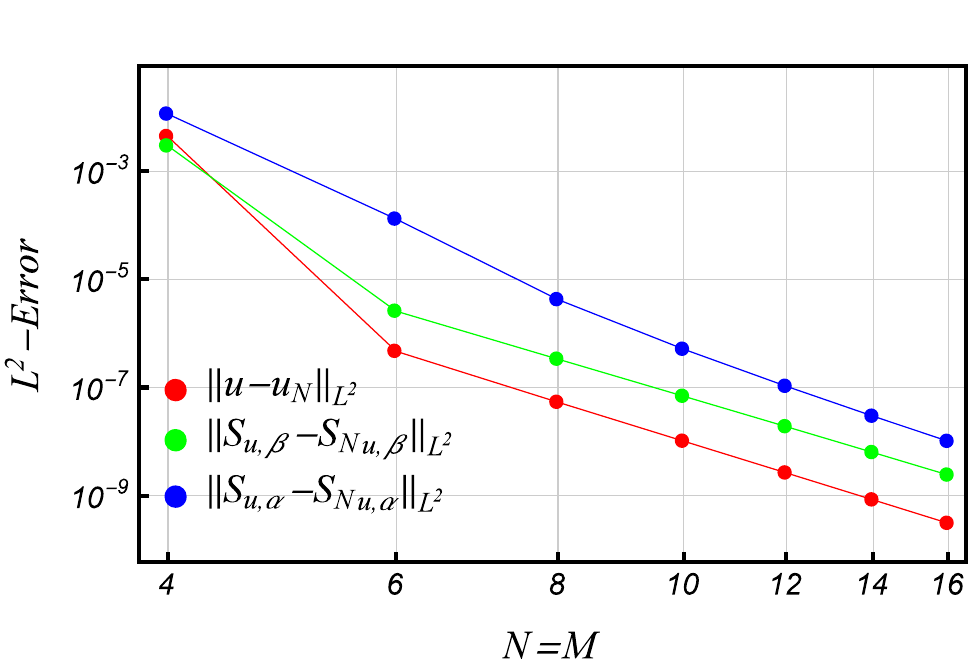}
	\end{subfigure}
	\vspace{-0.1 in}
	\caption{PG spectral method, $L^2$-norm convergence study: $(1+1)$-d FPDE adjoint to corresponding FSEs with one-sided fractional derivative, $k=1$, $\alpha/2 = 0.25$, and $\beta/2 = 0.75$, for Case I (left) and Case II (right), where $N = M$.}
	\label{Fig: uext suext error case I}
\end{figure}
%


$\bullet$ \textbf{Fractional Model Construction.} The second part of numerical results is dedicated to study the efficiency of developed iterative algorithm in obtaining the set of model parameters $q$ (fractional indices) and thus, construct the fractional model. We test our developed scheme by method of fabricated solution, assuming a given set of input (exact solution) and output (force term) for our fractional model.  

We begin with a fractional IVP of the form $\prescript{}{0}{\mathcal{D}}_{t}^{\alpha} u(t) = f(t)$, $\alpha\in(0,1) $, and assume that the exact solution and force function are given as, 
\begin{align*}
&u^{*}(t)= sin(5 \pi \, \alpha^*/2) \, t^{3 + \alpha^*/2},
\\
&f^{*}(t)= sin(5 \pi \, \alpha^*/2) \, \frac{\Gamma(4+\alpha^*/2)}{\Gamma(4-\alpha^*/2)}  \, t^{3 - \alpha^*/2},
\end{align*}
and the fractional order $\alpha$ is the unknown model parameter. We start from an initial guess $\alpha^0$ and use the developed iterative algorithm to converge to the true value of fractional index $\alpha$. We also consider a fractional BVP of the form $\prescript{}{-1}{\mathcal{D}}_{x}^{\beta} u(x) = f(x)$, $\beta\in(1,2)$, and assume that the exact solution and force function are given as,
\begin{align*}
&u^{*}(x) = (1+x)^{3+\beta^*/2}-\frac{1}{2}(1+x)^{4+\beta^*/2}
\\
&f^{*}(x) = \frac{\Gamma(4+\beta^*/2)}{\Gamma(4-\beta^*/2)} (1+x)^{3+\beta^*/2} - \frac{1}{2} \frac{\Gamma(5+\beta^*/2)}{\Gamma(5-\beta^*/2)}  (1+x)^{4+\beta^*/2}
\end{align*} 
and the fractional order $\beta$ is the unknown model parameter. We again use the developed iterative algorithm to capture the true value of fractional index $\beta$, starting from an initial guess $\beta^0$.

Tables \ref{Table: FIVP model construction} and \ref{Table: FBVP model construction} show two examples for each case of fractional IVP and BVP, where the true values of fractional orders are $\alpha^* = 0.3$, $\alpha^* = 0.9$, $\beta^* = 1.1$, and $\beta^* = 1.7$. We observe that the proposed iterative formulation converges accurately to the exact values with in few numbers of iterations. We note that in the case of fractional IVP and BVP, the search region is already small enough so that the nearby solution is valid, and therefore, we only need to perform the second stage of iterative algorithm.

\begin{table}[t]
	\center
	\caption{\label{Table: FIVP model construction} Fractional model construction for the two cases of fractional IVP.}
	\vspace{-0.1 in}
	\scalebox{1}{
		\begin{tabular}
			%
			{c@{\hspace*{.40in}} c@{\hspace*{.40in}} c }
			Iteration Index & \multicolumn{2}{c}{$\prescript{}{0}{\mathcal{D}}_{t}^{\alpha} u(t) = f(t)  $}   \\
			\hline
			\hline 
			i & $\alpha^i$ &  $\alpha^i$   \\
			\hline
			initial guess &  $0.300000$ & $0.900000$  \\
			\hline
			1 &  $0.971980$ & $0.320520$   \\
			\hline
			2 &  $0.900418$ & $0.300068$   \\
			\hline
			3 &  $0.900000$ & $0.300000$  \\
			\hline
			\hline
			True Value &  $0.9$ & $0.3$  \\
			\hline
			%
		\end{tabular}
	}
\end{table}
\begin{table}[t]
	\center
	\caption{\label{Table: FBVP model construction} Fractional model construction for the two cases of fractional BVP.}
	\vspace{-0.1 in}
	\scalebox{1}{
		\begin{tabular}
			%
			{c@{\hspace*{.40in}} c@{\hspace*{.40in}} c }
			Iteration Index  & \multicolumn{2}{c}{$\prescript{}{-1}{\mathcal{D}}_{x}^{\beta} u(x) = f(x)  $} \\
			\hline
			\hline 
			i &  $\beta^i$ & $\beta^i$ \\
			\hline
			initial guess & $1.100000$ & $1.9000000$  \\
			\hline
			1 & $1.882020$ & $1.228040$  \\
			\hline
			2 & $1.708120$ & $1.106096$  \\
			\hline
			3 & $1.700020$ & $1.100016$  \\
			\hline
			4 & $1.700000$ & $1.100000$  \\
			\hline
			\hline
			True Value & $1.7$ & $1.1$  \\
			\hline
			%
		\end{tabular}
	}
\end{table}
%


Moreover, we consider FPDE of the form $ \prescript{}{0}{\mathcal{D}}_{t}^{\alpha} u - k \prescript{}{-1}{\mathcal{D}}_{x}^{\beta} \, u = f  $. We assume the exact solution $u^{*} = t^{1+\alpha^*/2} \left( (1+x)^{3+\beta^*/2}-\frac{1}{2}(1+x)^{4+\beta^*/2} \right)$ and plug it into the FPDE with given $ \{\alpha^* , \beta^*\} $ to obtain the exact force function $f^*$. We study the example, in which, $ \{\alpha^* , \beta^*\} = \{0.1 , 1.64\}$. We perform the two stages of iterative algorithm, where in the first stage, we shrink down the search region 16 time smaller than the original size, by computing the model error at 8 points (See Fig. \ref{Fig: Iterative solver FDE 1}, right). Then, in the next stage, we start from the initial guess $\{ \alpha^0 ,\beta^0 \}=\{ 0.125,1.75 \}$, and observe that the developed iterative method converges to a close neighborhood of true values $\{0.1 , 1.64\}$ within $10^{-3}$ tolerance.
%
%
\begin{figure}[t]
	\centering
	\includegraphics[width=0.9\linewidth]{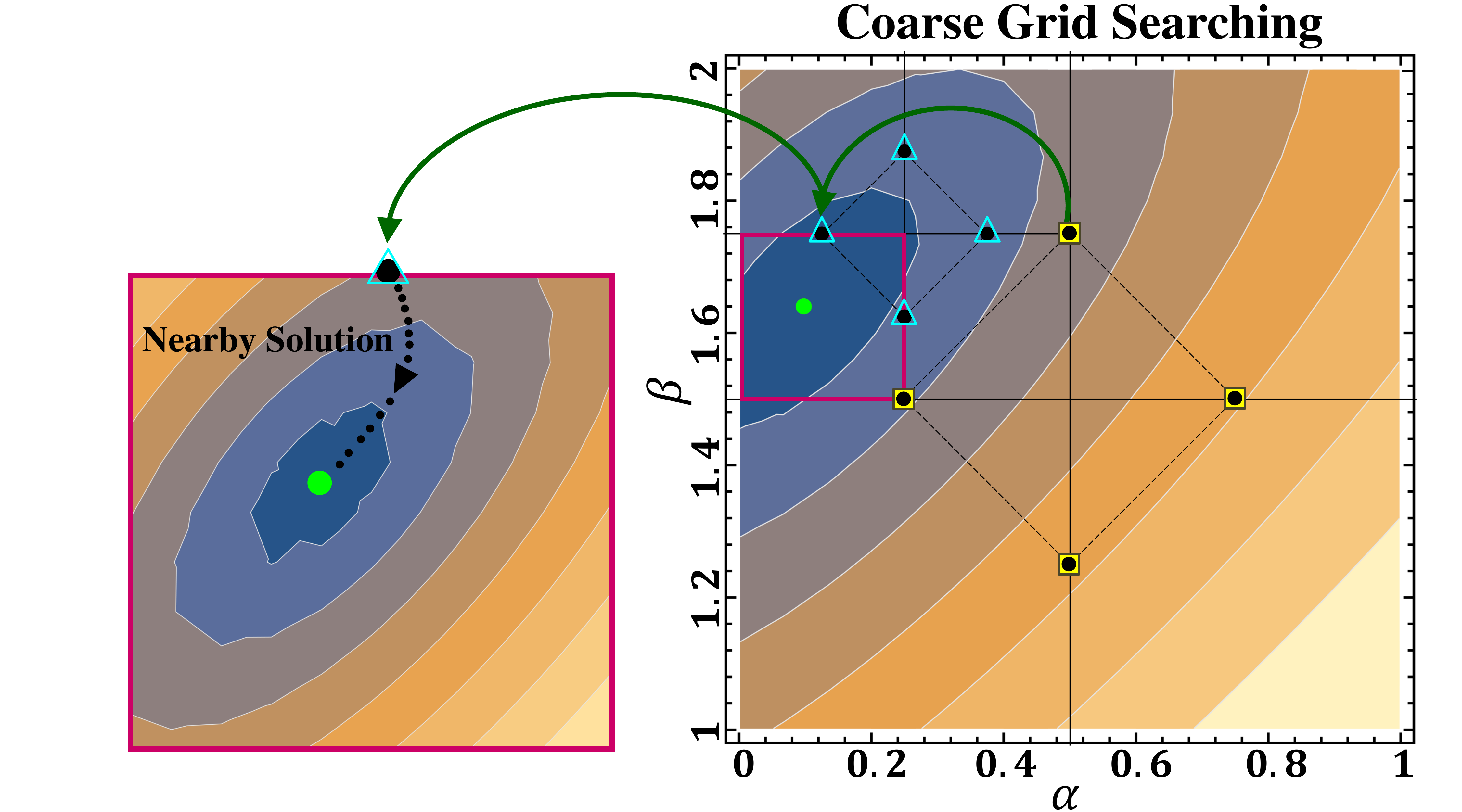}
	\vspace{-0.1 in}	
	\caption{Fractional model construction for the case FPDE, using FSEM based iterative algorithm. The true values of fractional indices are $ \{\alpha^* , \beta^*\} = \{0.1 , 1.64\}$.}
	\label{Fig: Iterative solver FDE 1}
\end{figure}
%

The developed model construction method can also be applied in formulating fractional models to study complex time-varying nonlinear fluid-solid interaction phenomena \cite{atanackovic2014,afzali2016vibrational,afzali2017analysis} and also the effect of damping in structural vibrations \cite{zamani2015asymmetric,abdullatif2018stabilizing}.


%
\section{Summary}
\label{Sec: Summary and Conclusion} 
%
We developed a fractional sensitivity equation method (FSEM) in order to analyze the sensitivity of fractional models (FIVPs, FBVPs, and FPDEs) with respect to their parameters. We derived the adjoint governing dynamics of sensitivity coefficients, i.e. fractional sensitivity equations (FSEs), by taking the partial derivative of FDE with respect to the model parameters, and showed that they preserve the structure of original FDE. We also introduced a new fractional operator, associated with logarithmic-power law kernel, for the first time in the context of FSEM. We extended the existing proper underlying function spaces to respect the extra regularities imposed by FSEs and proved the well-posedness of problem. Moreover, we developed a Petrov-Galerkin (PG) spectral method by employing Jacobi polyfractonomials and Legendre polynomials as basis/test functions, and proved its stability. We further used the developed FSEM to formulate an optimization problem in order to construct the fractional model by estimating the model parameters. We defined two types of model error as objective functions and proposed a two-stages search algorithm to minimize them. We presented the steps of iterative algorithm in a pseudo code. Finally, we examined the performance of proposed numerical scheme in solving coupled FPDE and FSEs, where we numerically study the convergence rate of error. We also investigated the efficiency of developed iterative algorithm in estimating the derivative order for different cases of fractional models.

%

\appendix
%
\section{Proof of Lemma \eqref{lem: Caputo vs. Riemann PowLog}}
\label{Sec: App. proof of RL Caputo PL derivative}
%

\noindent Part A: $\sigma \in (0,1)$. We start from the $RL-PL$ definition, given in \eqref{Eq: left sided LP kernel derivative}.
\begin{align}
\prescript{RL-LP}{a}{\mathcal{D}}_{x}^{\sigma} u 
&= 
\frac{1}{\Gamma(1-\sigma)}  \frac{d}{dx} \int_{a}^{x} \, (x - s)^{-\sigma} \, \log(x-s) \, u(s) \, ds, \, \text{ (integrate by parts)}
\\
\nonumber
&=
\frac{1}{\Gamma(1-\sigma)}  \frac{d}{dx}
\Big\lbrace
\frac{u(s) \, (x-s)^{1-\sigma}}{(-\sigma+1)^2} (1-(-\sigma+1) \, \log(x-s))  \Bigg|_{s=a}^{s=x}
\\
\nonumber
& \qquad
- \int_{a}^{x} \, \frac{(x - s)^{-\sigma+1}}{(-\sigma+1)^2}  \, (1-(-\sigma+1) \, \log(x-s)) \, u'(s) \, ds
\Big\rbrace , 
\\
\nonumber
&=
\frac{1}{\Gamma(1-\sigma)}  \frac{d}{dx}
\Big\lbrace
\frac{u(a) \, (x-a)^{1-\sigma}}{(-\sigma+1)^2} (1-(-\sigma+1) \, \log(x-a))
\\
\nonumber
& \qquad
- \int_{a}^{x} \, \frac{(x - s)^{-\sigma+1}}{(-\sigma+1)^2}  \, (1-(-\sigma+1) \, \log(x-s)) \, u'(s) \, ds
\Big\rbrace ,
\\
\nonumber
&=
\frac{u(a)}{\Gamma(1-\sigma)}  \, \frac{\log(x-a)}{(x-a)^{\sigma}} 
+ \frac{1}{\Gamma(1-\sigma)} \int_{a}^{x} \, \frac{\log(x-s)}{(x - s)^{-\sigma}} \, u'(s) \, ds, \, \text{ (by Leibnitz rule)}
\\
\nonumber
&=
\frac{u(a)}{\Gamma(1-\sigma)} \, \frac{\log(x-a)}{(x-a)^{\sigma}} 
+ \prescript{C-LP}{a}{\mathcal{D}}_{x}^{\sigma} u
\end{align}

\noindent Part B: $\sigma \in (1,2)$. Similarly, we start from the $RL-PL$ definition, given in \eqref{Eq: left sided LP kernel derivative}.
\begin{align}
\prescript{RL-LP}{a}{\mathcal{D}}_{x}^{\sigma} u 
&= 
\frac{1}{\Gamma(2-\sigma)}  \frac{d^2}{dx^2} \int_{a}^{x} \, (x - s)^{-\sigma+1} \, \log(x-s) \, u(s) \, ds, \, \text{ (integrate by parts twice)}
\\
\nonumber
&=
\frac{1}{\Gamma(2-\sigma)}  \frac{d^2}{dx^2}
\Big\lbrace
\frac{u(s) \, (x-s)^{-\sigma+2}}{(-\sigma+2)^2} (1-(-\sigma+2)\log(x-s)) \Bigg|_{s=a}^{s=x}
\\
\nonumber
& \qquad
- \frac{u'(s) \, (x-s)^{-\sigma+3}}{(-\sigma+2)^2(-\sigma+3)^2}  \left(1-2(-\sigma+3)+(-\sigma+3)(-\sigma+2)\log(x-s) \right) \Bigg|_{s=a}^{s=x}
\\
\nonumber
& \qquad
+ \int_{a}^{x} \, \frac{(x-s)^{-\sigma+3}}{(-\sigma+2)^2(-\sigma+3)^2}  \, \left(1-2(-\sigma+3)+(-\sigma+3)(-\sigma+2)\log(x-s) \right) \, u''(s) \, ds
\Big\rbrace,
\\
\nonumber
&=
\frac{1}{\Gamma(2-\sigma)}  \frac{d^2}{dx^2}
\Big\lbrace
\frac{u(a) \, (x-a)^{-\sigma+2}}{(-\sigma+2)^2} (1-(-\sigma+2)\log(x-a))
\\
\nonumber
& \qquad
- \frac{u'(a) \, (x-a)^{-\sigma+3}}{(-\sigma+2)^2(-\sigma+3)^2}  \left(1-2(-\sigma+3)+(-\sigma+3)(-\sigma+2)\log(x-a) \right)
\\
\nonumber
& \qquad
+ \int_{a}^{x} \, \frac{(x-s)^{-\sigma+3}}{(-\sigma+2)^2(-\sigma+3)^2}  \, \left(1-2(-\sigma+3)+(-\sigma+3)(-\sigma+2)\log(x-s) \right) \, u''(s) \, ds
\Big\rbrace,
\\
\nonumber
&=
\frac{u(a)}{\Gamma(2-\sigma)} \frac{1+(-\sigma+1)\log(x-a)}{(x-a)^{\sigma}} 
+ \frac{u'(a)}{\Gamma(2-\sigma)} \frac{\log(x-a)}{(x-a)^{\sigma-1}} 
\\
\nonumber
& \qquad 
+ \frac{1}{\Gamma(2-\sigma)} \int_{a}^{x} \, (x-s)^{-\sigma+1} \,  \log(x-s) \, u''(s) \, ds ,  \, \text{ (by Leibnitz rule)}
\\
\nonumber
&=
\frac{u(a)}{\Gamma(1-\sigma)} \frac{1+(-\sigma+1)\log(x-a)}{(x-a)^{\sigma}} + \frac{u'(a)}{\Gamma(1-\sigma)} \frac{\log(x-a)}{(x-a)^{\sigma-1}}  +   \prescript{C-PL}{a}{\mathcal{D}}_{x}^{\sigma} u.
\end{align}
%

%
\section{Proof of Lemma \eqref{Lem: norm equivalence 1}}
\label{Sec: App. proof of norm equivalence}
%

In Lemma 2.1 in \cite{Li2010} and also in \cite{ervin2007variational}, it is shown that $\Vert \cdot \Vert_{{^l}H^{\sigma}_{}(\Lambda)}$ and $\Vert \cdot \Vert_{{^r}H^{\sigma}_{}(\Lambda)}$ are equivalent. Therefore, for $u \in H^{\sigma}_{}(\Lambda)$, there exist positive constants $C_1$ and $C_2$ such that
\begin{align}
\Vert u \Vert_{{}H^{\sigma}_{}(\Lambda)} \leq C_1 \Vert u \Vert_{{^l}H^{\sigma}_{}(\Lambda)},
\quad
\Vert u \Vert_{{}H^{\sigma}_{}(\Lambda)} \leq C_2 \Vert u \Vert_{{^r}H^{\sigma}_{}(\Lambda)}, 
\end{align}	
which leads to
\begin{align}
\Vert u \Vert_{{}H^{\sigma}_{}(\Lambda)}^2 &\leq C_1^2 \Vert u \Vert_{{^l}H^{\sigma}_{}(\Lambda)}^2 +  C_2^2 \Vert u \Vert_{{^r}H^{\sigma}_{}(\Lambda)}^2 ,
\nonumber
\\
&=  C_1^2 \,\Vert \prescript{}{a}{\mathcal{D}}_{x}^{\sigma}\, (u)\Vert_{L^2(\Lambda)}^2+ C_2^2 \,\Vert \prescript{}{x}{\mathcal{D}}_{b}^{\sigma}\, (u)\Vert_{L^2(\Lambda)}^2+(C_1^2+C_2^2)\, \Vert u \Vert_{L^2(\Lambda)}^2 ,
\nonumber
\\
&\leq \tilde{C}_1 \,\Vert u \Vert_{{^c}H^{\sigma}_{}(\Lambda)}^2,
\end{align}	
where $\tilde{C}_1$ is a positive constant. Similarly, we can show that $\Vert u \Vert_{{^c}H^{\sigma}_{}(\Lambda)}^2 \leq \tilde{C}_2 \, \Vert u \Vert_{{}H^{\sigma}_{}(\Lambda)}$, where $\tilde{C}_2$ is a positive constant.

%
\section{Proof of Lemma \eqref{space norm 1}}
\label{Sec: App. proof of norm Xd}
%
$\mathcal{X}_1$ is endowed with the norm $\Vert \cdot \Vert_{\mathcal{X}_1}$, where $\Vert \cdot \Vert_{\mathcal{X}_1}\equiv \Vert \cdot \Vert_{{^c}H^{\beta_1/2}_{}(\Lambda_1)}$ by Lemma \ref{Lem: norm equivalence 1}. Moreover, $\mathcal{X}_2$ is associated with the norm 
\begin{equation}
\label{eq234}
\Vert \cdot \Vert_{\mathcal{X}_2} \equiv \bigg{\{} \Vert \cdot \Vert_{{^c}H^{\beta_2/2}_0 \Big((a_2,b_2); L^2(\Lambda_{1}) \Big)}^2 + \Vert \cdot \Vert_{ L^2\Big((a_2,b_2); \mathcal{X}_{1}\Big)}^2 \bigg{\}}^{\frac{1}{2}},
\end{equation}	
where
\begin{align}
\Vert u \Vert_{{^c}H^{\beta_2/2}_0 \Big((a_2,b_2); L^2(\Lambda_{1}) \Big)}^2 
&= \int_{a_1}^{b_1}\, \Big( \int_{a_2}^{b_2}\,  \vert \prescript{}{a_2}{\mathcal{D}}_{x_2}^{\beta_2/2} u  \vert^2  \,  dx_2 + \int_{a_2}^{b_2}\,  \vert \prescript{}{x_2}{\mathcal{D}}_{b_2}^{\beta_2/2} u \vert^2  \,  dx_2 + \int_{a_2}^{b_2}\,  \vert  u \vert^2  \,  dx_2 \Big) \,dx_1
\nonumber
\\
&= \int_{a_1}^{b_1}\int_{a_2}^{b_2}\,  \vert \prescript{}{a_2}{\mathcal{D}}_{x_2}^{\beta_2/2} u  \vert^2  \,  dx_2 dx_1 + \int_{a_1}^{b_1}\int_{a_2}^{b_2}\,  \vert \prescript{}{x_2}{\mathcal{D}}_{b_2}^{\beta_2/2} u \vert^2  \,  dx_2 dx_1
+ \int_{a_1}^{b_1} \int_{a_2}^{b_2}\,  \vert  u \vert^2  \,  dx_2 dx_1
\nonumber
\\
&=
\Vert \prescript{}{x_2}{\mathcal{D}}_{b_2}^{\beta_2/2}\, (u)\Vert_{L^2(\Lambda_d)}^2+\Vert \prescript{}{a_2}{\mathcal{D}}_{x_2}^{\beta_2/2}\, (u)\Vert_{L^2(\Lambda_d)}^2+\Vert u \Vert_{L^2(\Lambda_d)}^2,
\end{align}
and
\begin{align}
&\Vert u \Vert_{L^2\Big((a_2,b_2); \mathcal{X}_{1}\Big)}^2
\nonumber
\\
&= \int_{a_2}^{b_2}\, \Big( \int_{a_1}^{b_1}\,  \vert \prescript{}{a_1}{\mathcal{D}}_{x_1}^{\beta_1/2} u  \vert^2  \,  dx_1 + \int_{a_1}^{b_1}\,  \vert \prescript{}{x_1}{\mathcal{D}}_{b_1}^{\beta_1/2} u \vert^2  \,  dx_1 + \int_{a_1}^{b_1}\,  \vert u \vert^2  \,  dx_1 \Big) \,dx_2
\nonumber
\\
&= \int_{a_2}^{b_2}\int_{a_1}^{b_1}  \vert \prescript{}{a_1}{\mathcal{D}}_{x_1}^{\beta_1/2} u  \vert^2    dx_1 dx_2 + \int_{a_2}^{b_2}\int_{a_1}^{b_1}  \vert \prescript{}{x_1}{\mathcal{D}}_{b_1}^{\beta_1/2} u \vert^2    dx_1 dx_2
+ \int_{a_2}^{b_2}\int_{a_1}^{b_1}  \vert  u \vert^2    dx_1 dx_2
\nonumber
\\
&=
\Vert \prescript{}{x_1}{\mathcal{D}}_{b_1}^{\beta_1/2}\, u\Vert_{L^2(\Lambda_2)}^2+\Vert \prescript{}{a_1}{\mathcal{D}}_{x_1}^{\beta_1/2}\, u\Vert_{L^2(\Lambda_2)}^2+\Vert u \Vert_{L^2(\Lambda_2)}^2.
\end{align}
We use the mathematical induction to carry out the proof. Therefore, we assume the following equality holds
\begin{equation}
\label{eq23421}
\Vert \cdot \Vert_{\mathcal{X}_{k-1}} \equiv \bigg{\{} \sum_{i=1}^{k-1} \Big(\Vert \prescript{}{x_i}{\mathcal{D}}_{b_i}^{\beta_i/2}\, (\cdot)\Vert_{L^2(\Lambda_{k-1})}^2+\Vert \prescript{}{a_i}{\mathcal{D}}_{x_i}^{\beta_i/2}\, (\cdot)\Vert_{L^2(\Lambda_{k-1})}^2 \Big) + \Vert  \cdot \Vert_{L^2(\Lambda_{k-1})}^2 \bigg{\}}^{\frac{1}{2}}.
\end{equation}
Since,
\begin{align*}
&\Vert u \Vert_{{^c}H^{\beta_k/2}_0 \Big((a_k,b_k); L^2(\Lambda_{k-1}) \Big)}^2 
\nonumber
\\
&= \int_{\Lambda_{k-1}}^{}\, \Big( \int_{a_k}^{b_k}\,  \vert \prescript{}{a_k}{\mathcal{D}}_{x_k}^{\beta_k/2} u  \vert^2  \,  dx_k + \int_{a_k}^{b_k}\,  \vert \prescript{}{x_k}{\mathcal{D}}_{b_k}^{\beta_k/2} u \vert^2  \,  dx_k + \int_{a_k}^{b_k}\,  \vert u \vert^2  \,  dx_k \Big) \,d\Lambda_{k-1}
\nonumber
\\
&= \int_{\Lambda_{k-1}}^{}\int_{a_k}^{b_k}\,  \vert \prescript{}{a_k}{\mathcal{D}}_{x_k}^{\beta_k/2} u  \vert^2  \,  dx_k d\Lambda_{k-1} + \int_{\Lambda_{k-1}}^{}\int_{a_k}^{b_k}\,  \vert \prescript{}{x_k}{\mathcal{D}}_{b_k}^{\beta_k/2} u \vert^2  \,  dx_k d\Lambda_{k-1} + \int_{\Lambda_{k-1}}^{}\int_{a_k}^{b_k}\,  \vert u \vert^2  \,  dx_k d\Lambda_{k-1}
\nonumber
\\
&=
\Vert \prescript{}{x_k}{\mathcal{D}}_{b_k}^{\beta_k/2}\, (u)\Vert_{L^2(\Lambda_k)}^2+\Vert \prescript{}{a_k}{\mathcal{D}}_{x_k}^{\beta_k/2}\, (u)\Vert_{L^2(\Lambda_k)}^2+\Vert u \Vert_{L^2(\Lambda_k)}^2,
\end{align*}
and
\begin{align*}
\Vert u \Vert_{L^2\Big((a_k,b_k); \mathcal{X}_{k-1}\Big)}^2 
&= \int_{a_k}^{b_k} 
\left(  
\sum_{i=1}^{k-1} \left(
\int_{\Lambda_{k-1}}^{}  \vert \prescript{}{a_i}{\mathcal{D}}_{x_i}^{\beta_i/2} u  \vert^2   d\Lambda_{k-1} + \int_{\Lambda_{k-1}}^{}  \vert \prescript{}{x_i}{\mathcal{D}}_{b_i}^{\beta_i/2} u \vert^2   d\Lambda_{k-1} \right)
+ \int_{\Lambda_{k-1}}^{}  \vert u \vert^2    d\Lambda_{k-1} 
\right) dx_k
\nonumber
\\
&= \sum_{i=1}^{k-1} \Big( \int_{\Lambda_k}^{} \vert \prescript{}{a_i}{\mathcal{D}}_{x_i}^{\beta_i/2} u  \vert^2    d\Lambda_k +  \int_{\Lambda_k}^{}  \vert \prescript{}{x_i}{\mathcal{D}}_{b_i}^{\beta_i/2} u \vert^2    d\Lambda_k \Big)
+ \int_{\Lambda_k}^{}  \vert  u \vert^2     d\Lambda_k
\nonumber
\\
&=
\sum_{i=1}^{k-1} \Big( \Vert \prescript{}{x_i}{\mathcal{D}}_{b_i}^{\beta_i/2}\, u\Vert_{L^2(\Lambda_k)}^2+\Vert \prescript{}{a_i}{\mathcal{D}}_{x_i}^{\beta_i/2}\, u\Vert_{L^2(\Lambda_k)}^2 \Big)+\Vert u \Vert_{L^2(\Lambda_k)}^2,
\end{align*}
we can show that 
\begin{equation}
\Vert \cdot \Vert_{\mathcal{X}_{k}} \equiv \bigg{\{} \sum_{i=1}^{k} \Big(\Vert \prescript{}{x_i}{\mathcal{D}}_{b_i}^{\beta_i/2}\, (\cdot)\Vert_{L^2(\Lambda_{k})}^2+\Vert \prescript{}{a_i}{\mathcal{D}}_{x_i}^{\beta_i/2}\, (\cdot)\Vert_{L^2(\Lambda_{k})}^2 \Big) + \Vert  \cdot \Vert_{L^2(\Lambda_{k})}^2 \bigg{\}}^{\frac{1}{2}}.
\end{equation}
%

%
\section{Proof of Lemma \eqref{lem_generalize}}
\label{Sec: App. proof of gen int by part}
%
According to \cite{kharazmi2017petrov}, we have $\prescript{}{a_i}{\mathcal{D}}_{x_i}^{\beta_i} u=\prescript{}{a_i}{\mathcal{D}}_{x_i}^{\beta_i/2} (\prescript{}{a_i}{\mathcal{D}}_{x_i}^{\beta_i/2} u)$ and $\prescript{}{x_i}{\mathcal{D}}_{b_i}^{\beta_i/2} u=\prescript{}{x_i}{\mathcal{D}}_{b_i}^{\beta_i/2}(\prescript{}{x_i}{\mathcal{D}}_{b_i}^{\beta_i/2} u)$. Let $\bar{u}=\prescript{}{a_i}{\mathcal{D}}_{x_i}^{\beta_i/2} u$. Then,
\begin{eqnarray}
(\prescript{}{a_i}{\mathcal{D}}_{x_i}^{\beta_i} u,v)_{\Lambda_d}&=& (\prescript{}{a_i}{\mathcal{D}}_{x_i}^{\beta_i/2} \bar{u},v)_{\Lambda_d}=\int_{\Lambda_d}^{} \frac{1}{\Gamma(1-\beta_i/2)}  \big[ \frac{d}{dx_i}\, \int_{a_i}^{x_i}\frac{\bar{u}(s)\,ds}{(x_i-s)^\beta_i/2}  \big]v\,d\Lambda_d
\nonumber
\\
&=& \Big{\{} \frac{v}{\Gamma(1-\beta_i/2)\int_{a_i}^{x_i} \frac{\bar{u}ds}{(x_i-s)^{\beta_i/2}}} \Big{\}}^{b_i}_{x_i=a_i} - \int_{\Lambda_d}^{} \frac{1}{\Gamma(1-\beta_i/2)}\int_{a_i}^{x_i}\frac{\bar{u}(s)\,ds}{(x_i-s)^{\beta_i/2}} \frac{dv}{dx_i}\, d\Lambda_d. \quad \quad
\end{eqnarray}
Based on the homogeneous boundary conditions, $\Big{\{} \frac{v}{\Gamma(1-\beta_i/2)\int_{a_i}^{x_i} \frac{\bar{u}ds}{(x_i-s)^{\beta_i/2}}} \Big{\}}^{b_i}_{x_i=a_i}=0.$ Therefore, 
\begin{eqnarray}
(\prescript{}{a_i}{\mathcal{D}}_{x_i}^{\beta_i} u,v)_{\Lambda_d}&=&-\int_{\Lambda_i}^{}\frac{1}{\Gamma(1-\beta_i/2)}\int_{a_i}^{x_i}\frac{\bar{u}(s)\,ds}{(x_i-s)^{\beta_i/2}} \frac{dv}{dx_i}\, d\Lambda_i.  
\end{eqnarray}
Moreover, we find that 
\begin{eqnarray}
\frac{d}{ds}\, \int_{a_i}^{b_i}\frac{u}{(x_i-s)^{\beta_i/2}}dx_i &=& \frac{d}{ds} \Big{\{} \{\frac{v\, (x_i-s)^{1-\beta_i/2}}{1-\beta_i/2}\}_{x_i=s_i}^{b_i}-\frac{1}{1-\beta_i/2}\int_{s}^{b_i}\frac{dv}{dx_i}(x_i-s)^{1-\beta_i/2}dx_i \Big{\}}
\nonumber
\\
&=&
-\frac{1}{1-\beta_i/2} \int_{s}^{b_i} \frac{dv}{dx_i}(x_i-s)^{1-\beta_i/2}\, dx_i = \int_{s}^{b_i} \frac{\frac{dv}{dx_i}}{(x_i-s)^{\beta_i/2}}\, dx_i.
\end{eqnarray}
Therefore, we get
\begin{eqnarray}
(\prescript{}{a_i}{\mathcal{D}}_{x_i}^{\beta_i/2} \bar{u},v)_{\Lambda_d}=-\int_{\Lambda_d}^{} \frac{1}{\Gamma(1-\nu)_i}\, \bar{u}(s) \big(-\frac{d}{ds} \int_{s}^{b_i}\frac{v}{(x_i-s)^{\beta_i/2}}dx_i\big)\,ds=( \bar{u},\prescript{}{x_i}{\mathcal{D}}_{b_i}^{\beta_i/2}v)_{\Lambda_d}. \quad
\nonumber
\end{eqnarray}


%
\section{Proof of Lemma \eqref{norm_223}}
\label{Sec: App. proof of norm time}
%
We know that 
\begin{align*}
\big\vert \left( \prescript{}{0}{\mathcal{D}}_{t}^{\alpha/2} u, \prescript{}{t}{\mathcal{D}}_{T}^{\alpha/2} v \right)_{\Omega} \big\vert
=
\Big( \int_{\Lambda_d}^{} \int_{0}^{T}  \vert \prescript{}{0}{\mathcal{D}}_{t}^{\alpha/2} u \, \prescript{}{t}{\mathcal{D}}_{T}^{\alpha/2} v \vert^2\, dt d\Lambda_d \Big)^{\frac{1}{2}}. 
\end{align*}
Therefore, by H\"{o}lder inequality
\begin{align*}
&
\big\vert \left( \prescript{}{0}{\mathcal{D}}_{t}^{\alpha/2} u, \prescript{}{t}{\mathcal{D}}_{T}^{\alpha/2} v \right)_{\Omega} \big\vert
\\
&\leq \Big( \int_{\Lambda_d}^{} \int_{0}^{T}  \vert \prescript{}{0}{\mathcal{D}}_{t}^{\alpha/2} u \vert^2\, dt d\Lambda_d \Big)^{\frac{1}{2}} \, \Big( \int_{\Lambda_d}^{} \int_{0}^{T}  \vert \prescript{}{t}{\mathcal{D}}_{T}^{\alpha/2} v \vert^2\, dt d\Lambda_d \Big)^{\frac{1}{2}} 
\\
&\leq
\Big( \int_{\Lambda_d}^{} \int_{0}^{T}  \vert \prescript{}{0}{\mathcal{D}}_{t}^{\alpha/2} u \vert^2\, dt d\Lambda_d + \int_{\Lambda_d}^{} \int_{0}^{T}  \vert u \vert^2\, dt d\Lambda_d \Big)^{\frac{1}{2}} \, \Big( \int_{\Lambda_d}^{} \int_{0}^{T}  \vert \prescript{}{t}{\mathcal{D}}_{T}^{\alpha/2} v \vert^2\, dt d\Lambda_d + \int_{\Lambda_d}^{} \int_{0}^{T}  \vert  v \vert^2\, dt d\Lambda_d \Big)^{\frac{1}{2}} 
\\
&= \Vert \prescript{}{0}{\mathcal{D}}_{t}^{\alpha/2} u \Vert_{L^2(\Omega)} \, \Vert \prescript{}{t}{\mathcal{D}}_{T}^{\alpha/2} v \Vert_{L^2(\Omega)} = \Vert u \Vert_{\prescript{l}{}H^{\alpha/2}(I; L^2(\Lambda_d))} \, \Vert v \Vert_{\prescript{r}{}H^{\alpha/2}(I; L^2(\Lambda_d))}.
\end{align*}
Moreover, by equivalence of $\vert \cdot \vert_{H^{s}(I)} \equiv \vert \cdot \vert^{*}_{H^{s}(I)} = \vert \cdot \vert^{1/2}_{{^l}H^{s}(I)} \vert \cdot \vert^{1/2}_{{^r}H^{s}(I)} $ we have
\begin{eqnarray}
\vert( \prescript{}{0}{\mathcal{D}}_{t}^{\alpha/2} u, \prescript{}{t}{\mathcal{D}}_{T}^{\alpha/2} v)_{I}\vert&=&\int_{0}^{T}  \vert \prescript{}{0}{\mathcal{D}}_{t}^{\alpha/2} u \, \prescript{}{t}{\mathcal{D}}_{T}^{\alpha/2} v \vert^2\, dt
\nonumber
\\
&\geq& 
\int_{0}^{T}  \vert \prescript{}{0}{\mathcal{D}}_{t}^{\alpha/2} u \vert^2 dt \, \int_{0}^{T} \vert \prescript{}{t}{\mathcal{D}}_{T}^{\alpha/2} v \vert^2\, dt
\geq \tilde{\beta}_1 \Vert u \Vert_{{^l}H^{s}(I)} \Vert v \Vert_{{^r}H^{s}(I)},
\end{eqnarray}  
where $0<\tilde{\beta}_1\leq 1$. Therefore,
\begin{eqnarray}
\vert( \prescript{}{0}{\mathcal{D}}_{t}^{\alpha/2} u, \prescript{}{t}{\mathcal{D}}_{T}^{\alpha/2} v)_{\Omega} \vert^2 
&=&
\int_{\Lambda_d}^{} \int_{0}^{T}  \vert \prescript{}{0}{\mathcal{D}}_{t}^{\alpha/2} u \, \prescript{}{t}{\mathcal{D}}_{T}^{\alpha/2} v \vert^2\, dt \, d\Lambda_d  
\nonumber
\\
&\geq& 
\int_{\Lambda_d}^{}  \Big{(}\int_{0}^{T}  \vert \prescript{}{0}{\mathcal{D}}_{t}^{\alpha/2} u \vert^2 dt \, \int_{0}^{T} \vert \prescript{}{t}{\mathcal{D}}_{T}^{\alpha/2} v \vert^2\, dt\Big{)} \, d\Lambda_d
\nonumber
\\
&\geq& \bar{\beta} \int_{\Lambda_d}^{}  \int_{0}^{T}  \vert \prescript{}{0}{\mathcal{D}}_{t}^{\alpha/2} u \vert^2 dt d\Lambda_d\, \int_{\Lambda_d}^{} \int_{0}^{T} \vert \prescript{}{t}{\mathcal{D}}_{T}^{\alpha/2} v \vert^2\, dt \, \Lambda_d
\nonumber
\\
&\geq&
\bar{\beta} \tilde{\beta}_2 \Vert u \Vert_{{^l}H^{s}(I)} \Vert v \Vert_{{^r}H^{s}(I)},
\end{eqnarray}
where $0<\tilde{\beta}_2\leq 1$ and $0<\bar{\beta}$.

%
\section{Proof of The Stability Theorem \eqref{inf_sup_d_lem}}
\label{Sec: App. proof of stability}
%

\noindent \textit{Part A}: $d=1$. It is evident that $u$ and $v$ are in Hilbert spaces (see \cite{ervin2007variational,Li2010}). For $0< \tilde{\beta} \leq 1$, we have
\begin{eqnarray}
&&\vert a(u,v)\vert 
\nonumber
\\
&&= \vert (\prescript{}{0}{\mathcal{D}}_{t}^{\alpha/2}\, (u),\prescript{}{t}{\mathcal{D}}_{T}^{\alpha/2}\, (v))_{\Omega} + (\prescript{}{a_1}{\mathcal{D}}_{x_1}^{\beta_1/2}\, (u),\prescript{}{x_1}{\mathcal{D}}_{b_1}^{\beta_1/2}\, (v))_{\Omega}+ (\prescript{}{a_1}{\mathcal{D}}_{x_1}^{\beta_1/2}\, (u),\prescript{}{x_1}{\mathcal{D}}_{b_1}^{\beta_1/2}\, (v))_{\Omega}+(u,v)_{\Omega}\vert
\nonumber
\\
&&\geq \tilde{\beta} \Big(\vert (\prescript{}{0}{\mathcal{D}}_{t}^{\alpha/2}\, (u),\prescript{}{t}{\mathcal{D}}_{T}^{\alpha/2}\, (v))_{\Omega}\vert + \vert (\prescript{}{a_1}{\mathcal{D}}_{x_1}^{\beta_1/2}\, (u),\prescript{}{x_1}{\mathcal{D}}_{b_1}^{\beta_1/2}\, (v))_{\Omega}\vert+\vert (\prescript{}{a_1}{\mathcal{D}}_{x_1}^{\beta_1/2}\, (u),\prescript{}{x_1}{\mathcal{D}}_{b_1}^{\beta_1/2}\, (v))_{\Omega}\vert+\vert(u,v)_{\Omega}\vert\Big),
\nonumber
\end{eqnarray}
since $\underset{u \in U}{\sup} \vert a(u , v)\vert>0$. Next, by equivalence of spaces and their associated norms, \eqref{equiv_space}, and \eqref{equiv_space2}, we obtain
\begin{eqnarray}
\vert (\prescript{}{0}{\mathcal{D}}_{t}^{\alpha/2}\, (u),\prescript{}{t}{\mathcal{D}}_{T}^{\alpha/2}\, (v))_{\Omega}\vert &\geq& C_1 \Vert \prescript{}{0}{\mathcal{D}}_{t}^{\alpha/2} u\Vert_{L^2(\Omega)}
\, \Vert\prescript{}{t}{\mathcal{D}}_{T}^{\alpha/2} v\Vert_{L^2(\Omega)},
\nonumber
\\
\vert (\prescript{}{a_1}{\mathcal{D}}_{x_1}^{\beta_1/2}\, (u),\prescript{}{x_1}{\mathcal{D}}_{b_1}^{\beta_1/2}\, (v))_{\Omega} \vert &\geq& C_2 \Vert \prescript{}{a_1}{\mathcal{D}}_{x_1}^{\beta_1/2} u \Vert_{L^2(\Omega)}\, \Vert \prescript{}{x_1}{\mathcal{D}}_{b_1}^{\beta_1/2} v \Vert_{L^2(\Omega)}, \quad
\nonumber
\end{eqnarray}
and
\begin{eqnarray}
\vert (\prescript{}{x_1}{\mathcal{D}}_{b_1}^{\beta_1/2}\, (u),\prescript{}{a_1}{\mathcal{D}}_{x_1}^{\beta_1/2}\, (v))_{\Omega}\vert \geq C_3 \Vert \prescript{}{x_1}{\mathcal{D}}_{b_1}^{\beta_1/2} u \Vert_{L^2(\Omega)} \, \Vert \prescript{}{a_1}{\mathcal{D}}_{x_1}^{\beta_1/2} v \Vert_{L^2(\Omega)},
\end{eqnarray}
where $C_1$, $C_2$, and $C_3$ are positive constants. Therefore,
\begin{eqnarray}
\label{inequality_eq}
\vert a(u,v)\vert &\geq&
\tilde{C} \tilde{\beta} \Big{\{}  \Vert \prescript{}{0}{\mathcal{D}}_{t}^{\alpha/2} u\Vert_{L^2(\Omega)}
\, \Vert\prescript{}{t}{\mathcal{D}}_{T}^{\alpha/2} v\Vert_{L^2(\Omega)} + \Vert \prescript{}{a_1}{\mathcal{D}}_{x_1}^{\beta_1/2} u \Vert_{L^2(\Omega)}\, \Vert \prescript{}{x_1}{\mathcal{D}}_{b_1}^{\beta_1/2} v \Vert_{L^2(\Omega)} 
\nonumber
\\
&& \quad + \Vert \prescript{}{a_1}{\mathcal{D}}_{x_1}^{\beta_1/2} u \Vert_{L^2(\Omega)} \, \Vert \prescript{}{x_1}{\mathcal{D}}_{b_1}^{\beta_1/2} v \Vert_{L^2(\Omega)} \Big{\}},
\end{eqnarray}
where $\tilde{C}$ is $min\{C_1, \, C_2, \, C_3 \}$. Also, the norm $ \Vert u \Vert_{U} \, \Vert v \Vert_{V}$ is equivalent to the right hand side of inequality \eqref{inequality_eq}. Therefore, $\vert a(u,v)\vert \geq C \, \Vert u \Vert_{U}\Vert v \Vert_{V}$.

\vspace{0.2 in}
\noindent \textit{Part B}: $d > 1$. Similarly, we have 
\begin{align}
\label{bilinear_ineq}
\vert a(u,v)\vert \geq \beta \bigg(\vert (\prescript{}{0}{\mathcal{D}}_{t}^{\alpha/2} (u),\prescript{}{t}{\mathcal{D}}_{T}^{\alpha/2} (v))_{\Omega}\vert + \sum_{i=1}^{d} \Big(\vert (\prescript{}{a_i}{\mathcal{D}}_{x_i}^{\beta_i/2} (u),\prescript{}{x_i}{\mathcal{D}}_{b_i}^{\beta_i/2} (v))_{\Omega}\vert+\vert (\prescript{}{a_i}{\mathcal{D}}_{x_i}^{\beta_i/2} (u),\prescript{}{x_i}{\mathcal{D}}_{b_i}^{\beta_i/2} (v))_{\Omega}\vert\Big) \bigg),
\end{align}
where $0< \beta \leq 1$. Recalling that as the direct consequences of \eqref{equiv_space}, we obtain
\begin{align}
\vert (\prescript{}{a_i}{\mathcal{D}}_{x_i}^{\beta_i/2}\, (u),\prescript{}{x_i}{\mathcal{D}}_{b_i}^{\beta_i/2}\, (v))_{\Omega} \vert 
&\equiv
\Vert \prescript{}{a_i}{\mathcal{D}}_{x_i}^{\beta_i/2}\, (u) \Vert_{L^2(\Omega)} \, \Vert \prescript{}{x_i}{\mathcal{D}}_{b_i}^{\beta_i/2}\, (v)\Vert_{L^2(\Omega)}, \quad 
\nonumber
\\
\vert (\prescript{}{x_i}{\mathcal{D}}_{b_i}^{\beta_i/2}\, (u),\prescript{}{a_i}{\mathcal{D}}_{x_i}^{\beta_i/2}\, (v))_{\Omega} \vert 
&\equiv 
\Vert \prescript{}{x_i}{\mathcal{D}}_{b_i}^{\beta_i/2}\, (u) \Vert_{L^2(\Omega)} \, \Vert \prescript{}{a_i}{\mathcal{D}}_{x_i}^{\beta_i/2}\, (v)\Vert_{L^2(\Omega)}.
\nonumber
\end{align}
Thus,
\begin{align}
\label{qqqq}
&\sum_{i=1}^{d} \Big(\vert (\prescript{}{a_i}{\mathcal{D}}_{x_i}^{\beta_i/2}\, (u),\prescript{}{x_i}{\mathcal{D}}_{b_i}^{\beta_i/2}\, (v))_{\Omega} \vert 
+\vert (\prescript{}{x_i}{\mathcal{D}}_{b_i}^{\beta_i/2}\, (u),\prescript{}{a_i}{\mathcal{D}}_{x_i}^{\beta_i/2}\, (v))_{\Omega} \vert \Big),
\\
\nonumber
&\geq \tilde{C}
\sum_{i=1}^{d} \Big(\Vert \prescript{}{a_i}{\mathcal{D}}_{x_i}^{\beta_i/2}\, (u) \Vert_{L^2(\Omega)} \, \Vert \prescript{}{x_i}{\mathcal{D}}_{b_i}^{\beta_i/2}\, (v)\Vert_{L^2(\Omega)} + \Vert \prescript{}{x_i}{\mathcal{D}}_{b_i}^{\beta_i/2}\, (u) \Vert_{L^2(\Omega)} \, \Vert \prescript{}{a_i}{\mathcal{D}}_{x_i}^{\beta_i/2}\, (v)\Vert_{L^2(\Omega)}\Big),
\\
\nonumber
&\geq \tilde{C}_1 \, \tilde{\beta} \sum_{i=1}^{d} \Big(\Vert \prescript{}{a_i}{\mathcal{D}}_{x_i}^{\beta_i/2}\, (u) \Vert_{L^2(\Omega)}  + \Vert \prescript{}{x_i}{\mathcal{D}}_{b_i}^{\beta_i/2}\, (u) \Vert_{L^2(\Omega)} \Big) 
\times \sum_{j=1}^{d} \Big( \Vert \prescript{}{x_j}{\mathcal{D}}_{b_j}^{\nu_j}\, (v)\Vert_{L^2(\Omega)}, + \Vert \prescript{}{a_j}{\mathcal{D}}_{x_j}^{\nu_j}\, (v)\Vert_{L^2(\Omega)}\Big),
\end{align}
for $u,\, v \in  L^2(I; \mathcal{X}_d)$, where $0<\tilde{C}$ and $0<\tilde{\beta}\leq 1$. Furthermore, Lemma \ref{norm_223} yields 
\begin{equation}
\label{qqqq2}
\vert (\prescript{}{0}{\mathcal{D}}_{t}^{\alpha/2}(u),\prescript{}{t}{\mathcal{D}}_{T}^{\alpha/2}(v))_{\Omega} \vert \equiv  \Vert u \Vert_{\prescript{r}{}H^{\alpha/2}(I; L^2(\Lambda_d))} \, \, \Vert v \Vert_{\prescript{l}{}H^{\alpha/2}(I;L^2(\Lambda_d))}.
\end{equation}
Therefore, from \eqref{qqqq} and \eqref{qqqq2} we have
\begin{eqnarray}
\label{thm123}
\vert a(u,v)\vert 
\geq \beta \Big(
\Vert u \Vert_{\prescript{r}{}H^{\alpha/2}(I; L^2(\Lambda_d))} \, \, \Vert v \Vert_{\prescript{l}{}H^{\alpha/2}(I;L^2(\Lambda_d))} + \Vert u \Vert_{L^2(I; \mathcal{X}_d)} \, \Vert v \Vert_{L^2(I; \mathcal{X}_d)}\Big),
\end{eqnarray}
where
\begin{eqnarray}
\label{thm124}
&&\Vert u \Vert_{\prescript{r}{}H^{\alpha/2}(I; L^2(\Lambda_d))} \, \, \Vert v \Vert_{\prescript{l}{}H^{\alpha/2}(I;L^2(\Lambda_d))} + \Vert u \Vert_{L^2(I; \mathcal{X}_d)} \, \Vert v \Vert_{L^2(I; \mathcal{X}_d)}
\nonumber
\\
&&\geq \tilde{C}_2 \Big(\Vert u \Vert_{\prescript{r}{}H^{\alpha/2}(I; L^2(\Lambda_d))} + \Vert u \Vert_{L^2(I; \mathcal{X}_d)} \Big) \Big(\Vert v \Vert_{\prescript{l}{}H^{\alpha/2}(I;L^2(\Lambda_d))}+\Vert v \Vert_{L^2(I; \mathcal{X}_d)}\Big)
\end{eqnarray}
for $u \in U $, $v \in U$ and $0<\tilde{C}_2\leq 1$. By considering \eqref{thm123} and \eqref{thm124}, we get
\begin{equation}
\label{inequality_eq3}
\vert a(u,v)\vert \geq C \, \Vert u \Vert_{U}\Vert v \Vert_{V}.
\end{equation}


\newpage
\bibliographystyle{siam}
\bibliography{RFSLP_Refs3}

\end{document}